\numberwithin{equation}{section}
\definecolor{usuai}{RGB}{40, 148, 200}
\newcommand{\setr}{\mathbb{R}}
\newcommand{\setc}{\mathbb{C}}
\newcommand{\id}{\operatorname{Id}}
\newcommand{\bigo}{\mathcal{O}}
\newcommand{\fscinf}{C^\infty}
\newcommand{\ang}[1]{\left<#1\right>}
\newcommand{\cur}[1]{\left\{#1\right\}}
\newcommand{\brac}[1]{\left(#1\right)}
\newcommand{\Laplace}{\Delta}
\newcommand{\pd}{\partial}
\newcommand{\op}{\operatorname{Op}}
\newcommand{\oph}{\operatorname{Op}_h}
\newcommand{\cre}{\operatorname{Re}}
\newcommand{\abs}[1]{\lvert#1\rvert}
\begin{document}

\title[Exponential decay for damped Klein-Gordon equations]{Exponential decay for damped Klein-Gordon equations on asymptotically cylindrical and conic manifolds}

\author{Ruoyu P. T. Wang}
\address{Department of Mathematics, Northwestern University, 2033 Sheridan Road, Evanston, Illinois 60208, USA}

\email{rptwang@math.northwestern.edu}
% \subjclass{35L05}

% \date{\today}

% \keywords{damped wave, damped Klein-Gordon, exponential decay, non-compact manifold, Carleman estimate}

\begin{abstract}
We study the decay of the global energy for the damped Klein-Gordon equation on non-compact manifolds with finitely many cylindrical and subconic ends up to bounded perturbation. We prove that under the Geometric Control Condition, the decay is exponential, and that under the weaker Network Control Condition, the decay is logarithmic, by developing the global Carleman estimate with multiple weights. 
\end{abstract}

\maketitle
\section{Introduction}
In this paper we study the decay of the global energy for the damped Klein-Gordon equation \eqref{S1L5}, on non-compact manifolds with finitely many ends of a wide class up to bounded perturbation, described in \eqref{SS12}, including asymptotically cylindrical and conic ends. We prove in Theorem \ref{LT1} that under the Geometric Control Condition given by Definition \ref{S1T2}, in which the average of damping along each geodesic is uniformly bounded from below, the global energy decays exponentially. We prove in Theorem \ref{LT2}, that under the Network Control Condition given by Definition \ref{S1T1}, in which each point in the space is within some uniform distance from the sufficient damped region, the global energy decays logarithmically. These results generalise those in \cite{bj16}. The main new tool is the Carleman estimates with multiple weights in Theorem \ref{S2T2}. 
\subsection{Geometric setting}\label{SS12}
Consider the model manifold $(M,g_0)$, a non-compact connected $d$-dimensional manifold without boundary, with $N$ infinite ends,
\begin{equation}\label{S1L9}
M=\overline{M_0}\cup \left(\bigcup_{k=1}^NM_k\right),
\end{equation}
where $\overline{M_0}$ is a compact, connected manifold with boundary $\pd \overline{M_0}=\bigsqcup_{k=1}^N \{1\}\times \pd M_k$. Denote the interior of $\overline{M_0}$ by $M_0$. Each end $M_k$ is identified as a cylinder $(1,\infty)_r\times\pd M_k$ endowed with a product metric
\begin{equation}\label{S1L3}
dr^2+\theta^2_k(r) h_k,
\end{equation}
where $\pd M_k$ is a $(d-1)$-dimensional compact manifold without boundary, $h_k$ a smooth metric on $\pd M_k$. The scaling functions $\theta_k\in C^\infty([1,\infty); \mathbb{R}_{> 0})$ satisfy either one of the following conditions:
\begin{gather}\label{S1L1}
\lim_{r\rightarrow \infty}\theta_k(r)=\infty,\quad\abs{\pd_r^m \theta_{k}}\le C_m<\infty, \forall m\ge 1; \textrm{or}\\\label{S1L2}
\theta_{k}\equiv 1.
\end{gather}
We call ends with the scaling functions $\theta_k$ in \eqref{S1L1} sub-conic ends, and ends with those in \eqref{S1L2} cylindrical ends. Specifically, sub-conic ends with the scaling function $\theta_k(r)=r$ are called conic ends. 

In this paper, we specify and work with bounded perturbations of the model metric \eqref{S1L3}. Specify a manifold of bounded geometry $(M, g)$ that is a bounded perturbation of our model manifold $(M,g_0)$, in the sense that both the identity map 
\begin{equation}\label{S1LL10}
\Phi_0: (M, g)\rightarrow (M, g_0); \quad p\mapsto p
\end{equation}
and its inverse
$\Phi_0^{-1}: (M, g_0)\rightarrow (M, g)$ are $C^\infty_b$-maps between two manifolds. See Appendix \ref{SA} for the definition of boundedness on manifolds of bounded geometry. Note that $d\Phi_0(p)$ is uniformly bounded at each $p\in (M, g)$, from both above and below as a map from $T_pM$ equipped with $g$ to $T_pM$ equipped with $g_0$. 

We inexhaustively list some examples that are compatible with our setting:
\begin{example}\label{S1LL15}
\begin{enumerate}[wide]
\item \label{S1L6}Euclidean spaces $\setr^d$, with $M_0$ being the unit open ball and $M_1$ being the rest of $\setr^d$ as $[1,\infty)_r\times\mathbb{S}^{d-1}$ in spherical coordinates. This is a conic end with $\theta_1(r)=r$.
\item \label{S1L7}Euclidean spaces $\setr^d$ as above, but endowed with a bounded perturbed metric, whose local matrix form in the canonical Euclidean coordinates, $g(x)$, and its inverse $g^{-1}(x)$, are smooth matrix-valued functions of which components are $C^\infty_b(\setr^d)$. 
\item Asymptotically conic manifolds, also as known as Riemannian scattering spaces, of finitely many ends of the form $M_k=[0,1]_x\times\pd M_k$ endowed with scattering metrics $x^{-4}dx^2+x^{-2}h_k$. Here $h_k$'s are smooth symmetric 2-cotensors on $M_k$ whose restriction to $\pd M_k$ is positive-definite. In our model we realise the metric as a bounded perturbation of $x^{-4}dx^2+x^{-2}h_k'=dr^2+r^2h_k'$ where $r=x^{-1}$ and $h_k'$'s are metrics on $\pd M_k$ independent of $r$. See \cite{mel95} for further details. 
\item \label{S1L8}Product cylinders of the form $(-\infty, \infty)\times\pd M$ where $\pd M$ is a closed manifold, by taking $M_0=(-1,1)\times\pd M$, $[1,\infty)\times M_1=\pd M$ and $M_2=(-\infty,-1]\times \pd M$. Or an one-ended cylinder glued to some closed manifold. More generally, asymptotically cylindrical manifolds also work with our setting. Those are manifolds with finitely many ends of the form $[0,1]_x\times\pd M_k$ endowed with $x^{-2}dx^2+h$. Here again $h$ is a smooth symmetric 2-cotensor on $M_k$ whose restriction to $\pd M_k$ is positive-definite. See \cite{mel95} for further details.  
\item \label{S1L4} Elliptic paraboloid, $\{(x,y,z):z=x^2+y^2\}\subset \setr^3$ with $M_0$ being the tip $\{z\le 1\}$ and $M_1$ be the rest of paraboloid as $[1,\infty)_r\times \mathbb{S}^1_\theta$ equipped with metric $(1+r^{-1}/4)dr^2+rd\theta^2$, under the change of coordinates $(x,y,z)=(r^{1/2}\cos{\theta}, r^{1/2}\sin{\theta}, r)$. Here this metric on $M_1$ is a bounded perturbation of $dr^2+rd\theta^2$, whose scaling function is $\theta_1(r)=r^{1/2}$, which is sub-conic. 
\item Boundedly perturbed cylinders. Consider the surface $\{(x,y,z): (2+\cos{z})^2=x^2+y^2\}\subset \setr^3$, which is realised as the bounded perturbation to $(-\infty, \infty)_r\times \mathbb{S}^1_\theta$ equipped with metric $dr^2+(2+\cos{r})d\theta^2$. Note that, this surface is not an asymptotically cylindrical manifold, as the $\cos{r}$ cannot be well-defined at the spatial infinity $r=\infty$. But we could still cope with this manifold as a bounded perturbation of the product cylinder. 
\item Any connected sum along balls of finitely many equidimensional ends of the types above.
\end{enumerate}
\end{example}
In this paper, we prove that in the geometric settings as above, one has exponential or logarithmic decays of the global energy for the damped Klein-Gordon equations by assuming suitable dynamical control conditions. It is also noted that, hyperbolic manifolds do not fit our analysis. 

\subsection{Damped Klein-Gordon equations}
Consider a damping function $a\in C_b^\infty(M)$, a smooth function on $M$ whose derivatives of all orders are bounded by uniform constants dependent only on the order. The damped Klein-Gordon equation on our manifold $(M,g)$ reads
\begin{equation}\label{S1L5}
	\begin{cases}
	\brac{\Laplace_{g}+\id+\pd_t^2+a\pd_t}u(t,x)=0, \quad\textrm{on }\setr_{t\ge 0}\times M_x\\
	u(0, x)=u_0(x)\in H^{2}(M), \quad \pd_t u(0, x)=u_1(x)\in H^{1}(M)
 \end{cases},
\end{equation}
where $\Delta_g$ is the positive Laplace-Beltrami operator on $(M,g)$. Consider
\begin{equation}
A=\begin{pmatrix}
0 & \id\\
-\left(\Delta_g+\id\right) & -a(x)
\end{pmatrix},
\end{equation}
which is a bounded linear operator from $D(A)=H^2(M)\times H^1(M)$ to $X=H^1(M)\times L^2(M)$, which is further dissipative in the sense that
\begin{equation}
\cre{\left\langle A\left(u, v\right), (u,v)\right\rangle}_X=-\int_M a(x)\abs{v(x)}^2~dg\le 0.
\end{equation}
By noting that $D(A)$ is dense in $X$, $A$ is a bounded dissipative operator on Hilbert space $X$, and the Lumer-Phillips theorem tells us $A$ generates a strongly continuous semigroup $e^{tA}$ on $X$ that is further a contraction semigroup, in the sense that $\|e^{tA}\|_{X\rightarrow X}\le 1$ for each $t\ge 0$. Note we can formulate the equation \eqref{S1L5} as a Cauchy problem for $U(t, x)=(u(t,x), \pd_t u(t,x))$, that is
\begin{equation}
\pd_t U(t,x)=A(x)U(t,x),\quad U(0,x)=U_0(x)=(u_0(x), u_1(x))\in X,
\end{equation}
and the strongly continuous semigroup $e^{tA}$ is the solution operator to the Cauchy problem, where the unique solution is $e^{tA}U_0$. As we look into how fast the global energy decays, it suffices to look at the decay of the operator norm of the semigroup $e^{tA}$. Indeed, the energy of the solution to \eqref{S1L5} is
\begin{equation}
E(u, t)=\frac12\int_M \abs{\nabla_x u(t,x)}^2+\abs{\pd_t u(t,x)}^2~dx\le\frac12\left\|e^{tA}\right\|_{X\rightarrow X}\left\|(u_0, u_1)\right\|_X.
\end{equation}
The semigroup $e^{tA}$ weakly decays to 0 when the damping $a$ is smooth and not zero somewhere on $M$, as in \cite{wal77}. In this paper, we are interested in two types of decays of the semigroup $e^{tA}$: the exponential decay, which is the fastest decay one expects in the contexts of a smooth and bounded damping, and the logarithmic decay, which is a kind of non-uniform decay under some weak dynamical hypotheses. 

About the damped Klein-Gordon equation and the damped wave equation, there have been many results known when $M$ is compact and the damping is smooth. It is known that the exponential decay of the semigroup is equivalent to the Geometric Control Condition, which is a dynamical hypothesis that all trajectories of the Hamiltonian flow intersect the support of the damping $a(x)$, as in \cite{rt74,blr88,blr92,bg97}. In \cite{leb93} it was shown that there is a logarithmic decay as long as the damping is non-trivial. It is also noted that other non-uniform stability properties have been actively investigated, as in, inexhaustively listed here, \cite{aln14,bh07,csvw14,bc15}. 

However the picture is less complete for exponential results on non-compact manifolds without boundary. The fundamental result in \cite{bj16} generalises the Geometric Control Condition to $\setr^d$, with a uniform lower bound of the average of damping along the Hamiltonian flow. It was also shown that the Geometric Control Condition gives exponential decay of the semigroup, and that there is logarithmic decay when another dynamical hypothesis, called the Network Control Condition, is imposed. In \cite{wun17} a polynomial decay was shown via Schr\"{o}dinger observability for a periodic damping on $\setr^d$ under no further dynamical assumptions. In \cite{jr18}, the sharp polynomial global energy decay for the damped wave equation on $\setr^d$ with an asymptotically periodic damping was shown. In \cite{roy18a} the results of \cite{bj16,wun17} were extended to highly oscillatory periodic dampings. See also, inexhaustively listed here, \cite{mr18,ms18,gjm19,gre19,cpsst19} for recent development on Euclidean spaces. 

The purpose of this paper is to extend the results of \cite{bj16} to a wider class of open manifolds, namely $(M, g)$ prespecified in \eqref{S1LL10}. In \cite{bj16}, the results have been shown for the Euclidean cases \eqref{S1L6}, \eqref{S1L7} of Example \ref{S1LL15}. The possibility of proving such results on product cylinders as in \eqref{S1L8} was also hinted. Our paper generalises their results to manifolds with cylindrical and sub-conic ends. Here we define the Geometric Control Condition on the prespecified manifold $(M,g)$: 
\begin{definition}[Geometric Control Condition]\label{S1T2}
We say the damping $a$ satisfies the Geometric Control Condition $(T,\alpha)$ on $(M, g)$, for $T, \alpha>0$, if for $(x,\xi)\in \Sigma$, where $\Sigma=\{(x,\xi)\in T^*M: \abs{\xi}^2=1\}$, one has
\begin{equation}
\ang{a}_T(x,\xi)=\frac{1}T\int_0^T \left(\left(\Pi_x\circ\varphi_t\right)^*a\right)(x,\xi)~dt\ge \alpha>0,
\end{equation}
where $\varphi_t$ is the Hamiltonian flow associated with $\abs{\xi}_{g}^2$, and $\Pi_x$ is the projection from fibres of $T^*M$ to the base variable. 
\end{definition}
We claim the first main result that Geometric Control Condition gives exponential decay of the semigroup $e^{tA}$: 
\begin{theorem}[Exponential decay of energy]\label{LT1}
Assume $a\in \fscinf_b(M)$ where $a\ge 0$ everywhere, satisfies the Geometric Control Condition $(T,\alpha)$, then the semigroup $e^{tA}$ decays exponentially in the sense that
$\|e^{tA}\|_{X\rightarrow X}\le Me^{-\lambda t}$, for each $t\ge 0$, for some $M, \lambda>0$. It is then implied that the solution $u$ to the damped Klein-Gordon equation with initial datum $(u_0, u_1)\in H^{2}(M)\times H^1(M)$, 
\begin{equation}
	\begin{cases}
	\brac{\Laplace_{g}+\id+\pd_t^2+a\pd_t}u(t,x)=0, \quad\textrm{on }\setr_{t\ge 0}\times M_x\\
	u(0, x)=u_0(x)\in H^{2}(M), \quad \pd_t u(0, x)=u_1(x)\in H^{1}(M)
 \end{cases}.
\end{equation}
decays exponentially, in the sense that there exists $C, \lambda> 0$, 
\begin{equation}
E(u, t)\le \frac12 Ce^{-\lambda t}\left(\left\|u_0\right\|_{H^1(M)}^2+\left\|u_1\right\|_{L^2(M)}^2\right)^{1/2}.
\end{equation}
\end{theorem}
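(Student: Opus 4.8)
The plan is to reduce the exponential decay of $e^{tA}$ on $X = H^1(M) \times L^2(M)$ to a uniform resolvent estimate on the imaginary axis, via the Gearhart--Prüss--Huang theorem for semigroups on Hilbert spaces. Since $A$ generates a contraction semigroup, $e^{tA}$ decays exponentially if and only if $i\setr \subset \rho(A)$ and $\sup_{s \in \setr} \|(A - is)^{-1}\|_{X \to X} < \infty$. So the whole theorem reduces to establishing this resolvent bound, which in turn will come from the stationary (Helmholtz-type) estimate: there exist $C, s_0 > 0$ such that for all $|s| \ge s_0$ and all $u \in H^2(M)$,
\begin{equation*}
\|u\|_{H^1(M)} + |s|\,\|u\|_{L^2(M)} \le C\,\big\| \brac{\Laplace_g + \id - s^2 - i s a} u \big\|_{L^2(M)} + C \big\| \brac{\Laplace_g + \id - s^2 - i s a} u \big\|_{H^{-1}(M)},
\end{equation*}
together with a separate (and easier) treatment of the compact range $|s| \le s_0$, where one uses that there is no eigenvalue of $A$ on $i\setr$: if $A U = is U$ with $U \neq 0$, the dissipation identity forces $a^{1/2} v \equiv 0$, i.e. $v$ vanishes on $\{a > 0\}$, then unique continuation for the stationary Klein--Gordon operator (valid on $(M,g)$ since the coefficients are smooth and $\{a>0\}$ is open) forces $U \equiv 0$, a contradiction; combined with analytic Fredholm-type perturbation arguments and a limiting-absorption-type control near $s = 0$ (here the $+\id$ mass term is crucial — it makes $\Laplace_g + \id$ invertible and removes the low-frequency difficulty that plagues the massless damped wave equation).

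The heart of the matter — and the step I expect to be the main obstacle — is the high-frequency stationary estimate, and this is exactly where the new tool, the global Carleman estimate with multiple weights (Theorem \ref{S2T2}), enters. The strategy is the standard semiclassical one with $h = |s|^{-1}$: one argues by contradiction, taking a sequence $u_j$ with $\|u_j\|$ normalized but $\|P_{s_j} u_j\| \to 0$, extracts a semiclassical defect (Wigner) measure $\mu$ on $\Sigma$, shows $\mu$ is invariant under the Hamiltonian flow $\varphi_t$ of $|\xi|_g^2$ and is annihilated on $\{a > 0\}$ (from the dissipation), hence by the Geometric Control Condition $\ang{a}_T \ge \alpha > 0$ one gets $\mu \equiv 0$ — provided no mass escapes to spatial infinity along the ends $M_k$. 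Ruling out this escape of mass is precisely the role of the Carleman estimate: on each cylindrical or sub-conic end one builds a weight $\vp_k$ (adapted to $\theta_k$) that is convex along the radial variable $r$, so that the conjugated operator $e^{\vp_k/h} P_s e^{-\vp_k/h}$ satisfies a subelliptic/positive-commutator bound forcing exponential smallness of $u_j$ near $r = \infty$; patching the finitely many weights (one per end, plus the interior) is where "multiple weights" matters and where the bounded-geometry hypothesis and the bounds $|\pd_r^m \theta_k| \le C_m$ are used to keep all constants uniform.

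Concretely the steps are: (1) state Gearhart--Prüss and reduce to the resolvent bound on $i\setr$; (2) dispatch $|s| \le s_0$ by the no-eigenvalue argument plus unique continuation and a perturbation/continuity argument using ellipticity of $\Laplace_g + \id$; (3) for $|s| \ge s_0$, set up the semiclassical rescaling and invoke Theorem \ref{S2T2} to get a priori exponential decay of solutions on the ends, thereby confining the problem to a region where, after cutting off, the geometry is effectively "of bounded geometry with no escape"; (4) run the defect-measure / propagation argument in that confined region, using the dissipation to kill $\mu$ on $\{a>0\}$, the flow-invariance of $\mu$, and the Geometric Control Condition $(T,\alpha)$ to conclude $\mu \equiv 0$, contradicting normalization; (5) assemble (2)–(4) into the uniform resolvent bound and conclude via (1). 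Finally, translate the semigroup decay $\|e^{tA}\|_{X\to X} \le M e^{-\lambda t}$ into the stated energy decay by the inequality $E(u,t) \le \tfrac12 \|e^{tA}\|_{X \to X}\, \|(u_0,u_1)\|_X$ already recorded in the introduction. The one genuinely delicate point beyond bookkeeping is ensuring that all constants in the Carleman and propagation estimates are uniform in $s$ and do not degenerate as $r \to \infty$ on the sub-conic ends where $\theta_k \to \infty$; this is handled by choosing the Carleman weights so that their radial convexity dominates the (bounded-derivative) growth of $\theta_k$.
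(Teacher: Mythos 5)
Your overall frame — the Gearhart--Pr\"uss--Huang reduction, arguing by contradiction with normalized quasimode sequences, splitting into bounded and unbounded frequencies, and the final translation into energy decay — matches the paper. But the roles you assign to the two main tools are inverted, and each inversion is a genuine gap. At high frequency the Carleman estimate of Theorem \ref{S2T2} cannot play the role you give it (ruling out escape of mass / forcing exponential smallness near $r=\infty$): it carries an $e^{C/h}$ loss, and the weights actually constructed in Section 3 are bounded functions taking values in $[0,3]$ whose critical points are moved into the damped balls — they are not radially convex weights producing decay along the ends. Feeding a quasimode with $\|P_h u_h\|_{L^2}=o(h)$ into an estimate with an $e^{C/h}$ constant yields no contradiction at all; that loss is precisely why the Carleman route only produces the logarithmic decay of Theorem \ref{LT2}. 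For the exponential rate one needs the lossless resolvent bound $\|P_h^{-1}\|_{L^2\to L^2}\le C/h$, which the paper gets by a direct positive-commutator argument: take $b=e^{c(x,\xi)}$ with $c$ the iterated time average of $a$ along the Hamiltonian flow, so that $2ab+\{|\xi|^2,b\}\ge\alpha$ on $\Sigma$ by the Geometric Control Condition, microlocalize to $\{|\xi|^2=1\}$ by ellipticity off the characteristic set, and apply the uniform G\r{a}rding inequality (Proposition \ref{A1T1}). The uniformity of the GCC and of the bounded-geometry calculus is what handles spatial infinity; no defect measures and no confinement step are needed, and your proposed confinement step would fail.

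Conversely, your treatment of the bounded-frequency regime is too weak on a non-compact manifold. Absence of eigenvalues on $i\setr$ via unique continuation plus ``analytic Fredholm-type perturbation'' does not yield a uniform resolvent bound on compact $\mu$-intervals here: for $\mu^2>1$ the operator $\Delta_g+\id-\mu^2-i\mu a$ is not Fredholm (continuous spectrum), so there can be quasimode sequences escaping to spatial infinity along the ends even though no eigenvalue exists, and unique continuation says nothing about them. This is exactly where the multi-weight Carleman estimate is actually used in the paper: since GCC implies the Network Control Condition (Remark \ref{S4T1}), Corollary \ref{S3T7} gives $\|u_n\|_{L^2(M)}\le C\left(\|P_\mu u_n\|_{L^2(M)}+\|u_n\|_{L^2(\Omega_\beta)}\right)$ at fixed $\mu$ (where the $e^{C/h}$ constant is harmless because $h$ is fixed), and combined with $\|\sqrt{a}\,u_n\|_{L^2}=o(1)$ this gives the low-frequency contradiction. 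So your architecture can be repaired, but only by swapping the tools: Carleman at low frequency, and the averaging/commutator argument — not Carleman plus defect measures — at high frequency.
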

The decay of the global energy for the damped Klein-Gordon equation is determined by how fast the high frequency waves and the low frequency waves decay. The energy of the high frequency waves semiclassically concentrates near the Hamiltonian flow. This phenomenon hints at why the Geometric Control Condition plays an important role here. On the other hand, the low frequency waves do not concentrate. But as a result of their long wavelengths, they can see the damping from a distance even if many trajectories do not encounter the damping. However, the sparser the damping is, the weaker the decay gets. Therefore to obtain a uniform rate of decay we do not want to be too far away from the damping. This inspires the following dynamical hypothesis. 
\begin{definition}[Network Control Condition]\label{S1T1}
We say the damping $a$ satisfies the Network Control Condition $(L, \omega, 2\beta, \{x_n\})$ on $M$, for $L, \omega, 2\beta>0$, and $\{x_n\}$ a set of points on $M$, if at each $x\in M$,
\begin{equation}
d(x, \bigcup_n \left\{x_n\right\})\le L,
\end{equation}
and $a(x)\ge 2\beta>0$ on $\bigcup_n B(x_n,\omega)$. 
\end{definition}
This hypothesis has been introduced in \cite{bj16}, and gives logarithmic decay on $\setr^d$. The logarithmic decay on compact manifolds has also been considered in \cite{leb93,lr97}. Here is our second result: 
\begin{theorem}[Logarithmic decay of energy]\label{LT2}
Assume $a\in \fscinf_b(M)$ where $a\ge 0$ everywhere, satisfies the Network Control Condition $(L, r, 2\beta, \{x_n\})$, then for each $k\ge 1$, the solution $u$ to the damped Klein-Gordon equation with initial datum $(u_0, u_1)\in H^{k+1}(M)\times H^k(M)$, 
\begin{equation}
	\begin{cases}
	\brac{\Laplace_{g}+\id+\pd_t^2+a\pd_t}u=0, \quad\textrm{on }\setr_{t\ge 0}\times M_x\\
	u|_{t=0}=u_0\in H^{k+1}(M), \quad \pd_t u|_{t=0}=u_1\in H^{k}(M)
 \end{cases}
\end{equation}
decays logarithmically, in the sense that there exists $C_k> 0$, 
\begin{equation}\label{S1LL16}
E(u)=\left(\left\|\nabla_g u(t)\right\|^2+\left\|\pd_t u(t)\right\|^2\right)^{\frac12}\le \frac{C_k}{\log\left(2+t\right)^k}\left\|\left(u_0, u_1\right)\right\|_{H^{k+1}\times H^k}.
\end{equation}
\end{theorem}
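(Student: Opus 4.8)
The plan is to deduce the logarithmic energy decay from a resolvent estimate for $A$ on the imaginary axis, the resolvent estimate being the payoff of the global Carleman estimate with multiple weights of Theorem \ref{S2T2}. Since $e^{tA}$ is a contraction semigroup it is bounded, so by the abstract characterisation of logarithmic decay for bounded $C_0$-semigroups (Batty--Duyckaerts) it suffices to prove that $i\setr\subset\rho(A)$ and that
\begin{equation}\label{eq:LT2res}
\nor{\brac{i\mu-A}^{-1}}_{X\to X}\le Ce^{C\abs{\mu}},\qquad\mu\in\setr.
\end{equation}
Granting \eqref{eq:LT2res} one gets $\nor{e^{tA}\brac{\id-A}^{-k}}_{X\to X}\le C_k\log(2+t)^{-k}$ for each $k\ge1$; since $D(A^k)=H^{k+1}(M)\times H^k(M)$ with equivalent norms and $E(u)\le\nor{e^{tA}(u_0,u_1)}_X$, factoring $(u_0,u_1)=\brac{\id-A}^{-k}\brac{\id-A}^k(u_0,u_1)$ yields \eqref{S1LL16}. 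To produce \eqref{eq:LT2res} I would pass to the stationary problem: $\brac{i\mu-A}(u,v)=(f_1,f_2)$ is equivalent to $v=i\mu u-f_1$ and
\begin{equation}
P_\mu u:=\brac{\Laplace_g+\id-\mu^2+i\mu a}u=f_2+(i\mu+a)f_1,
\end{equation}
with the $X$-norm of $(u,v)$ controlled by $(1+\abs{\mu})\sobnor{u}{1}+\nor{(f_1,f_2)}_X$; thus \eqref{eq:LT2res} and $i\setr\subset\rho(A)$ follow from the stationary estimate
\begin{equation}\label{eq:LT2stat}
\sobnor{u}{1}\le Ce^{C\abs{\mu}}\nor{P_\mu u}_{L^2},\qquad\mu\in\setr,
\end{equation}
on the understanding that, $M$ being non-compact, this a priori bound must be paired with a weighted-at-infinity companion norm in which $P_\mu$ is Fredholm, so that \eqref{eq:LT2stat} upgrades to genuine invertibility. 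I would then treat $\abs{\mu}$ large and $\abs{\mu}$ bounded separately.

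\textbf{The core step, $\abs{\mu}$ large.} Pairing $P_\mu u$ against $u$ and taking imaginary parts gives the damping identity $\abs{\mu}\int_M a\abs{u}^2\le\nor{P_\mu u}_{L^2}\nor{u}_{L^2}$, and since $a\ge2\beta$ on $\bigcup_n B(x_n,\omega)$ this controls $u$ on the damped network:
\begin{equation}\label{eq:LT2damp}
\nor{u}^2_{L^2\brac{\bigcup_n B(x_n,\omega)}}\le\frac{1}{2\beta\abs{\mu}}\nor{P_\mu u}_{L^2}\sobnor{u}{1}.
\end{equation}
Next I would apply Theorem \ref{S2T2} to $u$ with a large parameter $\tau\asymp\abs{\mu}$ and a weight $\vp$ assembled from several pieces: one piece on each end, built from the radial coordinate $r$ and tuned to whether the end is cylindrical ($\theta_k\equiv1$) or sub-conic ($\theta_k\to\infty$), so as to be pseudoconvex for $P_\mu$ out to spatial infinity on that end; together with finitely many interior pieces whose critical points lie inside the balls $B(x_n,\omega)$. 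This patchwork is exactly what ``multiple weights'' refers to --- no single weight can be pseudoconvex simultaneously for all of the differing end geometries --- and the Network Control Condition, namely that every point of $M$ lies within distance $L$ of some $x_n$ with $\omega$ and $\beta$ held fixed, is what makes such a family choosable with uniformly bounded geometry and uniform constants. The outcome has the shape
\begin{equation}
\sobnor{u}{1}\le Ce^{C\abs{\mu}}\nor{P_\mu u}_{L^2}+Ce^{C\abs{\mu}}\nor{u}_{L^2\brac{\bigcup_n B(x_n,\omega)}}.
\end{equation}
Substituting \eqref{eq:LT2damp}, absorbing a fixed fraction of $\sobnor{u}{1}$ by Young's inequality, and using $\abs{\mu}\ge\mu_0$ to absorb the residual $\nor{u}_{L^2}$ terms, one obtains \eqref{eq:LT2stat} for $\abs{\mu}\ge\mu_0$.

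\textbf{Bounded $\abs{\mu}$ and conclusion.} For $\abs{\mu}<1$ the Klein--Gordon mass term renders $\Laplace_g+\id-\mu^2=\Laplace_g+(1-\mu^2)$ strictly positive, so $P_\mu$ is invertible with a fixed bound. For $1\le\abs{\mu}\le\mu_0$ one shows $\ker P_\mu=0$: the damping identity forces $\sqrt a\,u\equiv0$, so $u$ vanishes on the open set $\bigcup_n B(x_n,\omega)$ and solves the second-order elliptic equation $\brac{\Laplace_g+\id-\mu^2}u=0$ on all of $M$; unique continuation and the connectedness of $M$ then give $u\equiv0$, and a Fredholm/perturbation argument upgrades the absence of kernel and of embedded eigenvalues to a uniform resolvent bound over the compact $\mu$-range. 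Combining the two regimes yields \eqref{eq:LT2res}, hence the theorem.

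\textbf{Main obstacle.} The genuine difficulty lies in Theorem \ref{S2T2} as used in the core step: building one global Carleman weight from many local ones that is pseudoconvex for $P_\mu$ on every end despite the ends carrying genuinely different asymptotic geometries, whose only critical points sit in the controlled balls $B(x_n,\omega)$, and all of whose constants are uniform over the --- by the Network Control Condition uniformly distributed --- collection of ends and weight-pieces. The sub-conic ends form the delicate case: there the conformal factor $\theta_k(r)\to\infty$ contributes an effective potential that threatens pseudoconvexity near infinity and must be dominated by a careful choice of weight.
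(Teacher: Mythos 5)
Your overall route coincides with the paper's: reduce the theorem to the exponential-in-$\abs{\mu}$ resolvent bound $\nor{(A-i\mu)^{-1}}_{X\to X}\le Ce^{C\abs{\mu}}$ via an abstract semigroup result (the paper uses Burq's Theorem \ref{S5T1}; Batty--Duyckaerts serves the same purpose), and obtain that bound from the stationary operator $P_\mu$ by combining the multiple-weight Carleman estimate with the damping identity on the network balls, finally trading $(\id-A)^{-k}$ against $H^{k+1}\times H^k$ regularity. Your high-frequency step is essentially a direct (non-contradiction) version of the paper's Step 3 and works, modulo the routine upgrade from the purely $L^2$ bound of Corollary \ref{S3T6} to the $H^1$ bound via the real part of $\left\langle P_\mu u,u\right\rangle$; and describing the weights as ``pseudoconvex'' is only a cosmetic mismatch with the paper's actual compatibility condition (a uniform lower bound on $\abs{\nabla_g\psi_k}$ off the damped set plus the gap condition, convexity being supplied by $e^{\lambda\psi_k}$).

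There is, however, a genuine gap in your treatment of the bounded-frequency regime $1\le\abs{\mu}\le\mu_0$ and of the passage from the a priori estimate to invertibility. On this non-compact $M$ the essential spectrum of $\Delta_g+\id$ contains $[1,\infty)$ (cylindrical and subconic ends), so for $\abs{\mu}\ge1$ the operator $\Delta_g+\id-\mu^2$ is not Fredholm on $L^2$, and $i\mu a$ is not a relatively compact perturbation, since under the Network Control Condition $a$ does not decay at infinity. Consequently ``trivial kernel by unique continuation plus a Fredholm/perturbation argument'' cannot yield a uniform resolvent bound on the compact $\mu$-range: the failure mode is not an $L^2$ eigenfunction but a quasimode sequence whose mass escapes along the ends or lives in the gaps between the damping balls, and excluding it is exactly a quantitative statement. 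The paper handles every bounded $\mu$ with the same Carleman machinery run at a fixed small $h$, namely Corollary \ref{S3T7}, $\nor{u}_{L^2(M)}\le C\brac{\nor{P_\mu u}_{L^2(M)}+\nor{u}_{L^2(\Omega_\beta)}}$, fed into a quasimode contradiction argument; combined with maximal dissipativity of $A$ (any imaginary spectral point would be approximate point spectrum), this gives both $i\setr\cap\sigma(A)=\emptyset$ and the bound, with no weighted-at-infinity Fredholm framework --- which you invoke but never construct --- needed at all. As written, your intermediate step fails and must be replaced by this quantitative low-frequency estimate; a smaller bookkeeping point is that your small-$\mu$ bound degenerates like $(1-\mu^2)^{-1}$ as $\abs{\mu}\to1$, so the intermediate regime must in any case begin strictly below $1$.
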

Though the idea of the proof is similar to that of \cite{bj16}, we need new tools because of we are leaving $\setr^d$. In \cite{bj16}, they used the fact that $\prod_{i=1}^d\cos(\pi x_i)\in C^\infty_b(\setr^d)$ has critical points exactly at $\mathbb{Z}^d\subset\setr^d$. On our $(M, g)$, neither the function nor the $\mathbb{Z}^d$-structure remains. We manage to get this fixed on cylindrical ends, but it remains unfixable on those sub-conic ends. 

To counter such difficulty in dealing with the subconic ends, we develop a novel Carleman estimate using a finite family of weight functions on manifolds of bounded geometry without boundary. The idea is based on that of two-weight Carleman estimates in bounded domain developed in \cite{bur98}. The new estimate allows us to construct on each end a finite family of Carleman weights, possibly very degenerate or even identically a constant somewhere, to cover the whole manifold and to give a global Carleman estimate. To our knowledge, this global Carleman estimate using finitely many weight functions has not been employed previously. This Carleman estimate with multiple weights might be interesting on its own for other applications. 

We note here that the regularity assumptions upon the damping $a$ in these two theorems can be weakened. In Theorem \ref{LT1} we only need $a\in L^\infty(M)$ to be uniformly continuous, and in Theorem \ref{LT2} we only need $a\in L^\infty(M)$. We choose not to develop those improvements here but they follow from the strategy described in \cite{bj16}. 

We organise our paper in the following order: in Section 2, we introduce our Carleman estimate with multiple weights; in Section 3, we show there exists a family of Carleman weight functions on our prespecified manifold $(M, g)$ compatible with the Carleman estimate developed in Section 2; in Section 4, we finish the proof of Theorem \ref{LT1} concluding the exponential decay; in Section 5, we finish the proof of Theorem \ref{LT2} concluding the logarithmic decay. An appendix on analysis on manifolds of bounded geometry is attached at the end of the paper. 

\subsection{Acknowledgement}The author is grateful to Jared Wunsch for numerous discussions around these results as well as many valuable comments on the manuscript, and Nicolas Burq for helpful discussion and pointing out the possibility to use the two-weight Carleman estimate, and Jeffrey Rauch and Jacob Shapiro for their insightful comments. The author is grateful to two anonymous referees for kindly reading this manuscript and providing many valuable remarks.

% \begin{reptheorem}{191007L8}
% That theorem again
% \end{reptheorem}

\section{Carleman estimates with multiple weights}
Let $M$ be a manifold of bounded geometry, without boundary. See Appendix \ref{SA} for further details. Let $\Omega\subset M$ be an open set. 
\begin{definition}[Compatibility conditions] We say a finite family of weight functions $\{\psi_1,\dots,\psi_n\}\subset C^\infty_b(M)$ is \emph{compatible with control from $\Omega$}, if there exists an open set $\Omega_0\subset \Omega$ with the following properties:
\begin{enumerate}
\item We have $d(\Omega_0, M\setminus \Omega)>0$ where $d$ is the distance on $M$.
\item There exist constants $\rho, \tau>0$ such that, at each point $x\in M\setminus \Omega_0$, for each $k$, if $\abs{\nabla_g \psi_k(x)}<2\rho$, then there exists some $l$ that
\begin{equation}\label{S2LL20}
\abs{\nabla_g \psi_l(x)}\ge2\rho, \quad \psi_l(x)\ge \psi_k(x)+\tau.
\end{equation}
\end{enumerate}
\end{definition}
It is natural to impose the compatibility condition upon the weight functions. We aim to control the $L^2$-size of a quasi-mode by merely the $L^2$-size of that inside the region of control $\Omega$. At $x$ outside the region of control, if we allow some weight functions to have vanishing gradients, they will not control the size of the quasi-mode locally near $x$. Therefore there has to be another weight whose gradient is sufficiently large to control that locally near $x$. This explains the first part of \eqref{S2LL20}. 

On another hand, at such a point $x$, because we use the exponential weights $\exp(e^{\lambda\psi_l}/h)$ whose control is exponentially weak, we do not want this very weak control to be cloaked by the large exponential sizes of other non-controlling weights $\exp(e^{\lambda\psi_k}/h)$. To avoid that, we ask for a fixed gap between non-controlling and controlling weights, as in the second part of \eqref{S2LL20}. Then we have $\exp(e^{\lambda\psi_k}/h)\le e^{-\epsilon/h}\exp(e^{\lambda\psi_l}/h)$ for some uniform constant $\epsilon>0$ depending on $\tau$. Now we note that the control induced by $\psi_l$ is observable, in the sense that the non-controlling weight $\psi_k$ generates an exponentially weaker term.  

\begin{theorem}[Global Carleman estimates with multiple weights]\label{S2T2}
For $M$ a manifold of bounded geometry without boundary, assume there are non-negative Carleman weights $\psi_1, \dots, \psi_n$ compatible with the control from $(\Omega, \Omega_0)$ in the sense of \eqref{S2LL20}. Then, we have a global Carleman estimate with constant $C>0$, independent of semiclassical parameter $h\in(0, h_0)$ for small $h_0$, such that
\begin{equation}
\left\|u\right\|_{L^2(M)}\le e^{C/h}\left(\left\|\left(h^2\Delta_g-V(x;h)\right) u\right\|_{L^2(M)}+\left\|u\right\|_{L^2(\Omega)}\right),
\end{equation}
where $V\in C^\infty_b(M\times[0, h_0])$ is a semiclassical uniformly bounded real potential. 
\end{theorem}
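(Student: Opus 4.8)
The plan is to reduce the multi-weight estimate to the classical single-weight Carleman estimate with a convexified exponential weight, applied locally, and then patch the local estimates using a partition of unity subordinate to the sets on which each $\psi_k$ has large gradient. First I would fix the standard machinery: for a single weight $\psi_k$ with $|\nabla_g\psi_k|\ge 2\rho$ on an open set $U_k$, the convexified weight $\varphi_k=e^{\lambda\psi_k}$ satisfies H\"ormander's sub-ellipticity/pseudoconvexity condition on $U_k$ for $\lambda$ large enough (this uses only bounded geometry and the lower bound on the gradient, both uniform), so one has the local semiclassical Carleman estimate
\begin{equation*}
h^{3/2}\left\|e^{\varphi_k/h}v\right\|_{L^2}+h^{1/2}\left\|e^{\varphi_k/h}h\nabla_g v\right\|_{L^2}\le C\left\|e^{\varphi_k/h}\left(h^2\Delta_g-V\right)v\right\|_{L^2}
\end{equation*}
for all $v\in C_c^\infty(U_k)$, with $C$ uniform in $h\in(0,h_0)$ and uniform across the (finitely many relevant) weights because all constants are controlled by the bounded-geometry data. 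The potential $V$ is a bounded perturbation and is absorbed for $h$ small.

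Next I would set up the partition of unity. By the compatibility condition, the open sets $U_k=\{x: |\nabla_g\psi_k(x)|>2\rho\}$ together with $\Omega_0$ cover $M$: indeed at any $x\in M\setminus\Omega_0$, either some $|\nabla_g\psi_k(x)|\ge 2\rho$ directly, or the condition produces an $l$ with $x\in U_l$. I would choose $\zeta_0,\zeta_1,\dots,\zeta_n\in C_b^\infty(M)$, a bounded partition of unity with $\supp\zeta_0\subset\Omega_0\subset\Omega$ and $\supp\zeta_k\subset U_k$, which exists by the bounded-geometry hypothesis. Applying the local estimate to $v=\zeta_k u$ and using that $[h^2\Delta_g,\zeta_k]$ is a first-order semiclassical operator with $h$-gain, one gets, after throwing the commutator term partly onto the left-hand gradient term,
\begin{equation*}
h^{3/2}\left\|e^{\varphi_k/h}\zeta_k u\right\|_{L^2}\le C\left\|e^{\varphi_k/h}\left(h^2\Delta_g-V\right)u\right\|_{L^2}+C\left\|e^{\varphi_k/h}(h\nabla_g u+u)\right\|_{L^2(\supp\nabla\zeta_k)}.
\end{equation*}
The term on $\supp\zeta_0$ is harmless since it sits inside $\Omega$. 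The main point is to control the error terms supported on $\supp\nabla\zeta_k$: at any such point $x$, either $x\in\Omega_0$ (absorbed into the $\Omega$-term) or, since $x\notin\Omega_0$, the compatibility condition gives an $l=l(x)$ with $|\nabla_g\psi_l(x)|\ge 2\rho$ and $\psi_l(x)\ge\psi_k(x)+\tau$, hence $\varphi_l(x)=e^{\lambda\psi_l(x)}\ge e^{\lambda\psi_k(x)}e^{\lambda\tau}\ge \varphi_k(x)+\delta$ with $\delta=\delta(\tau,\lambda,\inf\varphi_k)>0$ uniform. Thus $e^{\varphi_k/h}\le e^{-\delta/h}e^{\varphi_l/h}$ pointwise there, so this error is bounded by $e^{-\delta/h}$ times a quantity already being estimated by the $l$-th inequality. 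Summing the $n$ inequalities, multiplying through by $h^{-3/2}$, and absorbing the $e^{-\delta/h}$-small cross terms into the left side for $h<h_0$ small, the weighted gradient terms cancel off and one is left with
\begin{equation*}
\sum_k\left\|e^{\varphi_k/h}\zeta_k u\right\|_{L^2}\le e^{C'/h}\left(\left\|\left(h^2\Delta_g-V\right)u\right\|_{L^2(M)}+\left\|u\right\|_{L^2(\Omega)}\right),
\end{equation*}
and since $\sum_k\zeta_k\ge 1-\zeta_0$ with $\supp\zeta_0\subset\Omega$, and $e^{\varphi_k/h}\ge 1$ because $\psi_k\ge 0$, the left side dominates $\|u\|_{L^2(M\setminus\Omega)}$, which together with the trivial $\|u\|_{L^2(\Omega)}$ term yields the claimed $e^{C/h}$ estimate.

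The main obstacle I expect is the bookkeeping of uniformity: one must verify that \emph{all} constants — the H\"ormander large parameter $\lambda$, the Carleman constant $C$, the commutator bounds, the gap $\delta$, and the size of $h_0$ — can be chosen uniformly over the whole non-compact manifold, which is exactly where the bounded-geometry hypothesis and the uniform lower bounds $2\rho$, $\tau$ in the compatibility condition are essential; without a boundary, however, there are no boundary terms to fight, which simplifies matters relative to \cite{bur98}. A secondary technical point is making the absorption argument clean: because each local estimate loses powers of $h$, one must be careful that the $e^{-\delta/h}$ factor genuinely beats any polynomial loss, which it does, but it requires ordering the weights or iterating the absorption finitely many times. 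I would also need to confirm the partition of unity with the stated support and boundedness properties exists on a manifold of bounded geometry, which is standard (uniformly locally finite cover by geodesic balls of fixed radius, with uniformly bounded bump functions).
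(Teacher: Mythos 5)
There is a genuine gap in the absorption step, and it is exactly at the point where the multi-weight structure has to do its work. Your partition of unity has $\supp\zeta_k\subset U_k=\{\abs{\nabla_g\psi_k}>2\rho\}$, so the commutator errors $\left\|e^{\varphi_k/h}(h\nabla_g u+u)\right\|_{L^2(\supp\nabla\zeta_k)}$ live at points where $\abs{\nabla_g\psi_k}>2\rho$. But the compatibility condition \eqref{S2LL20} produces an $l$ with $\abs{\nabla_g\psi_l}\ge2\rho$ and $\psi_l\ge\psi_k+\tau$ \emph{only at points where} $\abs{\nabla_g\psi_k}<2\rho$; it says nothing at points of $U_k$, where $\psi_k$ may well be the largest weight (indeed this must happen somewhere, e.g.\ at a point maximizing $\psi_k$ locally among the weights). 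At such a point the claimed bound $e^{\varphi_k/h}\le e^{-\delta/h}e^{\varphi_l/h}$ simply is not available, the commutator term has the same exponential size as the quantity you are trying to estimate, and since $\zeta_k$ is not bounded below on $\supp\nabla\zeta_k$ you cannot absorb it into your own left-hand side either. This is why the paper does not use a partition of unity subordinate to the $U_k$'s: it takes, for each $k$, a cutoff $\chi_k$ equal to $1$ on all of $\{\abs{\nabla_g\psi_k}\ge2\rho\}$ and vanishing on $\{\abs{\nabla_g\psi_k}\le\rho\}$, proving the local (hypoelliptic) estimate under the weaker gradient bound $\rho$, so that the commutator errors are confined to the transition annulus $\rho\le\abs{\nabla_g\psi_k}<2\rho$ — precisely where \eqref{S2LL20} guarantees a strictly larger controlling weight and the factor $e^{-\epsilon/h}$ can be extracted. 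Since these $\chi_k$'s are not a partition of unity, the paper then needs the separate pointwise lower bound $\sum_k e^{\phi_k/h}\chi_k\ge\frac1{2n}\sum_k e^{\phi_k/h}$ on $M\setminus\Omega_0$ (again from the gap condition) to recover the full $L^2(M)$ norm on the left; some analogue of this step is missing from your argument as well.

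A secondary bookkeeping issue: your error terms contain $e^{\varphi_k/h}h\nabla_g u$ un-localized by any $\zeta_l$, whereas your $l$-th estimate only controls $\zeta_l$-localized quantities, so "already being estimated by the $l$-th inequality" does not literally hold. The paper avoids this by integrating by parts to convert the gradient commutator terms into terms involving only $u$ and $P_h u$ (its equations \eqref{S2LL17}--\eqref{S2LL18}), and then performs one global absorption of the $e^{-\epsilon/h}$-small term against $\left\|\left(\sum_k e^{\phi_k/h}\right)u\right\|_{L^2(M)}$. Your single-weight Hörmander estimate on $U_k$ and the final step (using $\psi_k\ge0$ so $e^{\varphi_k/h}\ge1$) are fine; the proof would close if you relocate the cutoff transition regions into $\{\rho\le\abs{\nabla_g\psi_k}<2\rho\}$ as above and add the lower-bound/global-absorption step.
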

\begin{proof}
1. We start by deriving a local estimate via the hypoelliptic arguments. First note we can write
\begin{equation}
V(x;h)=V_0(x)+hV_1(x)+h^2 V_2(x;h),
\end{equation}
where $V_0,V_1\in C^\infty_b(M)$ and $V_2\in C^\infty_b(M\times[0, h_0])$. Fix a cutoff $\chi\in C^\infty_b(M)$ such that $\chi\equiv 1$ on $M\setminus \Omega$ and identically $0$ on $\Omega_0$. Fix a $\psi_k$ and denote $U_k^{\upsilon}=\{x\in M: \abs{\nabla_g \psi_k(x)}<\upsilon\}$. For each $k$, fix a $\chi_k\in C^\infty_b(M)$ such that $\chi_k\equiv1$ on $M\setminus U^{2\rho}_k$ and identically $0$ on $U^\rho_k$. Set $P_h=h^2\Delta_g-V(x;h)$.
Construct the exponential Carleman weights by $\phi_k=e^{\lambda \psi_k}$, where $\lambda$ is some large number to be determined later, and the conjugated operator by
\begin{multline}\label{S2L7}
P_{k, h}=e^{\phi_k/h} P_h e^{-\phi_k/h}=\left(h^2\Delta_g-\abs{\nabla_g\phi_k}^2-V_0(x)-h V_1(x)-h^2V_2(x;h)\right)\\
+2h\nabla^j \phi_k\nabla_j-h\Delta_g \phi_k. 
\end{multline}
See \eqref{A1L8} for the notation $\nabla^j \phi_k\nabla_j$. Note
\begin{multline}\label{S2LL28}
\left\|P_{k,h}u\right\|^{2}_{L^2}=\left\|P^*_{k,h}u\right\|^{2}_{L^2}+\left\langle [P_{k,h}^*, P_{k,h}]u, u\right\rangle\ge \left\langle [P_{k,h}^*, P_{k,h}]u, u\right\rangle\\
=h\left\langle \oph(i^{-1}\{\overline{p_{k,h}},p_{k,h}\})\right\rangle+h^2\langle R_2u, u\rangle
=2h\left\langle \oph(\{p_{k,h}^R,p_{k,h}^I\})\right\rangle+h^2\langle R_2u, u\rangle,
\end{multline}
where $R_2\in \Psi_{u,h}^2$ and the real and imaginary parts of the principal symbol are
\begin{equation}
p_k^R=\abs{\xi}^2-\abs{\nabla_g \phi_k}^2-V_0(x),\quad p_k^I=2\xi(\nabla_g\phi_k).
\end{equation} 
Denote the subset of the cotangent bundle that contains the characteristic set, 
\begin{multline}\label{S2LL10}
S_k=\\
\left\{(x,\xi)\in T^*\left(M\setminus(\Omega_{0}\cup U_k^\rho) \right): \frac14\left(\abs{\nabla_g \phi_k}^2+V_0(x)\right)\le \abs{\xi}^2\le 4\left(\abs{\nabla_g \phi_k}^2+V_0(x)\right)\right\},
\end{multline}
outside of which $(p_k^R)^2\ge 9/16$. Consider a microlocal cutoff $b_k(x,\xi)\in S_u^0(M)$ that is supported in $S_k$ and is identically 1 on $\{(x,\xi)\in T^*(M\setminus(\Omega_0\cup U^\rho_k)): 1/2\le (\abs{\nabla_g\phi_k}^2+V_0)^{-1}\abs{\xi}^2\le 2\}$. Note $\oph(1-b_k)$ and $\oph((1-b_k)p_{k,h}^{-1})P_{k,h}$ have the same principal symbol, so
\begin{equation}\label{S2LL26}
\left\|\oph(1-b_k)u\right\|_{H^2_h}\le C\left\|P_{k,h} u\right\|+Ch\left\|u\right\|_{H^1_h}. 
\end{equation}
On another hand, let $b_k'=b_k\langle \xi\rangle^2$ then $\oph(b_k')$ and $\langle hD\rangle^2\oph{b_k}$ are both in $\Psi_u^{-\infty}$ and their principal symbols agree. Thus
\begin{equation}\label{S2LL27}
\left\|\oph(b_k)u\right\|_{H^2_h}\le C\|\oph(b_k')u\|+Ch\|u\|_{L^2}.
\end{equation}
From \eqref{S2LL26} and \eqref{S2LL27} we know that 
\begin{equation}\label{S2LL29}
\left\|u\right\|_{H^2_h}\le C\left\|P_{k,h}u\right\|+C\|\oph(b_k')u\|.
\end{equation}

We claim that $\left\{p_k^R,p_k^I\right\}$ is uniformly bounded from below on $S_k$. Note that at any $(x,\xi)\in S_k\subset T^*(M\setminus U_k^\rho)$ we have $\abs{\nabla_g \psi_k}\ge \rho$ from the definition of $U_k^\rho$. Hence on $S_k$ we have $\abs{\nabla_g \phi_k}=\lambda\abs{\nabla_g \psi_k}e^{\lambda\psi_k}\ge \lambda\rho e^{\lambda\psi_k}$ and therefore
\begin{equation}
\frac14\left(\lambda^2\rho^2e^{2\lambda\psi_k}+V_0(x)\right)\le \abs{\xi}^2\le 4\left(\lambda^2C_ke^{2\lambda\psi_k}+V_0(x)\right),
\end{equation}
where $C_k$'s are some constants dependent only on the maximal size of first derivatives of $\psi_k$'s. 
Let $(x,\xi)\in S_k$. In the canonical coordinates induced by the geodesic normal coordinates around $x$, we have $g_{ij}(x)=\delta_{ij}$ and $\nabla g_{ij}(x)=0$. Compute, by noting that at $x$ we have $\nabla_g=\nabla$, the Euclidean gradient, 
\begin{multline}\label{S2LL11}
\pd_\xi p_{k}^R.\pd_x p_{k}^I=4\lambda^2e^{\lambda\psi_k}\abs{\nabla\psi_k}^2\abs{\xi}^2+4\lambda e^{\lambda\psi_k}\xi^t.(\nabla^2\psi_k).\xi\\
\ge 0+\bigo\left(\lambda^3e^{3\lambda\psi_{k}}\right)+\bigo\left(\lambda e^{\lambda\psi_{k}}\right)
\end{multline}
and
\begin{multline}
-\pd_x p_{k}^R.\pd_\xi p_{k}^I=2\lambda^4e^{3\lambda\psi_{k}}\abs{\nabla \psi_{k}}^4+2\lambda e^{\lambda\psi_k}\nabla\psi^t.\nabla V_0\\+\lambda^3 e^{3\lambda\psi_{k}}\left(\nabla\abs{\nabla\psi_{k}}^2\right)^t.\nabla\psi_{k}
\ge 2\rho^4\lambda^4e^{3\lambda\psi_{k}}+\bigo\left(\lambda e^{\lambda\psi_k}\right) +\bigo\left(\lambda^3 e^{3\lambda\psi_{k}}\right),
\end{multline}
from the uniform boundedness of the derivatives of order up to 2 of $\psi_k$. Hence on $S_k$ with a large $\lambda$, 
\begin{equation}
\left\{p_{k}^R, p_{k}^I\right\}\ge 2\rho^4\lambda^4e^{3\lambda\psi_{k}}+\bigo\left(\lambda^3 e^{3\lambda\psi_{k}}\right)\ge C\lambda^3 e^{3\lambda\psi_{k}}\ge 9/16>0.
\end{equation}

We conclude that throughout $T^*(M\setminus(\Omega_{2\beta}\cup U_k^\rho))$, we have 
\begin{equation}\label{S2LL12}
\eta (1-b_k)^2\left\langle \xi\right\rangle^3 +\left\{p_k^R,p_k^I\right\}\ge 9/16>0
\end{equation}
for some fixed $\eta$ large, and hence there exists $C>0$ such that 
\begin{equation}
\eta (1-b_k)^2\left\langle \xi\right\rangle^3 +\left\{p_k^R,p_k^I\right\}\ge C\langle \xi\rangle^3
\end{equation}
Now by invoking the weak Garding inequality in Proposition \ref{A1T1} on $M\setminus(\Omega_{0}\cup U_k^\rho)$ we have for any $u\in L^2(M)$ with support inside $M\setminus(\Omega_{2\beta}\cup U_k^\rho)$,
\begin{equation}
\left\langle \oph\left(\{p_k^R, p_k^I\}+\eta(1-b_k)^2\langle \xi\rangle^3\right)u, u\right\rangle\ge C\|u\|_{H^{3/2}_h}^2. 
\end{equation}
This implies
\begin{equation}
\left\langle \oph(\{p^R_k, p^I_k\})u, u\right\rangle\ge C\|u\|_{H^{3/2}_h}^2-C\|\oph(1-b_k) u\|_{H^{3/2}_h}^2-Ch\|u\|_{H^2_h}^2.
\end{equation}
From \eqref{S2LL28}, we use \eqref{S2LL26} and \eqref{S2LL29} to obtain
\begin{equation}
\left\|P_{k,h}u\right\|^2\ge Ch\|u\|_{H^{3/2}_h}^2-Ch\|P_{k,h} u\|^2
\end{equation}
and absorb the last term: for any $u$ with support inside $M\setminus \left(\Omega_{2\beta}\cup U^{\rho}_k\right)$ we have
\begin{equation}
\left\|P_{k,h}u\right\|\ge Ch^{\frac{1}{2}}\|u\|_{H^{3/2}_h}.
\end{equation}
Apply the above estimate to $e^{\phi_k/h}\chi_k\chi u$ to obtain the local estimate, 
\begin{equation}\label{S2L1}
\left\|e^{\phi_k/h}\chi_k\chi u\right\|_{L^2}\leq Ch^{-\frac12}\left\|P_{k, h}e^{\phi_k/h}\chi_k\chi u\right\|_{L^2}=Ch^{-\frac12}\left\|e^{\phi_k/h}P_{h}\chi_k\chi u\right\|_{L^2}
\end{equation}
from the claimed hypoellipticity.

2. We want to derive a crude version of the global estimate by just summing up the local estimates. Let \eqref{S2L1} be further simplified. Estimate
\begin{equation}\label{S2LL19}
\left\|e^{\phi_k/h}\chi_k\chi u\right\|_{L^2}\le Ch^{-\frac12}\left(\left\|e^{\phi_k/h}\chi_k\chi P_{h}u\right\|_{L^2}+\left\|e^{\phi_k/h}\left[P_{k, h},\chi_k\chi\right]  u\right\|_{L^2}\right)
\end{equation}
and
\begin{equation}\label{S2LL16}
\left[P_h, \chi_k\chi \right]u=h^2\nabla_g^*\left(\chi_k \nabla_g\chi+\chi\nabla_g\chi_k\right)u-2h^2 \chi_k \nabla^j\chi\nabla_j u-2h^2 \chi \nabla^j\chi_k\nabla_j u.
\end{equation}
Let $\kappa\in C^\infty_b(M)$ be a cutoff function supported inside $\Omega\setminus\Omega_0$ and being identically $1$ on the support of $\nabla_g \chi$, and let $\kappa_k\in C^\infty_b(M)$ be cutoff functions supported inside $U^{2\rho}_k\setminus U^{\rho}_k$ and identically $1$ on the support of $\nabla_g \chi_k$. We immediately have the estimates of the first two terms in \eqref{S2LL16}, 
\begin{equation}
\left\|h^2\nabla_g^*\left(\chi_k \nabla_g\chi+\chi\nabla_g\chi_k\right)u\right\|_{L^2} \le Ch^2 \left\|\kappa u\right\|_{L^2}+Ch^2 \left\|\kappa_k u\right\|_{L^2},
\end{equation}
from the uniform boundedness of first two derivatives of $\chi$ and $\chi_k$. Consider the next term, 
\begin{multline}\label{S2LL17}
\left\|2h^2\chi_k\nabla^j \chi \nabla_j u\right\|_{L^2}^2\le 4h^4\left\|\abs{\chi_k\nabla_g\chi} \abs{\nabla_g u}\right\|^2_{L^2}=4h^4\left\langle \abs{\chi_k\nabla_g\chi}^2\nabla_g u, \nabla_g u\right\rangle\\
=4h^4\left\langle \nabla^*_g \left(\abs{\chi_k\nabla_g\chi}^2\nabla_g u\right), u\right\rangle=-4h^4\cre\left\langle \nabla^j\abs{\chi_k\nabla_g\chi}^2\nabla_j u, u\right\rangle\\
+4h^4\cre\left\langle \abs{\chi_k\nabla_g\chi}^2\Delta_g u, u\right\rangle.
\end{multline}
The first term of the last line is estimated via an adjoint argument
\begin{equation}\label{S2LL18}
4h^4\cre\left\langle \left(\nabla^j\abs{\chi_k\nabla_g\chi}^2\right)\nabla_j u, u\right\rangle=2h^4\left\langle \left(\Delta_g\abs{\chi_k\nabla_g\chi}^2\right)u, u\right\rangle.
\end{equation}
Indeed, we have
\begin{multline}
\left\langle \left(\nabla^j\abs{\chi_k\nabla_g\chi}^2\right)\nabla_j u, u\right\rangle=\left\langle \nabla_g u, \left(\nabla_g\abs{\chi_k\nabla_g\chi}^2\right)u\right\rangle\\
=\left\langle  u, \nabla_g^*\left(\left(\nabla_g\abs{\chi_k\nabla_g\chi}^2\right)u\right)\right\rangle=\left\langle  u, \left(\left(\Delta_g\abs{\chi_k\nabla_g\chi}^2\right)u\right)\right\rangle-\left\langle  u, \left(\nabla^j\abs{\chi_k\nabla_g\chi}^2\right)\nabla_j u\right\rangle.
\end{multline}
We bring \eqref{S2LL17} and \eqref{S2LL18} together to see
\begin{multline}
\left\|2h^2\chi_k\nabla^j \chi \nabla_j u\right\|_{L^2}\le Ch^2\left\|\kappa u\right\|_{L^2}+Ch^2\left\|\kappa\Delta_g u\right\|_{L^2}=Ch^2\left\|\kappa u\right\|_{L^2}\\
+C\|\kappa P_h u\|_{L^2}+C\|\kappa V(x) u\|_{L^2}\le C\left\|\kappa u\right\|_{L^2}+C\|P_h u\|_{L^2},
\end{multline}
since the third order derivatives of $\chi$ are uniformly bounded and $V(x)$ is a bounded potential. Symmetrically we have
\begin{equation}
\left\|2h^2\chi\nabla^j \chi_k \nabla_j u\right\|_{L^2}\le C\left\|\kappa_k u\right\|_{L^2}+C\|P_h u\|_{L^2}
\end{equation}
and a complete estimate of \eqref{S2LL16},
\begin{equation}
\left\|\left[P_h, \chi_k\chi\right]u\right\|_{L^2}\le C\|P_h u\|_{L^2}+C\left\|\kappa u\right\|_{L^2}+C\left\|\kappa_k u\right\|_{L^2}.
\end{equation}
Finally we have for $h\in (0, h_0]$, 
\begin{multline}\label{S2L2}
\left\|e^{\phi_k/h}\chi_k\chi u\right\|_{L^2}\le Ch^{-\frac12}\left\|e^{\phi_k/h}P_h u\right\|_{L^2}+Ch^{-\frac12}\left\|e^{\phi_k/h}\kappa_k u\right\|_{L^2}\\
+Ch^{-\frac12}\left\|e^{\phi_k/h}\kappa u\right\|_{L^2}.
\end{multline}
Note that the constants $\lambda, C, h_0>0$ could be chosen uniformly such that \eqref{S2L2} holds for each $k$. Sum up \eqref{S2L2} over $k=1,\dots, n$ to get the crude version of the global estimate,
\begin{multline}\label{S2L4}
\sum_{k=1}^n\left\|e^{\phi_k/h}\chi_k\chi u\right\|_{L^2}\le Ch^{-\frac12}\sum_{k=1}^n\left\|e^{\phi_k/h}P_h u\right\|_{L^2}+Ch^{-\frac12}\sum_{k=1}^n\left\|e^{\phi_k/h}\kappa_k u\right\|_{L^2}\\+Ch^{-\frac12}\sum_{k=1}^n\left\|e^{\phi_k/h}\kappa u\right\|_{L^2},
\end{multline}
in which $\kappa$ is supported inside $\Omega\setminus\Omega_0$, and $\kappa_k$ is supported inside $U^{2\rho}_k\setminus U^\rho_k$.

3. Finally we use the compatibility condition \eqref{S2LL20} imposed upon the Carleman weights to refine the global estimate \eqref{S2L4}. Recall that $U_{l}^{2\rho}=\{x\in M: \abs{\nabla_g \psi_l}<2\rho\}$ stands for the points at which the weight $\psi_l$ fails to control the quasimode. Given the assumptions on compatibility \eqref{S2LL20}, at each $x\in U_l^{2\rho}$, there exists some $m$ such that
\begin{equation}\label{S2LL21}
\psi_{l}\le \psi_{m}-\tau, \quad \phi_m\ge \phi_l+\left(e^{\lambda\tau}-1\right)e^{\lambda\psi_l}\ge \phi_l+\epsilon,
\end{equation}
where
\begin{equation}
\epsilon=\left(e^{\lambda\tau}-1\right)e^{\lambda\min_{k=1}^n(\inf_{M}\psi_k)}>0.
\end{equation}
Note that $\epsilon$ does not depend on $l, m$. We have
\begin{equation}\label{S2LL22}
e^{\phi_{l}/h}\le e^{-\epsilon/h}e^{\phi_{m}/h}.
\end{equation}
Now at each $x\in  M\setminus \Omega_0$, we can partition $\{1,\dots, n\}$ into $\{l_1,\dots, l_{n-q}\}$ and $\{m_1,\dots, m_q\}$ with some $0<q\le n$, where
\begin{equation}
\abs{\nabla_g \phi_{l_*}(x)}<2\rho, \quad \abs{\nabla_g \phi_{m_*}(x)}\ge 2\rho.
\end{equation}
For each $i=1,\dots, n-q$, as $x\in U^{2\rho}_{l_i}$, there is some $m_j$ such that 
\begin{equation}
\phi_{l_i}\le \phi_{m_j}-\epsilon,\quad e^{\phi_{l_i}/h}\le e^{-\epsilon/h}e^{\phi_{m_j}/h}\le e^{-\epsilon/h}\sum_{j=1}^{q}e^{\phi_{m_j}/h},
\end{equation}
from \eqref{S2LL21} and \eqref{S2LL22}. Then 
\begin{equation}
\sum_{i=1}^{n-q}e^{\phi_{l_i}/h}\le \left(n-q\right)e^{-\epsilon/h}\sum_{j=1}^{q}e^{\phi_{m_j}/h}.
\end{equation}
As $x\notin U_{m_j}^{2\rho}$, the cutoffs $\chi_{m_j}$'s are all $1$, and therefore
\begin{multline}\label{S2L5}
\sum_{k=1}^n e^{\phi_k/h}\chi_k\ge\sum^q_{j=1}e^{\phi_{m_j}/h}\ge \frac12 \sum^q_{j=1}e^{\phi_{m_j}/h}+\frac{1}{2(n-q)}e^{\epsilon/h}\sum_{l_i}^{n-q}e^{\phi_{l_i}/h}\\
\ge \frac{1}{2n}\sum_{k=1}^n e^{\phi_k/h},
\end{multline}
for $0<q<n$. Note that when $q=n$, \eqref{S2L5} holds trivially because all $\chi_k$'s are 1, and therefore we conclude that \eqref{S2L5} holds at each $x\in M\setminus\Omega_0$. This improves the estimate \eqref{S2L4} from the left:
\begin{equation}\label{S2LL23}
\sum_{k=1}^n\left\|e^{\phi_k/h}\chi_k\chi u\right\|_{L^2}\ge\left\|\sum_{k=1}^ne^{\phi_k/h}\chi_k\chi u\right\|_{L^2}\ge \frac{1}{2n}\left\|\left(\sum_{k=1}^ne^{\phi_k/h}\right)\chi u\right\|_{L^2}
\end{equation}
as $\chi$ is supported inside $M\setminus \Omega_0$. 

Meanwhile, for each $l=1,\dots, n$, the cutoff function $\kappa_l$ is supported in $U_{l}^{2\rho}$. Therefore at each $x\in U_{l}^{2\rho}$, we know from \eqref{S2LL21} and \eqref{S2LL22} that
\begin{equation}\label{S2L6}
e^{\phi_l/h}\kappa_l\le e^{-\epsilon/h}e^{\phi_m/h}\le e^{-\epsilon/h}\sum_{k=1}^ne^{\phi_k/h}.
\end{equation}
This inequality outside $U_l^{2\rho}$ holds trivially as $\kappa_l$ vanishes, and hence holds everywhere in $M\setminus \Omega_0$. It improves the second term on the right in \eqref{S2L4}, 
\begin{multline}\label{S2LL24}
Ch^{-\frac12}\sum_{l=1}^n\left\|e^{\phi_l/h}\kappa_l u\right\|_{L^2}\le Ch^{-\frac12}\sum_{l=1}^n\left\|e^{-\epsilon/h}\sum_{k=1}^ne^{\phi_k/h} u\right\|_{L^2}\\
=Cn h^{-\frac12}e^{-\epsilon/h}\left\|\left(\sum_{k=1}^ne^{\phi_k/h}\right)u\right\|_{L^2}.
\end{multline}
Bring \eqref{S2LL23} and \eqref{S2LL24} into \eqref{S2L4} to observe
\begin{multline}
\frac1{2n}\left\|\left(\sum_{k=1}^n e^{\phi_k/h}\right)\chi u\right\|_{L^2}\le Ch^{-\frac12}\sum_{k=1}^n\left\|e^{\phi_k/h}P_h u\right\|_{L^2}\\
+Cnh^{-\frac12}e^{-\epsilon/h}\left\|\left(\sum_{k=1}^ne^{\phi_k/h}\right)u\right\|_{L^2}+Ch^{-\frac12}\sum_{k=1}^n\left\|e^{\phi_k/h}\kappa u\right\|_{L^2}.
\end{multline}

Finally, as $1-\chi$ and $\kappa$ are both supported inside $\Omega$ and are uniformly bounded, we can bound the $L^2(M)$-norm of terms $e^{\phi_k/h} (1-\chi)u$ and $e^{\phi_k/h} \kappa u$ by the $L^2(\Omega)$-norm of $e^{\phi_k/h} u$, that is, 
\begin{multline}\label{S2LL14}
\left\|\left(\sum_{k=1}^n e^{\phi_k/h}\right)u\right\|_{L^2(M)}\le \left\|\left(\sum_{k=1}^n e^{\phi_k/h}\right)\left(1-\chi\right) u\right\|_{L^2(M)}\\
+\left\|\left(\sum_{k=1}^n e^{\phi_k/h}\right)\chi u\right\|_{L^2(M)} \le Ch^{-\frac12}\sum_{k=1}^n\left\|e^{\phi_k/h}P_h u\right\|_{L^2(M)}\\
+Cnh^{-\frac12}e^{-\epsilon/h}\left\|\left(\sum_{k=1}^ne^{\phi_k/h}\right)u\right\|_{L^2(M)}+Ch^{-\frac12}\sum_{k=1}^n\left\|e^{\phi_k/h} u\right\|_{L^2(\Omega)}
\end{multline}
As $h^{-\frac12}e^{-\epsilon/h}\rightarrow 0$ semiclassically, we can absorb the second term on the right by the term on the left for small $h$, that is, 
\begin{multline}\label{S2LL25}
\left\|\left(\sum_{k=1}^n e^{\phi_k/h}\right)u\right\|_{L^2(M)}\le Ch^{-\frac12}\sum_{k=1}^n\left\|e^{\phi_k/h}P_h u\right\|_{L^2(M)}\\
+Ch^{-\frac12}\sum_{k=1}^n\left\|e^{\phi_k/h} u\right\|_{L^2(\Omega)}.
\end{multline}
Denote the global maximum and minimum over all $\phi_k$'s by 
\begin{equation}
K_+=\max_{1\le k\le n}\left(\sup_{x\in M}\phi_k(x)\right), \quad K_-=\min_{1\le k\le n}\left(\inf_{x\in M}\phi_k(x)\right),
\end{equation}
where $K_+>K_-$. Then we have from \eqref{S2LL25}, 
\begin{equation}
e^{{K_-}/h}\left\|u\right\|_{L^2(M)}\le Cnh^{-\frac12}e^{{K_+}/h}\left\|P_h u\right\|_{L^2(M)}+Cnh^{-\frac12}e^{{K_+}/h}\left\|u\right\|_{L^2(\Omega)},
\end{equation}
which is reduced to
\begin{equation}
\left\|u\right\|_{L^2(M)}\le e^{C/h}\left(\left\|P_h u\right\|_{L^2(M)}+\left\|u\right\|_{L^2(\Omega)}\right).
\end{equation}
This is our claim.
\end{proof}
\begin{remark}
\begin{enumerate}[wide]\label{S2T1}
\item When $M$ is a compact manifold without boundary, then one could control from any open set $\Omega$ with arbitrary open subset $\Omega_0$ with only one weight. It suffices to find a Morse function and find a diffeomorphism moving all critical points into $\Omega_0$. 
\item It is observed that the uniform gap $\psi_l\ge\psi_k+\tau$ in the compatibility condition is necessary. By this fixed gap $\tau$, we extracted an $e^{-\varepsilon/h}$-decay in \eqref{S2L6}, further leading to the absorption argument between \eqref{S2LL14} and \eqref{S2LL25}. Without such this uniform gap we see the inequality \eqref{S2LL14} will not generate any effective bound on $L^2(M)$-norm of $\sum_{k=1}^n e^{\phi_k/h}u$, as this term on the right is now of size $h^{-\frac12}\rightarrow \infty$ semiclassically.
\end{enumerate}
\end{remark}

\section{Construction of Carleman weights}
In this section, we aim to explicitly construct the weight functions on our prespecified manifold $(M, g)$ in \eqref{S1LL10} to obtain the global Carleman estimate we developed in the previous section. Assume throughout this section that the Network Control Condition $(L,\omega,2\beta, \{x_m\})$ defined in Definition \ref{S1T1} holds on $(M,g)$. Let $\Omega_{\upsilon}=\{x\in M: a(x)>\upsilon\}$. Our ultimate target in this section is to control the whole manifold $(M,g)$ from $\left(\Omega_\beta, \Omega_{2\beta}\right)$. 

The strategy is to start by working on the model manifold $(M, g_0)$ in \eqref{S1L9}. We construct a family of weights on each end, and another weight on the central compactum, then show they are compatible with control from $(\Omega_\beta, \Omega_{2\beta})$. Eventually we pull back the weights via $\Phi_0$ back to the prespecified $(M, g)$. 

Note that $\Phi_0(x)=x$ for each $x\in M$, and $\Phi_0(\Omega_{\upsilon})=\Omega_{\upsilon}$. We claim that a Network Control Condition $(L,\omega, 2\beta, \{x_m\})$ on $(M, g)$ implies another $(L',\omega', 2\beta, \{x_m\})$ on $(M,g_0)$. Recall that $d\Phi_0^{-1}$ is bounded from above and below, 
\begin{equation}\label{S3L8}
C_0\le \left\|d \Phi_0^{-1}\right\|\le C_1.
\end{equation}
and therefore
\begin{equation}\label{S3L7}
C_1^{-1}d_{g}(x,y)\le d_{g_0}(\Phi_0(x), \Phi_0(y))\le C_0^{-1}d_{g}(x,y)
\end{equation}
for each $x, y\in M$. Let $L'=C_0^{-1}L$ and $\omega'=C_0^{-1}\omega$. At each $x\in M$, we have
\begin{equation}
d_{g_0}(x, \bigcup_m \left\{x_m\right\})\le C_0^{-1}d_{g}(x,\bigcup_m \left\{x_m\right\})\le C_0^{-1}L=L',
\end{equation}
the last inequality of which comes from the Network Control Condition $(L,\omega, 2\beta, \{x_m\})$ on $(M,g)$. We also have $a(x)\ge 2\beta>0$ on $\bigcup_m B_{g_0}(x_m,\omega')\subset\bigcup_m B_{g}(x_m,\omega)$ as an immediate result of \eqref{S3L7}. Therefore we could, without loss of generality, assume a Network Control Condition $(L,\omega, 2\beta, \{x_m\})$ on $(M, g_0)$. Note that 
\begin{equation}\label{S3L9}
\bigcup_m B_{g_0}(x_m,\omega) \subset \Omega_{2\beta}. 
\end{equation}

We begin by constructing weight functions on the cylindrical ends, where the scaling functions $\theta_k$'s are identically 1. 

\begin{lemma}[Cylindrical ends]\label{S3T1}
Consider a cylindrical end $(M_k, g_0)$, that is $\pd M_k\times(1,\infty)_r$ endowed with the metric $g_0=dr^2+h$, where $h$ is a smooth metric on closed $\pd M_k$. There exists $\psi \in C^\infty_b(M_k)$, where $1\le\psi\le 3$ and there is some $\rho>0$ such that,
\begin{equation}\label{S3L1}
x\notin\Omega_{2\beta}\Rightarrow \abs{\nabla_{g_0} \psi(x)}\ge 2\rho.
\end{equation}
\end{lemma}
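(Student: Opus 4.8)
The plan is to build $\psi$ on the cylindrical end $M_k = \pd M_k \times (1,\infty)_r$ essentially as a function of the radial variable $r$ alone, slightly modulated near any portion of $M_k$ that meets $\Omega_{2\beta}$. The guiding idea: on a cylinder the naive choice $\psi(r,\omega) = f(r)$ for a monotone $f$ has $\abs{\nabla_{g_0}\psi} = \abs{f'(r)}$, which is under our control everywhere, so the only issue is (i) keeping $\psi$ uniformly bounded and with uniformly bounded derivatives of all orders — this forces $f$ to flatten out at infinity, creating zeros of $f'$ — and (ii) ensuring that wherever $f'$ vanishes we are safely inside $\Omega_{2\beta}$. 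By the Network Control Condition on $(M,g_0)$ and \eqref{S3L9}, every point of $M_k$ is within distance $L$ of some $x_m$, and an $\omega$-ball around each $x_m$ lies in $\Omega_{2\beta}$; so the set $M_k \setminus \Omega_{2\beta}$ cannot contain a radial interval longer than roughly $2L$ on any fixed cross-sectional slice, which is exactly the room we need.

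First I would fix a large length scale $R \gg L$ and choose a smooth function $f:(1,\infty)\to[1,3]$ that increases with $\abs{f'(r)} \ge 4\rho$ on a union of intervals of length $\sim 2R$ separated by short plateaux of length $\ll R$ where $f$ is locally constant, arranging $\abs{\pd_r^m f} \le C_m$ uniformly (possible since the total variation is bounded by $2$ and we stretch the increase over long intervals). This hands us $\psi_0(r,\omega) := f(r) \in C^\infty_b(M_k)$ with $1\le\psi_0\le 3$ and $\abs{\nabla_{g_0}\psi_0} \ge 4\rho$ off a "bad set" $B$ which is a union of thin cylindrical collars $\pd M_k \times I_j$. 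Second, I would show that after translating the $I_j$'s appropriately — using that the damped region's radial footprint has bounded gaps — every such collar $\pd M_k \times I_j$ can be made to lie inside $\Omega_{2\beta}$; concretely, on each cross-section the complement of $\Omega_{2\beta}$ along the $r$-axis has no gap exceeding $2L$, so a plateau of the chosen width can always be slid to sit over a damped portion. Third, if the damped set is not literally a product over the cross-section, I would add a small bump $\epsilon \chi(\omega)\eta(r)$ supported in the collar to guarantee $\abs{\nabla_{g_0}\psi}\ge 2\rho$ there too off $\Omega_{2\beta}$ — but a cleaner route is to simply choose the plateaux so short and so positioned that $B \subset \Omega_{2\beta}$ outright, using that $a \ge 2\beta$ on a full $\omega$-neighborhood of each $x_m$ and $\omega$-balls tile a neighborhood of any prescribed radial slab up to the $L$-covering.

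Then $\psi := \psi_0$ works: off $\Omega_{2\beta}$ we are off $B$, so $\abs{\nabla_{g_0}\psi} \ge 4\rho \ge 2\rho$, giving \eqref{S3L1}, while $1\le\psi\le 3$ and the uniform derivative bounds are built in. I expect the main obstacle to be the second step — making the plateaux (the zero set of $f'$) genuinely disjoint from $M_k\setminus\Omega_{2\beta}$ — since the damped region supplied by the Network Control Condition is only an $L$-dense union of $\omega$-balls, not a product set, and its radial cross-sections vary with $\omega \in \pd M_k$. The resolution is quantitative: because $\pd M_k$ is compact, finitely many of the $x_m$ whose balls meet $M_k$ suffice to realize the $L$-covering on any bounded radial slab, their radial coordinates form an $L$-net in $r$, and so we may choose the (finitely many, periodically repeated) plateau intervals to each sit within $\omega/2$ of some $x_m$'s radial coordinate; shrinking the plateau width below $\omega$ and using $d_{g_0}\le$ (ambient product distance) then forces the whole collar into $B_{g_0}(x_m,\omega)\subset\Omega_{2\beta}$. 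The rest — the uniform $C^m$ bounds on $f$, and the fact that $\nabla_{g_0}\psi = f'(r)\,\pd_r$ has length exactly $\abs{f'(r)}$ in the product metric — is routine.
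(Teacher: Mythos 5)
Your construction has a fatal gap at its central step. Any radial profile $f:(1,\infty)\to[1,3]$ with uniformly bounded derivatives must have $\pd_r f=0$ somewhere in every sufficiently long radial interval (as written, with $f$ monotone "increasing" and $\abs{f'}\ge 4\rho$ on infinitely many intervals of length $\sim 2R$, the total variation would be infinite, so $f$ would in fact have to oscillate; but that is a repairable slip). The unrepairable problem is that for a purely radial weight the degenerate set is a \emph{full cross-sectional collar} $\pd M_k\times I_j$, and the Network Control Condition gives you no way to place such a collar inside $\Omega_{2\beta}$. The NCC only says that the centers $x_m$ form an $L$-net and that $a\ge 2\beta$ on the balls $B(x_m,\omega)$; the union of these $\omega$-balls can be very sparse relative to the cross-section (take $\pd M_k$ of large diameter and $\omega$ small), so at \emph{every} radius $r$ most of the slice $\pd M_k\times\{r\}$ may be undamped, and no radial translation of the plateau can force $\pd M_k\times I_j\subset\Omega_{2\beta}$. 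Your quantitative fallback is also false: "within distance $L$ of some $x_m$" is a distance in $M$, not along the radial line through a fixed cross-sectional point, so a fixed slice $\{y_0\}\times(1,\infty)$ can miss $\Omega_{2\beta}$ entirely while the NCC still holds; there is no $2L$ bound on radial gaps per slice, and $\omega$-balls certainly do not tile a neighborhood of a radial slab.

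The bump-fix $\epsilon\chi(y)\eta(r)$ does not close the gap either: $\chi$ is a function on the closed manifold $\pd M_k$ and necessarily has critical points, so the modified weight still has genuine critical points inside the collar (where $\nabla_y\chi=0$ and $\eta'=0$), which may sit outside $\Omega_{2\beta}$. At that point you are forced into exactly the structure of the paper's proof, which your proposal does not set up: take a Morse function on $\pd M_k$ times a radial periodic factor, so that the degenerate set is a discrete family of non-degenerate critical points with a uniform lower bound on $\abs{\nabla_{g_0}\psi}$ away from uniform balls around them; stretch radially (the map $\Phi_2$) so that these critical points are separated by more than $2(L+4\omega)$, which is what makes the target balls disjoint; and then use the NCC to find, in each ball $B(p_m',L+2\omega)$, a damped ball $B(x_m,\omega)$ and a uniformly $C_b^\infty$ diffeomorphism $\Phi_3$ moving each isolated critical point into its damped ball. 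Moving isolated points into small balls is possible with uniform bounds; moving whole collars into a sparse union of small balls is not, and that is the step on which your argument breaks.
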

\begin{proof}
In this lemma we Let $M=M_k$ for we here only care what is happening on $M_k$.

1. We start by constructing a prototype weight based on the periodic structure. On $\pd{M}$ pick a Morse function $\psi_0\in C^\infty(\pd M)$ that is positive on $\pd M$. As $\pd M$ is compact, $\psi_0$ has $N$ critical points at $p_1,\dots, p_N\in \pd M$. Fix $\epsilon>0$ small. Let a periodic function $\psi_1(r)\in C^\infty(\left[1, \infty\right))$ be given by
\begin{equation}
\psi_1(r)=\cos\left(\frac{\pi\left(r-\left(1+2\epsilon\right)\right)}{2\left(L+4\omega\right)\left(N+1\right)}\right)+2,
\end{equation}
for small $\epsilon>0$. This is a function with a period of $4\left(L+4\omega\right)\left(N+1\right)$. Consider 
\begin{equation}\label{S3L2}
\tilde\psi_2(y, r)=\psi_0(y)\psi_1(r),
\end{equation}
and modify its size to get
\begin{equation}
\psi_2=1+2\left(\max{\tilde\psi_2}-\min{\tilde\psi_2}\right)^{-1}\left(\tilde\psi_2- \min{\tilde\psi_2}\right),
\end{equation}
where we note that $1\le \psi_2\le 3$ and we will later modify $\psi_2$ to move around the critical points. The critical points of $\psi_2$ are
\begin{equation}
p_{k, t}=\left(p_k, 1+2\epsilon+2 t \left(L+4r\right)\left(N+1\right)\right), \quad t\in \mathbb{N}_0, \quad k=1, \dots, N,
\end{equation}
where $\mathbb{N}_0=\mathbb{N}\cup \{0\}$. 

\begin{figure}
\begin{tikzpicture}[minimum size=0.01cm]
\draw (0,0) ellipse (0.5 and 1.25) node {$\partial M$};
\draw (0, -1.25) -- (5, -1.25);

\draw [dashed] (1, 1.25) arc (90:270:-0.5 and 1.25);
\draw [dashed] (2, 1.25) arc (90:270:-0.5 and 1.25);
\draw [dashed] (3, 1.25) arc (90:270:-0.5 and 1.25);
\draw [dashed] (4, 1.25) arc (90:270:-0.5 and 1.25);

\draw (5, 1.25) arc (90:270:-0.5 and 1.25);
\draw (0,1.25) -- (5,1.25);  

\draw [dashed] (-0.5,0) ellipse (0.5 and 1.25);
\draw (-0.5, -1.25) -- (0, -1.25);
\draw (-0.5, 1.25) -- (0, 1.25);

\draw [dashed] (5.5, 1.25) arc (90:270:-0.5 and 1.25);
\draw (5, -1.25) -- (5.5, -1.25);
\draw (5, 1.25) -- (5.5, 1.25);

\draw [->] (0.35, 0.89) node [circle, minimum size=1mm, inner sep=0, draw=black, fill=white,label={[label distance=-0.13cm]left:{$p_{1,0}$}}] {} -- (1.35, 0.89) node [circle, minimum size=1mm, inner sep=0,fill=black, label={[label distance=0cm]right:{$p_{1,0}'$}}] {};
\draw [->] (0.48, 0.35) node [circle, minimum size=1mm, inner sep=0, draw=black, fill=white,label={[label distance=-0.13cm]left:{$p_{2,0}$}}] {} -- (2.48, 0.35) node [circle, minimum size=1mm, inner sep=0,fill=black, label={[label distance=0cm]right:{$p_{2,0}'$}}] {};
\draw [->] (0.48, -0.35) node [circle, minimum size=1mm, inner sep=0, draw=black, fill=white,label={[label distance=-0.13cm]left:{$p_{3,0}$}}] {} -- (3.48, -0.35) node [circle, minimum size=1mm, inner sep=0,fill=black, label={[label distance=0cm]right:{$p_{3,0}'$}}] {};
\draw [->] (0.35, -0.89) node [circle, minimum size=1mm, inner sep=0, draw=black, fill=white,label={[label distance=-0.13cm]left:{$p_{4,0}$}}] {} -- (4.35, -0.89) node [circle, minimum size=1mm, inner sep=0,fill=black, label={[label distance=0cm]right:{$p_{4,0}'$}}] {};
\end{tikzpicture}
\caption{$\Phi_2$ stretches the critical points of $\psi_2$ apart.}
\label{F3}
\end{figure}
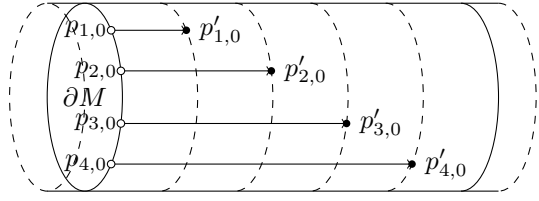

2. We modify the weight to have critical points of distance uniformly bounded from below by $2(L+4\omega)$ from each other. For $1\le k \le N$, define the flows $\gamma_k(s)$ for $s\in[0,\infty)$ by
\begin{equation}
\gamma_k(s): \left(y, r\right)\mapsto \left(y, r+2k\left(L+4\omega\right)s\right),
\end{equation}
generated by the constant radial vector fields $2k\left(L+4\omega\right)\pd_r$. Also denote by the flow $\gamma(t)$, 
\begin{equation}
\gamma(t): \left(y, r\right)\mapsto \left(y, r+4(N+1)\left(L+4\omega\right)t\right),\quad t\in \mathbb{N}_0
\end{equation}
that preserves the periodicity of $\psi_2$, in the sense that $\gamma(t)^*\psi_2=\psi_2$ for each $t\in\mathbb{N}_0$. Note that the flow $\gamma_k(1)$ pulls back points
\begin{equation}
p'_{k,t}=\left(p_k, 1+2\epsilon+2 t \left(L+4r\right)\left(N+1\right)+2k\left(L+4\omega\right)\right), \quad t\in\mathbb{N}_0
\end{equation}
to critical points $p_{k,t}$, that is, $\gamma_k(1): p_{k,t}\mapsto p'_{k,t}$. Let $\Gamma_{k,t}=\gamma_{k}\left([0,1]\right)(p_{k,t})$ and $\Gamma_{k,t}^{\upsilon}$ to be the $\upsilon$-neighbourhood of $\Gamma_{k, t}$, for $\upsilon=\epsilon,\epsilon/2$. Now for $t=0,1$, construct diffeomorphisms $\phi_{k, t}\in C^\infty(\Gamma_{k,t}^\epsilon; \Gamma_{k,t}^\epsilon)$ with inverses $\phi^{-1}_{k, t}\in C^\infty(\Gamma_{k,t}^\epsilon; \Gamma_{k,t}^\epsilon)$ such that
\begin{equation}
\begin{cases}
\phi_{k,t}: p'_{k, t}=\gamma_{k}(1)p_{k, t}\mapsto p_{k, t}\\
\phi_{k, t}=\id,~\textrm{on}~{\Gamma_{k, t}^\epsilon\setminus \Gamma_{k, t}^{\epsilon/2}}
\end{cases}.
\end{equation}
Because that all $\Gamma_{k,t}^\epsilon$'s are disjoint and $\phi_{k, t}=\id$ on $\Gamma_{k, t}^\epsilon\setminus \Gamma_{k, t}^{\epsilon/2}$, we can glue up $\phi_{k,t}$'s to obtain a diffeomorphism $\Phi_1$ on $\pd M\times\left[1, 1+4\left(L+4r\right)\left(N+1\right)\right]$. 
Note that $\Phi_1, \Phi_1^{-1}$ have all derivatives uniformly bounded from above and below, as the domain is compact. Also note that $\Phi_1$ is the identity on $\pd M\times[1, 1+\epsilon]$ and $\pd M\times[1+4\left(L+4r\right)\left(N+1\right)-\epsilon, 1+4\left(L+4r\right)\left(N+1\right)]$. This enables us to extend $\Phi_1$ periodically to some $\Phi_2$ on $M$, by defining on $\pd M\times[1+4t(L+4r)(N+1), 1+4(t+1)4t(L+4r)(N+1)]$, for each $t\in \mathbb{N}_0$,
\begin{equation}
\Phi_2=\gamma^{-1}(t)^*\Phi_1\gamma(t)^*.
\end{equation}
Let $\psi_3=\Phi_2^*\psi_2$, whose critical points are
\begin{equation}
p'_{k, t}=\left(p_k, 1+2\epsilon+2t\left(L+4\omega\right)\left(N+1\right)+2k\left(L+4\omega\right)\right).
\end{equation}
for each $t\in\mathbb{N}_0$, each $k=1,\dots, N$. Renumber those critical points by $p'_m$. We remark that any two critical points of ${\psi_3}$ are separated by distance of at least $2\left(L+4\omega\right)$. We also note for each $R>0$ one has $\abs{\nabla_{g_0}\psi_3}\ge C$ outside $\bigcup_m B(p'_m, R)$ for constant $C>0$ only depending on $R$, because $\psi_3$ is still periodic.

3. Finally we modify the weight function in uniform radius balls around critical points to obtain \eqref{S3L1}. Note that the balls
\begin{equation}\label{S3L3}
\bar{B}\left(p'_{m}, L+3\omega\right) \cap \bar{B}\left(p'_{m'}, L+3\omega\right)=\emptyset
\end{equation}
for any pair of critical points $p'_{m}$ and $p'_{m'}$. By the Network Control Condition, in each ball $B(p'_{m}, L+2\omega)$ we can find some $x_{m}$ in the network such that
\begin{equation}
\bar{B}\left(x_{m}, \omega\right)\subset B\left(p'_{m}, L+2\omega\right),
\end{equation}
and $a\ge 2\beta$ on $\bar{B}(x_{m}, \omega)$. Now in each ball $B(p'_{m}, L+3\omega)$, find a diffeomorphism $\phi'_{m}$ such that
\begin{equation}\label{S3L4}
\begin{cases}
\phi_{m}': x_m\mapsto p'_m\\
\phi_{m}'=\id,~\textrm{on}~B\left(p'_m, L+3\omega\right)\setminus B\left(p'_{m}, L+2\omega\right)
\end{cases}.
\end{equation}
Glue up $\phi'_{m}$'s to get a diffeomorphism $\Phi_3$ on $M$. We remark here that we can make this construction uniform in the sense that both $\Phi_3, \Phi_3^{-1}$ are in $C_b^{\infty}(M)$, as in \cite{bj16,rm16}. Therefore we have $B(p'_{m}, R)=B(\Phi_3\left( x_{m}\right), R)\subset \Phi_3\left(B\left(x_{m}, \omega\right)\right)$ for some $R>0$ uniform in all $m$. Now set $\psi=\Phi^*_3{\psi_3}$. We know $\abs{\nabla_{g_0}{\psi_3}}\ge  C>0$ uniformly for all $p\notin \cup_{m}B(p'_{m}, R)$. Hence for any $x\notin \bigcup_m B(x_m, \omega)$, we have $\Phi_3(x)\notin \bigcup_m B(p_m', R)$, and again by the boundedness of $d\Phi_3^{-1}$ we have $\abs{\nabla_{g_0} \psi}\ge 2\rho>0$ for some uniform $\rho$. As in \eqref{S3L9}, $\bigcup_m B(x_m, \omega)\subset \Omega_{2\beta}$, the claim holds. 

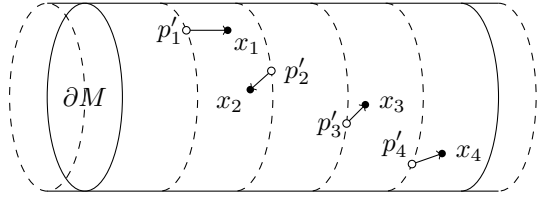
\begin{figure}
\begin{tikzpicture}[minimum size=0.01cm]
\draw (0,0) ellipse (0.5 and 1.25) node {$\partial M$};
\draw (0, -1.25) -- (5, -1.25);

\draw [dashed] (1, 1.25) arc (90:270:-0.5 and 1.25);
\draw [dashed] (2, 1.25) arc (90:270:-0.5 and 1.25);
\draw [dashed] (3, 1.25) arc (90:270:-0.5 and 1.25);
\draw [dashed] (4, 1.25) arc (90:270:-0.5 and 1.25);

\draw (5, 1.25) arc (90:270:-0.5 and 1.25);
\draw (0,1.25) -- (5,1.25);  

\draw [dashed] (-0.5,0) ellipse (0.5 and 1.25);
\draw (-0.5, -1.25) -- (0, -1.25);
\draw (-0.5, 1.25) -- (0, 1.25);

\draw [dashed] (5.5, 1.25) arc (90:270:-0.5 and 1.25);
\draw (5, -1.25) -- (5.5, -1.25);
\draw (5, 1.25) -- (5.5, 1.25);

\draw [->] (1.35, 0.89) node [circle, minimum size=1mm, inner sep=0, draw=black, fill=white,label={[label distance=-0.13cm]left:{$p_{1}'$}}] {} -- (1.9, 0.89) node [circle, minimum size=1mm, inner sep=0,fill=black, label={[label distance=-0.1cm]345:{$x_1$}}] {};
\draw [->] (2.48, 0.35) node [circle, minimum size=1mm, inner sep=0, draw=black, fill=white,label={[label distance=0cm]right:{$p_{2}'$}}] {} -- (2.2, 0.1) node [circle, minimum size=1mm, inner sep=0,fill=black, label={[label distance=-0.1cm]225:{$x_2$}}] {};
\draw [->] (3.48, -0.35) node [circle, minimum size=1mm, inner sep=0, draw=black, fill=white,label={[label distance=-0.13cm]left:{$p_3'$}}] {} -- (3.73, -0.1) node [circle, minimum size=1mm, inner sep=0,fill=black, label={[label distance=0cm]right:{$x_3$}}] {};
\draw [->] (4.35, -0.89) node [circle, minimum size=1mm, inner sep=0, draw=black, fill=white,label={[label distance=-0.13cm]155:{$p_4'$}}] {} -- (4.75, -0.75) node [circle, minimum size=1mm, inner sep=0,fill=black, label={[label distance=0cm]right:{$x_4$}}] {};
\end{tikzpicture}
\caption{$\Phi_3$ moves the critical points of $\psi_2$ into the sufficiently damped balls.}
\label{F4}
\end{figure}
\end{proof}
\begin{remark}\label{S3T2}
The radial stretch $\Phi_2$ is necessary here to pull the critical points sufficiently apart. Otherwise the balls $\bar B(p'_m, L+3\omega)$'s in \eqref{S3L3} may not be disjoint and the construction of the diffeomorphism $\Phi_3$ fails in \eqref{S3L4}.
\end{remark}
What makes this construction above interesting is that it is global on each cylindrical end, similar to the flavour of that on $\setr^d$ in \cite{bj16}. So it only takes a single weight function to control the whole end. However, it still relies much on the homogeneity of the space along the radial direction. Once we allow the scaling functions $\theta_k$'s to grow as $r\rightarrow\infty$, for example, on conic ends, this construction stops working, technically because there is no ideal way of constructing a product-type $\tilde\psi_2$ in \eqref{S3L2}. This constraint on subconic ends is removed by introducing a finite collection of weights.

\begin{lemma}[Subconic ends]\label{S3T3}
Consider a sub-conic end $(M_k, g_0)$, that is $M_k=\pd M_k\times(1,\infty)_r$ endowed with the metric $g_0=dr^2+\theta^2_k(r)h$, where $h$ is a smooth metric on closed $\pd M_k$, and $\theta_k(r)$ as described in \eqref{S1L1}. There exists some $R\ge1$ and let $M_R=\pd M_k\times(R,\infty)_r$, and for some finite $n\ge 1$ there exist $\psi_1,\dots, \psi_n \in C^\infty_b(M_R, \setr)$ that each $0\le \psi_k\le 3$, with a constant $\rho>0$, such that for all $k$, at each point $x\in M_R\setminus \Omega_{2\beta}$ with $\abs{\nabla_{g_0}\psi_k(x)}<2\rho$, there is some $l$ depending on $x$, such that $\psi_l(x)\ge 1$ and
\begin{equation}
\abs{\nabla_{g_0}\psi_l(x)}\ge2\rho,\quad \psi_l(x)\ge\psi_k(x)+1/2.
\end{equation}
\end{lemma}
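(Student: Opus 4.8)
The plan is to build the finitely many weights on a sub-conic end by localizing in the radial variable and exploiting that, on any bounded band of $r$, the geometry is comparable to that of a cylinder, so the cylindrical construction of Lemma~\ref{S3T1} applies locally. Concretely, I would first fix $R\ge 1$ large enough that on $M_R$ the scaling function $\theta_k$ is monotone increasing with bounded derivatives (from \eqref{S1L1}), and then choose an increasing sequence $R=r_0<r_1<r_2<\cdots\to\infty$ so that each band $B_j=\pd M_k\times[r_j,r_{j+2}]$ has radial width comparable to a fixed multiple of $L+\omega$ and so that on $B_j$ the rescaled metric $\theta_k(r)^{-2}g_0$ is uniformly (in $j$) bi-Lipschitz to a product metric $dr'^2+h$ via $r'=\int_{r_j}^r \theta_k(s)\,ds$ or a similar reparametrization. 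The point of the two-step overlap $r_{j+2}$ is that the odd-indexed bands and the even-indexed bands each form a disjoint family, so I can group them into a fixed finite number of colours.

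Next I would, on each band, run the argument of Lemma~\ref{S3T1} (Morse function on $\pd M_k$ times a radial oscillation of period $\sim L+4\omega$ in the $r'$ variable, then radial stretch $\Phi_2$, then diffeomorphism $\Phi_3$ moving critical points into the network balls $B(x_m,\omega)$, which is possible by the Network Control Condition and \eqref{S3L9}), producing a weight that is bounded between $1$ and $3$ on the band, has gradient bounded below by $2\rho$ outside $\bigcup_m B(x_m,\omega)\subset\Omega_{2\beta}$ on that band, and equals a fixed constant (say $1$) near both radial ends of the band so that it glues smoothly to the constant function $0$ (or rather, is supported in a neighbourhood of the band). Summing, within each colour class, the weights over all bands of that colour gives one $C^\infty_b$ function on $M_R$; doing this for each colour yields finitely many weights $\psi_1,\dots,\psi_n$ (with $n$ the number of colours, plus possibly one extra identically-constant weight to handle the central region). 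Uniform bounds on all derivatives follow because the bands are uniformly bi-Lipschitz to a fixed model and the diffeomorphisms are constructed uniformly, exactly as cited from \cite{bj16,rm16}.

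For the compatibility conclusion: at a point $x\in M_R\setminus\Omega_{2\beta}$, $x$ lies in some band $B_j$, and for the colour of that band the corresponding weight $\psi_l$ has $\abs{\nabla_{g_0}\psi_l(x)}\ge 2\rho$ and $\psi_l(x)\ge 1$ by construction (since away from $\Omega_{2\beta}$ the local weight is $\ge 1$). Any weight $\psi_k$ with $\abs{\nabla_{g_0}\psi_k(x)}<2\rho$ must be a weight of a different colour, hence near $x$ it is either identically $0$ or identically a small constant ($\le 1/2$, arranged by scaling the ``plateau'' value), so $\psi_l(x)\ge 1\ge \psi_k(x)+1/2$. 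This is precisely the required gap. The only care needed is to normalize the plateau/background value of each local weight to be $\le 1/2$ while keeping the active range in $[1,3]$, which costs nothing.

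The main obstacle I expect is the uniform bi-Lipschitz control of the bands: I need the radial reparametrization and the rescaling by $\theta_k(r)$ to turn each band into a uniformly bounded-geometry copy of a fixed cylinder band, with all the derivative bounds in the diffeomorphisms $\Phi_2,\Phi_3$ uniform in $j$, and in particular I need the width of $B_j$ in the reparametrized variable to be essentially constant so that a single period length $\sim L+4\omega$ of the radial oscillation fits inside every band. Getting the bookkeeping right so that ``away from $\Omega_{2\beta}$'' really forces $\psi_l\ge 1$ at the \emph{global} level (not just on the band), including the overlaps between consecutive bands and the transition to the cutoff regions, is the delicate part; the growth hypothesis $\abs{\pd_r^m\theta_k}\le C_m$ together with $\theta_k\to\infty$ is exactly what makes the reparametrization have the needed uniformity, and subconicity (rather than faster growth) is what keeps the band widths from collapsing.
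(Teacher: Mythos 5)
There is a genuine gap at the core of your plan: the claim that each band $B_j=\pd M_k\times[r_j,r_{j+2}]$ is \emph{uniformly in $j$} bi-Lipschitz (after a radial reparametrization) to a fixed cylinder band over $(\pd M_k,h)$ cannot hold. The band $B_j$ contains cross-sections whose $g_0$-diameter is comparable to $\theta_k(r_j)\operatorname{diam}_h(\pd M_k)\to\infty$, while the model band has bounded diameter, so any identification must have distortion at least of order $\theta_k(r_j)$. If instead you conformally rescale by $\theta_k^{-2}$ (or reparametrize so the band becomes a bounded-geometry cylinder piece), the constants are no longer the ones you need: a gradient lower bound $c$ in the rescaled metric gives only $\abs{\nabla_{g_0}\psi}\gtrsim c/\theta_k(r_j)\to 0$, and the damping balls $B_{g_0}(x_m,\omega)$ shrink to radius $\sim\omega/\theta_k(r_j)$ in the rescaled picture, so no uniform $\rho$ survives. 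Concretely, running the Lemma \ref{S3T1} construction on a far-out band produces (up to the bi-Lipschitz identification) a product-type weight $\psi_0(y)\psi_1(r)$, whose cross-sectional gradient in $g_0$ is $O(\theta_k(r)^{-1})$; hence at radii where $\psi_1'$ vanishes, \emph{entire cross-sections} of unbounded diameter, on which the weight is near its maximum $3$, lie in the small-gradient set. These sets can neither be pushed into the $\omega$-balls by a $C^\infty_b$ diffeomorphism (as the isolated critical points were in Lemma \ref{S3T1}) nor be dominated with a $1/2$-gap by the other colour's weight, which there is either plateaued at a constant $\le 1/2<3-1/2$ or itself bounded by $3$. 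This is exactly the obstruction the paper records after Lemma \ref{S3T1}: the cylindrical construction relies on radial homogeneity and ``there is no ideal way of constructing a product-type $\tilde\psi_2$'' once $\theta_k\to\infty$, so a radial-band decomposition with a fixed cross-sectional Morse function cannot give a uniform gradient bound outside $\Omega_{2\beta}$.

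The paper resolves this by decomposing \emph{cross-sectionally rather than radially}: take a finite atlas $\phi^0_k$ of the compact cross-section $\pd M_k$, Euclideanize each chart by $\Phi(y,r)=(\theta(r)y,r)$ --- this map is quasi-isometric with uniform constants precisely because sub-conicity gives $\abs{\pd_r^m\theta_k}\le C_m$ --- and then import the $\setr^d$ lattice-of-cosines weight of \cite{bj16}, so that the degenerate set of each single weight is a uniformly separated set of \emph{points}, movable into the network balls by a $C^\infty_b$ diffeomorphism. The finitely many weights are indexed by the charts (not by radial colours), and the $1/2$-gap in the compatibility condition is produced by a cross-sectional cutoff $\chi$ that kills each weight near its chart boundary while keeping $\chi\le 1/6$ there; the cutoff can be chosen with gradient $\le C_0/72$ only because $\theta(r)\to\infty$ gives the cross-section room for a tempered transition, and the controlling weight at such a point is the one coming from an overlapping chart in which the point is interior, where that weight is $\ge 1$ and fully active. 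Your proposal would need to be reorganized along these lines; as stated, the uniform band-to-cylinder reduction it rests on fails, and with it both the uniform lower bound $2\rho$ and the gap $\psi_l\ge\psi_k+1/2$ far out on the end.
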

\begin{proof}
In this lemma we write $M=M_k$ and $\theta=\theta_k$ for we here only care what is happening on $M_k$.
\begin{figure}
\begin{tikzpicture}[minimum size=0.01cm]
\draw (0,0) ellipse (0.125 and 0.625);
\draw (0, -0.625) -- (1, -1.25);

\draw (1, 1.25) arc (90:270:-0.25 and 1.25);
\draw (0,0.625) -- (1,1.25);  

\draw (0.096, 0.4) -- (1.192, 0.8);
\draw (0.096, -0.4) -- (1.192, -0.8);

\draw (3, -0.3125) -- (3, 0.3125);
\draw (5, -1.25) -- (5, 1.25);
\draw (3, -0.3125) -- (5, -1.25);
\draw (3, 0.3125) -- (5, 1.25);

\draw (3.25,0) node [circle, minimum size=0.5mm, inner sep=0,fill=black] {};
\draw (3.75,0) node [circle, minimum size=0.5mm, inner sep=0,fill=black] {};
\draw (4.25,0) node [circle, minimum size=0.5mm, inner sep=0,fill=black] {};
\draw (4.75,0) node [circle, minimum size=0.5mm, inner sep=0,fill=black] {};

\draw (3.75,-0.5) node [circle, minimum size=0.5mm, inner sep=0,fill=black] {};
\draw (3.75,0.5) node [circle, minimum size=0.5mm, inner sep=0,fill=black] {};
\draw (4.25,-0.5) node [circle, minimum size=0.5mm, inner sep=0,fill=black] {};
\draw (4.25,0.5) node [circle, minimum size=0.5mm, inner sep=0,fill=black] {};
\draw (4.75,-0.5) node [circle, minimum size=0.5mm, inner sep=0,fill=black] {};
\draw (4.75,0.5) node [circle, minimum size=0.5mm, inner sep=0,fill=black] {};
\draw (4.75,-1) node [circle, minimum size=0.5mm, inner sep=0,fill=black] {};
\draw (4.75,1) node [circle, minimum size=0.5mm, inner sep=0,fill=black] {};

\draw (0.266,0) node [circle, minimum size=0.5mm, inner sep=0,fill=black] {};
\draw (0.547,0) node [circle, minimum size=0.5mm, inner sep=0,fill=black] {};
\draw (0.828,0) node [circle, minimum size=0.5mm, inner sep=0,fill=black] {};
\draw (1.110,0) node [circle, minimum size=0.5mm, inner sep=0,fill=black] {};

\draw (0.526,0.419) node [circle, minimum size=0.5mm, inner sep=0,fill=black] {};
\draw (0.815,0.363) node [circle, minimum size=0.5mm, inner sep=0,fill=black] {};
\draw (1.100,0.332) node [circle, minimum size=0.5mm, inner sep=0,fill=black] {};
\draw (1.068,0.663) node [circle, minimum size=0.5mm, inner sep=0,fill=black] {};

\draw (0.526,-0.419) node [circle, minimum size=0.5mm, inner sep=0,fill=black] {};
\draw (0.815,-0.363) node [circle, minimum size=0.5mm, inner sep=0,fill=black] {};
\draw (1.100,-0.332) node [circle, minimum size=0.5mm, inner sep=0,fill=black] {};
\draw (1.068,-0.663) node [circle, minimum size=0.5mm, inner sep=0,fill=black] {};

\draw [->] (1.5,0) -- (2.7,0);
\draw (3.5, -1.5) node [label={{$E$}}] {};
\draw (0.3, -1.5) node [label={{$M$}}] {};
\draw (2.1,-0.2) node [label={{$\Phi\circ\Phi_k$}}] {};
\end{tikzpicture}
\caption{Placement of the critical points pulled back to $M$.}
\label{F5}
\end{figure}

1. We start by quasi-isometrically reducing the underlying geometry to an unbounded subset of $\setr^d$. As $\pd M$ is compact, it possesses a finite cover
\begin{equation}
\pd M\subset\bigcup_{k=1}^n (\phi^0_k)^{-1}(B(0,1)),
\end{equation}
such that each $\phi^0_k: \pd M \supset (\phi^0_k)^{-1}(B\left(0,1\right))\rightarrow B\left(0,1\right)\subset\setr^{d-1}$ is a $C_b^\infty$-diffeomorphism. For convenience, denote for $0\le \upsilon\le 1$, 
\begin{equation}
\Sigma_{\upsilon}=B(0,1- \upsilon), \quad \Sigma_{\upsilon}^c=B(0,1)\setminus B(0,1- \upsilon),
\end{equation}
where $\Sigma_{\upsilon}$ consists of points of distance more than $\upsilon$ away from the unit sphere, $\Sigma_{\upsilon}^c$ is the $\upsilon$-neighbourhood of the unit sphere, and $\Sigma_0$ is the unit open ball. Here, as the covers are open, one can fix a small $\epsilon >0$ such that for each $k$, for each $x$ such that $\phi^0_k(x)\in \Sigma^c_{4\epsilon}$, there is some $l$ such that $x\in (\phi^0_l)^{-1}\left(\Sigma_{4\epsilon}\right)$. Denote the model space by $(D,g_D)$ where
\begin{equation}
D=\Sigma_0\times(1,\infty)_r, \quad g_D=\theta^2(r)dy^2+dr^2.
\end{equation}
Construct diffeomorphisms $\Phi_k=\phi^0_k \otimes\id_r$ and observe that $\{\Phi_k^{-1}(D)\}$ covers $\pd M\times (1,\infty)$. Each map $\Phi_k$ is uniformly quasi-isometric with constants $C_1^+,C_1^->0$ such that
\begin{equation}
C_1^-d_{M}\left(x, x'\right)\le d_{D}\left(\Phi_k\left(x\right), \Phi_k\left(x'\right)\right)\le C_1^+d_{M}\left(x, x'\right),
\end{equation}
for any $x,x'\in M$. Consider $\Phi(y, r)=(\theta(r) y, r)$ for
\begin{equation}
\Phi: \left(D, g_D\right) \rightarrow \left(E, g_{\setr^d}\right), \quad E=\left\{(z',z_d=r): r\ge 1, \abs{z'}\le \theta(z_d)\right\}\subset \setr^d.
\end{equation}
This is a quasi-isometric $C_b^\infty$-diffeomorphism, that for any $(y, r), (y', r')\in D$ we have
\begin{equation}
C_2^-d_{D}\left((y, r), (y', r')\right)\le d_{E}\left(\Phi \left((y, r)\right),\Phi \left((y', r')\right)\right)\le C_2^+d_{D}\left((y, r), (y', r')\right).
\end{equation}
To verify the $C_b^\infty$ nature of $\Phi$, it suffices to first pull back $(D, g_D)$ to $(E, g_D')$, and then verify that the Christoffel symbols on $(E, g_D')$ are $C_b^\infty$ on $E$ as a subset of $\setr^d$. We omit the trivial computation here. Note that each $\Phi\circ\Phi_k$ is a $C^\infty_b$-diffeomorphism from $\phi_k^{-1}(D)$ to $E$ quasi-isometric with constants $C^\pm=C^\pm_1 C^\pm_2$. 

2. Now we construct on each $E\subset\setr^d$, a weight function with vanishing gradients exactly inside the damping balls given in the Network Control Condition. On $\setr^d$ we construct for $k=1,\dots, n$, 
\begin{equation}
\psi_k^0(z', z_d)=\cos\left(\frac{\pi z_d-2\pi \left(L+4\omega\right)kC^+}{2n\left(L+4\omega\right)C^+}\right)\prod_{j=1}^{d-1}\cos{\left(\frac{\pi z_j}{2\left(L+4\omega\right)C^+}\right)}+2,
\end{equation}
whose critical points are
\begin{equation}
p_m=\left(2C^+(L+4\omega)m', 2 C^+ (L+4\omega)(n m_d+k)\right)
\end{equation}
for all $m=(m', m_d)\in \mathbb{Z}^d$. See Figure \ref{F5}. Note that any two such critical points are of distance at least $2C^+(L+4\omega)$, measured in $\setr^d$. Set $R_0\ge1$ be the smallest constant such that for all $r\ge R_0$, we have $\theta(r)> C_1^- (L+4\omega)/\epsilon$. This lower bound on the radius guarantees that any point in $\Phi_k^{-1}(\Sigma_\epsilon\times(R_0,\infty))$ is of distance larger than $L+4\omega$ from the cross-sectional boundary $\Phi_k^{-1}(\pd\Sigma_0\times (R_0, \infty))$, measured in $M$. By the Network Control Condition, for all critical points $p_m$'s that are inside $\Phi(\Sigma_\epsilon\times(R_0,\infty))\subset E$ there exists at least a $x_m$ such that $B(x_m, \omega)\subset B((\Phi\circ\Phi_k)^{-1}(p_m), \left(L+2\omega\right))$ and $a\ge 2\beta$ on $B(x_m, \omega)$. Here
\begin{multline}
\Phi\circ\Phi_k\left(B(x_m, \omega)\right)\subset\Phi\circ\Phi_k\left(B((\Phi\circ\Phi_k)^{-1}(p_m), \left(L+2\omega\right))\right)\\
\subset B(p_m, C^+ \left(L+2\omega\right))
\end{multline}
are disjoint balls around $p_m$'s of some uniform radius. Hence via a process similar to the construction of the diffeomorphism $\Phi_3$ in the proof of \ref{S3T1}, we can find a $C^\infty_b$-diffeomorphism $\tilde\Phi_k$ on $E$, equal to the identity on $E\setminus\bigcup_m B(p_m, C^+(L+2\omega))$, such that $\tilde\Phi_k: \Phi_k(x_m)\mapsto p_m$. Set $\psi_k^1=\tilde\Phi_k^*\psi_k^0$, whose critical points in $\Phi(\Sigma_\epsilon\times (R_0,\infty))$ are a subset of $\{\Phi_k (x_m)\}$. Set $\psi_k^2=\Phi^*\psi_k^1$. This is a function defined on $\Sigma_0\times(R_0,\infty)\subset D$. Note that $1\le\psi^2_k\le 3$. 

3. We now very carefully cut off the part of $\psi_k^2$ within a small neighbourhood of $\pd\Sigma_0\times(R_0,\infty)$, and pull back and extend it to weight functions on $M_R$ for some $R\ge R_0$. Observe that away from $\Phi_k(\bigcup_n B(x_m, \omega))$ one has $\abs{\nabla_{g_D}\psi_k^2}\ge C_0$ for some small $C_0$. Set $R\ge R_0$ to be that for all $r\ge R$, $\theta(r)\ge 36/C_0$. Construct a cross-sectional cutoff $\chi\in C_c^\infty(\Sigma_0)$ such that $\chi(y)=0$ on $\Sigma_\epsilon^c$, greater than $1/12$ on $\Sigma_{2\epsilon}$, less than $1/6$ on $\Sigma_{2\epsilon}^c$, and identically $1$ on $\Sigma_{3\epsilon}$. Moreover we ask $\abs{\nabla_{g_D} \chi}\le C_0/72$. See Figure \ref{F1}. Note that we can find such a cutoff because $R$ is taken large enough to give the cross-section enough space to accommodate the tempered decay. Let the weight functions on $\Phi_k^{-1}(\Sigma_0\times(R,\infty))\subset M_R$ be $\psi_k=\Phi_k^*(\chi(y)\psi^2_k)$. As $\chi(y)\psi_k^2$ is identically zero near $\pd\Sigma_0\times (R,\infty)$, we extend $\psi_k$ to all of $M_R$ by $0$. Note that in general $0\le \psi_k\le 3$, and specifically on $\Phi_k^{-1}(\Sigma_{4\epsilon}\times(R,\infty))$ we have $1\le \psi_k\le 3$ .
\begin{figure}[tb]
\resizebox{\linewidth}{!}{
\begin{tikzpicture}
\pgfplotsset{%
every x tick/.style={black, thick},
every y tick/.style={black, thick},
every tick label/.append style = {font=\footnotesize},
every axis label/.append style = {font=\footnotesize},
compat=1.12,
width=25cm,
height=8cm
  }
\begin{axis}[xmin=0, xmax=4.5, ymin=0, ymax=1.2,
 xtick = {0,1,2,3,4}, xticklabels={$\partial\Sigma_0$,$\epsilon$,$2\epsilon$,$3\epsilon$,$4\epsilon$},ytick = {0.125,0.25,1}, yticklabels={1/12,1/6,1},
 scale=0.4, restrict y to domain=-1.5:1.2,
 axis x line=bottom, axis y line= left,
 samples=40]
\addplot[black, samples=100, smooth, domain=-0.2:4.5, thick]
   plot (\x, {0.5*(1+tanh(3.5*(\x-2.2)))});
\draw [dashed] (0,1/8) -- (4.5,1/8);
\draw [dashed] (0,1/4) -- (4.5,1/4);
\draw [dashed] (0,1) -- (4.5,1);
\draw [dashed] (2,0) -- (2,1.2);
\draw [dashed] (3,0) -- (3,1.2);
\end{axis}
\end{tikzpicture}}
\caption{Behaviour of $\chi(y)$ near $\pd\Sigma_0$.}
\label{F1}
\end{figure}
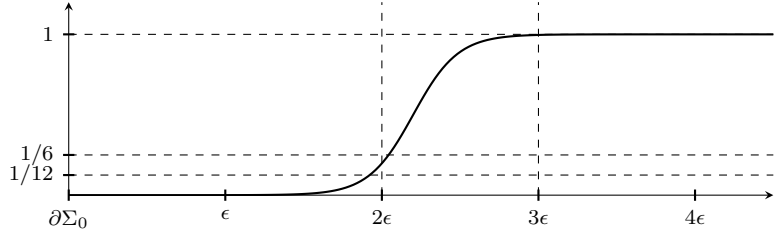

4. We claim that $\psi_k$'s meet our requirement listed in the statement. There is a lower bound for the pushforward map $\|d\Phi_k\|\ge C_1$. Hence we have
\begin{equation}
\abs{\nabla_{g_0}\psi_k}\ge C_1 \abs{\nabla_{g_D}\chi\psi^2_k}.
\end{equation}
Fix $k$ and some point $x\in M_R$ while $x \notin \bigcup_n B(x_m, \omega)$. Note this means $\Phi_k(x)\in \Sigma_0\times(R, \infty)$ and $\Phi_k(x)\notin \Phi_k(\bigcup_n B(x_m, \omega))$. Now set $2\rho=C_0C_1/24$. There are three circumstances depending where $\Phi_k(x)$ is. 

(a) If $\Phi_k(x)\in \Sigma_{3\epsilon}\times(R,\infty)$, the cross-sectional cutoff $\chi$ is identically $1$. We have 
\begin{equation}
\abs{\nabla_{g_D}\chi\psi_k^2}=\abs{\nabla_{g_D}\psi_k^2}\ge C_0,
\end{equation}
and therefore $\abs{\nabla_{g_0}\psi_k}\ge C_0C_1=48\rho\ge 2\rho$. 

(b) If $\Phi_k(x)\in (\Sigma_{2\epsilon}\bigcap\Sigma^c_{3\epsilon})\times(R,\infty)$, the cross-sectional cutoff $\chi>1/12$ is sufficiently large. We have 
\begin{equation}
\abs{\nabla_{g_D} \chi\psi^2_k}\ge \abs{\chi}\abs{\nabla_{g_D} \psi^2_k}-\abs{\psi^2_k}\abs{\nabla_{g_D} \chi}\ge C_0/12-3C_0/72=C_0/24. 
\end{equation}
Here we used the fact that $\psi_k$ is bounded from above by 3. Therefore $\abs{\nabla_{g_0}\psi_k}\ge C_0C_1/24=2\rho$. 

(c) If $\Phi_k(x)\in \Sigma_{2\epsilon}^c\times(R,\infty)$, there is some $l\le n$ such that $\Phi_l(x)\in \Sigma_{4\epsilon}\times(R,\infty)$. From the circumstance (a), we know that $\abs{\nabla_{g_0}\psi_l(x)}\ge 2\rho$, and 
\begin{equation}
\psi_k(x)=\chi\psi^2_k(\phi_k(x))\le 1/2= 1-1/2\le \psi_l(x)-1/2.
\end{equation}
Here we used the fact that $\chi\le 1/6, \psi_k^2\le 3$ and $\psi_l\ge 1$.

The claim has been concluded as above. 
\end{proof}
\begin{remark}
\begin{enumerate}[wide]
  \item We note that the argument is sharp for conic ends, where $\theta(r)=r$. If $\theta'(r)$ is not uniformly bounded, then $\Phi$ loses the quasi-isometric nature, and the argument needs further modifications.
  \item This argument relies, twice when setting up $R_0$ and $R$, on the fact that the cross-sectional space is expanding as $r\rightarrow\infty$. Large $R_0$ makes sure that the $\Phi_k^{-1}(\Sigma_\epsilon\times(R_0, \infty))$ is sufficiently apart from $\Phi_k^{-1}(\pd\Sigma_0\times(R_0, \infty))$, so the critical points inside $\Phi_k^{-1}(\Sigma_\epsilon\times(R_0, \infty))$ will not be pulled to some $x_m$ out of the charted region $\Phi_k^{-1}(\Sigma_0\times(R_0, \infty))$. As in the cylindrical case in Lemma \ref{S3T1} the cross-sectional space is not expanding, this argument does not immediately apply to the cylindrical case. 
\end{enumerate}
\end{remark}

Up to this point, on $(M,g_0)$ we have constructed either a weight function on a cylindrical end, or a finite collection of weight functions on a subconic end, compatible on the end in the way described in Lemma \ref{S3T3}. Now our next proposition provides the final modification of those weights to pull them back to $(M,g)$. 

\begin{proposition}[Construction of Carleman weights]\label{S3T4}
On the prespecified $(M, g)$, there are Carleman weights $\psi_0, \dots, \psi_n\in C_b^\infty(M)$ compatible with control from $(\Omega_{\beta},\Omega_{2\beta})$, in the sense of \eqref{S2LL20}.
\end{proposition}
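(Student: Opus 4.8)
The plan is to build the family on the model manifold $(M,g_0)$ of \eqref{S1L9} and transport it to $(M,g)$ through $\Phi_0$, which costs nothing: $\Phi_0$ is the identity on points, so pulling a function back leaves its values unchanged; $d\Phi_0$ is bounded above and below, so $\abs{\nabla_g\psi}$ and $\abs{\nabla_{g_0}\psi}$ are comparable and a large gradient stays large with a rescaled $\rho$; $\Phi_0,\Phi_0^{-1}$ are $C^\infty_b$-maps, so $C^\infty_b$-regularity is preserved; and $\Phi_0(\Omega_\upsilon)=\Omega_\upsilon$. We may also note that the cutoff $\chi$ required in Theorem \ref{S2T2} for the pair $(\Omega_\beta,\Omega_{2\beta})$ is available: take $\chi=\phi(a)$ with $\phi\in C^\infty(\setr;[0,1])$ equal to $1$ on $(-\infty,\beta]$ and to $0$ on $[2\beta,\infty)$. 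On $(M,g_0)$ the family will consist of the end weights of Lemma \ref{S3T1} (cylindrical ends) and Lemma \ref{S3T3} (sub-conic ends), each first multiplied by a radial cutoff so that it extends by zero across a collar to a function on all of $M$, together with one extra weight $\psi_0$ controlling $\overline{M_0}$ and the ``handoff'' annuli between $\overline{M_0}$ and the ends.

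First I would build $\psi_0$. Fix for each end a threshold radius $\Theta_k$ so large that the end weight(s) of Lemma \ref{S3T1} or \ref{S3T3} are defined and unmodified on $\pd M_k\times(\Theta_k,\infty)$, and set $K=\overline{M_0}\cup\bigcup_k\pd M_k\times[1,\Theta_k+3]$, a compact manifold with boundary. Take a Morse function on $K$; spread its critical points pairwise apart and away from $\pd K$ by a diffeomorphism of $K$ supported in the interior; then push each critical point into one of the damping balls $B_{g_0}(x_m,\omega)$ (which lie in $\Omega_{2\beta}$ by \eqref{S3L9}) by a diffeomorphism supported in a ball of radius $L+2\omega$ about it, mimicking the construction of $\Phi_3$ in the proof of Lemma \ref{S3T1}; this is possible because every point of $K$ is within $L$ of the network, so the spread-out critical points admit disjoint such support balls. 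Calling the result $\psi_0^\sharp$ and rescaling affinely, I may assume $\psi_0^\sharp$ takes values in a narrow band $[c_-,c_+]$ with $\abs{\nabla_{g_0}\psi_0^\sharp}\ge2\rho_0>0$ off $\Omega_{2\beta}$, where $0<c_-<c_+$ are small constants fixed below. Extend $\psi_0^\sharp$ to a global $\psi_0$ by a radial interpolation to a constant: on $\pd M_k\times[\Theta_k+2,\Theta_k+3]$ put $\psi_0=f(r)\psi_0^\sharp+(1-f(r))c_0$ with $f$ decreasing smoothly from $1$ to $0$ and $c_0\in[c_-,c_+]$, and $\psi_0\equiv c_0$ for $r\ge\Theta_k+3$; being a convex combination, $\psi_0$ still lies in $[c_-,c_+]$, and $\psi_0$ equals the Morse-type $\psi_0^\sharp$ on $\overline{M_0}\cup\bigcup_k\pd M_k\times[1,\Theta_k+2]$.

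Next I would modify the end weights. Multiplying each weight $\psi$ of an end by a radial cutoff $\zeta(r)$ with $\zeta\equiv0$ for $r\le\Theta_k-1$, $\zeta\equiv1$ for $r\ge\Theta_k$ and $\abs{\zeta'}$ as small as we wish --- affordable since $\psi$ is bounded and we may lengthen the transition interval --- gives a function vanishing to all orders at $\pd M_k\times\{\Theta_k-1\}$, hence extending by $0$ to all of $M$; keep the same names. A gentle $\zeta$ makes the $\zeta\nabla\psi$ term dominate the $\psi\zeta'$ term once $\zeta\ge\delta_0$, so that wherever a cut-off end weight has small gradient at a point off $\Omega_{2\beta}$: either $\zeta<\delta_0$ there and its value is $\le3\delta_0$; or (only on a sub-conic end) the underlying weight $\psi$ itself has small gradient, whence Lemma \ref{S3T3} supplies a sibling end weight which, again after multiplying by $\zeta\ge\delta_0$, has large gradient and exceeds the first by at least $\zeta/2\ge\tau$. (On a cylindrical end the second case cannot occur, as Lemma \ref{S3T1} keeps $\abs{\nabla\psi}\ge2\rho$ everywhere off $\Omega_{2\beta}$.) Finally the radii are staggered: $\psi_0=\psi_0^\sharp$ is at full strength on each cutoff annulus $\pd M_k\times[\Theta_k-1,\Theta_k]$, while the cut-off end weights all have $\zeta\equiv1$ --- hence are at full strength and satisfy Lemma \ref{S3T1} or \ref{S3T3} verbatim --- on each interpolation annulus $\pd M_k\times[\Theta_k+2,\Theta_k+3]$ and beyond; and on $\overline{M_0}\cup\bigcup_k\pd M_k\times[1,\Theta_k-1]$ all end weights vanish and $\psi_0^\sharp$ is at full strength. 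So at every point at least one member of the family has large gradient off $\Omega_{2\beta}$.

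It then remains to verify the compatibility condition \eqref{S2LL20} for this family with $\Omega=\Omega_\beta$, $\Omega_0=\Omega_{2\beta}$, $\rho$ the minimum of $\rho_0$, the $\rho$'s of Lemmas \ref{S3T1} and \ref{S3T3}, and a small multiple of $\delta_0\rho$, and a gap $\tau$ so small that $3\delta_0+2\tau<\min(1/12,c_+)$ and $\tau\le\delta_0/2$; simultaneously fix $c_-,c_+$ and $c_0$ in $(3\delta_0+\tau,\,1/12-\tau)$. The check runs over three zones. On $\overline{M_0}$ and the inner collars only $\psi_0$ has non-small gradient, and its value $\ge c_->\tau$ dominates the vanishing end weights. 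On a cutoff annulus $\psi_0$ controls, and its value $\ge c_->3\delta_0+\tau$ dominates any cut-off end weight of small gradient and value $\le3\delta_0$, while any cut-off end weight of small gradient and larger value sits in a Lemma \ref{S3T3} case-(c) region and is dominated there by its sibling. On an interpolation annulus or the deep end the end weights are at full strength, so Lemmas \ref{S3T1} and \ref{S3T3} supply a controlling end weight --- of value $\ge1$, respectively $\ge1/12$ with every small-gradient sub-conic end weight (value $\le1/2$) dominated by one of value $\ge1$ --- and the narrow band keeps $\psi_0$ a full $\tau$ below the value of any such full-strength controller. Pulling the family back through $\Phi_0$ and relabelling the list as $\psi_0,\dots,\psi_n\in C^\infty_b(M)$ (each non-negative, $\psi_0\ge c_->0$ and the end weights $\ge0$) completes the proof. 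I expect the genuinely delicate part to be these last two paragraphs: fixing the nested hierarchy $\delta_0,\tau,c_-,c_+$ and staggering the radii so that no overlap annulus is left uncovered and the value ordering never fails --- in particular so that a cut-off end weight is never simultaneously of small gradient and of a value too large to be dominated by anything available, which is precisely why the cutoffs $\zeta$ must be gentle and $\psi_0$ must be squeezed into a narrow band between $3\delta_0$ and $1/12$.
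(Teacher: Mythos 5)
Your proposal is correct and follows essentially the same route as the paper: end weights from Lemmas \ref{S3T1} and \ref{S3T3}, a Morse-type weight on a compact core with critical points pushed into damping balls, gluing by gentle radial cutoffs over staggered annuli so that smallness of the cut-off gradient forces smallness of the underlying gradient, a zone-by-zone verification of \eqref{S2LL20}, and a final pullback through $\Phi_0$. The only differences are bookkeeping (you squeeze $\psi_0$ into a narrow band and interpolate it to a positive constant, where the paper scales it to $[1/3,2/3]$ and cuts it to zero with explicit constants like $\chi_1\le 1/18$, $\chi_1\ge 1/36$, $\tau=1/72$, $I=432/\rho_1$), and, like the paper's Step 1, you rely on the slightly stronger facts established inside the proofs of Lemmas \ref{S3T1}--\ref{S3T3} (a large-gradient weight of value bounded below at every point off $\Omega_{2\beta}$) rather than on their bare statements.
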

\begin{proof}
1. We start by reviewing what we have learnt from the previous lemmata. Denote $\{x\in \bigcup_k(\pd M_k\times (a, b)_r)\}$ by $(a,b)$, its closure by $[a,b]$, and $M_0\cup(1, b)$ by $\{r<b\}$ as a matter of convenience. Lemma \ref{S3T1} and Lemma \ref{S3T3} state that we have an uniform $R\ge 1$ with a finite family of weights $\psi_1',\dots,\psi_n'$ on the ends $(R,\infty)$, where $0\le \psi_l'\le 3$ on the end $\pd M_{k_l}\times(R,\infty)$ where it is defined, and $1\le \psi_l'\le 3$ specifically on some $U_l\subset \pd M_{k_l}\times(R,\infty)$, with $(R,\infty)\cap \Omega_{2\beta}^c\subset\bigcup_{l=1}^n U_l$. Moreover there exists $\rho_1>0$, such that for each $l$, at $x\in U_l\setminus \Omega_{2\beta}$, one has $\abs{\nabla_{g_0} \psi_l'}\ge \rho_1$, and if for some $k$ we have $\abs{\nabla_{g_0} \psi_k'}<\rho_1$ then $\psi_l\ge \psi_k+1/2$. 

2. On $(M, g_0)$, we start by constructing a weight on the central compactum, to which the ends on which we have the weights are attached. 
Set $I=432/\rho_1$. On $M_0'=\{r\le R+7I\}$ compact, there exists a Morse function $\psi_0^1$ with finitely many non-degenerate critical points, none of which resides on the boundary $\{r=R+7I\}$. Note that this can be achieved by finding a Morse function on a small closed neighbourhood of $M_0'$, for example $\{r\le R+8I\}$, and find a diffeomorphism to move all critical points not on the new boundary $\{r=R+8I\}$ into $\{r<R+7I\}$ and then restrict the new function to $\{r\le R+7I\}$. Apply a diffeomorphism on $M_0'$, to get $\psi_0^2$ where all critical points of $\psi^1_0$ are moved inside some $B(x_0,\omega)$ given by the Network Control Condition. Note that we can assume without loss of generality that there is a such $B(x_0,\omega)$ inside $M_0'$, by increasing $R$ if needed. Now construct
\begin{equation}
\psi_0'=\frac13+\frac13\left(\max{\psi_0^2}-\min{\psi_0^2}\right)^{-1}\left(\psi_0^2- \min{\psi_0^2}\right).
\end{equation}
Note that $\psi_0'\in [1/3,2/3]$ on $M_0'$, and $\abs{\nabla_{g_0} \psi_0'}\ge \rho_0$ for some positive $\rho_0$ away from $B(x_0,\omega)\subset\Omega_{2\beta}$. Now set 
\begin{equation}
2\rho=\min\{\rho_0, \rho_1/72\}.
\end{equation}
We have $\abs{\nabla_{g_0}\psi_0'}\ge 2\rho$ on $M_0'\setminus \Omega_{2\beta}$, and $\rho_1\ge 144\rho$. These are weight functions on $(M,g_0)$. 

3. We now trim the parts of $\psi_0',\dots,\psi_n'$ inside $[R, R+6I]$, where the supports of those weights could intersect, and extend them to the whole $(M,g_0)$ in a compatible manner. Construct two radial cutoff functions $\chi_0$ and $\chi_1$ in $C^\infty_b(M)$. Let $\chi_0(r)$ be non-increasing, $1$ on $\{r\le R+4I\}$, and $0$ on $[R+5I,\infty)$. Let $\chi_1(r)$ be non-decreasing, $0$ on $\{r\le R+I\}$, and $\chi_1\ge 1/36$ on $[R+2I, \infty)$, and $\chi_1\le 1/18$ on $[R, R+2I]$, and constant $1$ on $[R+3I, \infty)$. Meanwhile we ask $\abs{\pd_r\chi_1}\le \rho_1/216$ on $[R,\infty)$, which makes sense as $I=432/\rho_1$ has been taken large enough. See Figure \ref{F2}. Now set 
\begin{equation}
\psi_0=\chi_0\psi_0',\quad \psi_k=\chi_1 \psi_k',
\end{equation}
extended to the whole manifold $M$ by 0. Note $\psi_0,\dots,\psi_n$ are in $C_b^\infty(M)$.

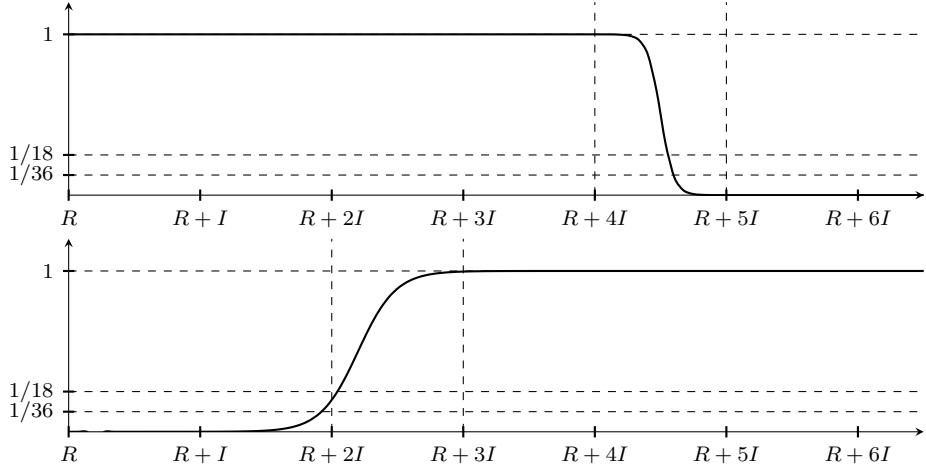
\begin{figure}[tb]
\resizebox{\linewidth}{!}{
\begin{tikzpicture}
\pgfplotsset{%
every x tick/.style={black, thick},
every y tick/.style={black, thick},
every tick label/.append style = {font=\footnotesize},
every axis label/.append style = {font=\footnotesize},
compat=1.12,
width=30cm,
height=8cm
  }
\begin{axis}[xmin=0, xmax=6.5, ymin=0, ymax=1.2,
 xtick = {0,1,2,3,4,5,6}, xticklabels={$R$,$R+I$,$R+2I$,$R+3I$,$R+4I$,$R+5I$, $R+6I$},ytick = {0.125,0.25,1}, yticklabels={1/36,1/18,1},
 scale=0.4, restrict y to domain=-1.5:1.2,
 axis x line=bottom, axis y line= left,
 samples=40]
\addplot[black, samples=100, smooth, domain=-0.2:6.5, thick]
   plot (\x, {0.5*(1+tanh(10*(-\x+4.5)))});
\draw [dashed] (0,1/8) -- (6.5,1/8);
\draw [dashed] (0,1/4) -- (6.5,1/4);
\draw [dashed] (0,1) -- (6.5,1);
\draw [dashed] (4,0) -- (4,1.2);
\draw [dashed] (5,0) -- (5,1.2);
\end{axis}
\end{tikzpicture}}
\resizebox{\linewidth}{!}{
\begin{tikzpicture}
\pgfplotsset{%
every x tick/.style={black, thick},
every y tick/.style={black, thick},
every tick label/.append style = {font=\footnotesize},
every axis label/.append style = {font=\footnotesize},
compat=1.12,
width=30cm,
height=8cm
  }
\begin{axis}[xmin=0, xmax=6.5, ymin=0, ymax=1.2,
 xtick = {0,1,2,3,4,5,6}, xticklabels={$R$,$R+I$,$R+2I$,$R+3I$,$R+4I$,$R+5I$, $R+6I$},ytick = {0.125,0.25,1}, yticklabels={1/36,1/18,1},
 scale=0.4, restrict y to domain=-1.5:1.2,
 axis x line=bottom, axis y line= left,
 samples=40]
\addplot[black, samples=100, smooth, domain=-0.2:6.5, thick]
   plot (\x, {0.5*(1+tanh(3.5*(\x-2.2)))});
\draw [dashed] (0,1/8) -- (6.5,1/8);
\draw [dashed] (0,1/4) -- (6.5,1/4);
\draw [dashed] (0,1) -- (6.5,1);
\draw [dashed] (2,0) -- (2,1.2);
\draw [dashed] (3,0) -- (3,1.2);
\end{axis}
\end{tikzpicture}}
\caption{Behaviour of $\psi_0$ (above) and $\psi_1$ (below) in $[R, R+6I]$.}
\label{F2}
\end{figure}

4. We claim that the $\psi_k$'s satisfy the compatibility conditions \eqref{S2LL20} on $(M,g_0)$ with constant $\tau=1/72$. Keep in mind that $\psi_0\in [1/3,2/3]$ on $\{r\le R+4I\}$. Fix $x\in M\subset \Omega_{2\beta}$. There are three cases.

(a) If $x\in \{r\le R+2I\}$, we have $\chi_0\equiv 1$ and $\chi_1\le 1/18$. For each $k>0$ we have
\begin{equation}
\psi_k=\chi_1\psi_k\le 1/6\le \psi_0-1/72,
\end{equation}
and $\abs{\nabla_{g_0}\psi_0}\ge 2\rho$, no matter how small $\abs{\nabla_{g_0}\psi_k}$ is. Here we used the fact $\psi_k\le 3$ and $\psi_0\ge 1/3$ on $\{r\le R+3I\}$.

(b) If $x\in (R+3I, R+4I]$, we have $\chi_0\equiv 1$ and $1/36\le\chi_1\le 1$. At $x$, for each $k$, if $\abs{\nabla_{g_0}\psi_k'}\ge\rho_1$, then
\begin{equation}\label{S3L6}
\abs{\nabla_{g_0}\psi_k}\ge \chi_1\abs{\nabla_{g_0}\psi_k'}-\psi_k'\abs{\pd_r\chi_1}\ge \rho_1/36-3\rho_1/216=\rho_1/72\ge 2\rho. 
\end{equation}
Here we used that $\psi_k'\le 3$ and $\abs{\pd_r\chi_1}\le \rho_1/216$. Hence if $\abs{\nabla_{g_0}\psi_k}<2\rho$, we have $\abs{\nabla_{g_0}\psi_k'}<\rho_1$. This enables us to invoke the compatibility condition summarised in Step 1. We know now that if $\abs{\nabla_{g_0}\psi_k}<2\rho$, then there is some $l$ such that $x\in U_l\setminus\Omega_{2\beta}$, and at $x$ we have $\psi_l'\ge \psi_k'+1/2$ and $\abs{\nabla_{g_0}\psi_l'}\ge\rho_1$. Moreover $\abs{\nabla_{g_0}\psi_l}\ge2\rho$ and 
\begin{equation}
\psi_l=\chi_1\psi_l'\ge \chi_1\psi_k'+\chi_1/2\ge \psi_k+1/72
\end{equation}
because $\chi_1\ge 1/36$ in this region. 

(c) If $x\in (R+4I, \infty)$, then $\chi_0\le 1$ and $\chi_1\equiv 1$. As in this region we have $\psi_k=\psi_k'$ for each $k>0$, whenever $\abs{\nabla_{g_0}\psi_k}\le 2\rho\le \rho_1$, we have $x\in U_l\setminus \Omega_{2\beta}$ for some $l$ and at $x$ we have $\abs{\nabla_{g_0}\psi_l}\ge \rho_1\ge 2\rho$ and $\psi_l\ge \psi_k+1/2$. Note that no matter how small $\abs{\nabla_{g_0}\psi_0}$ is, we have $\psi_0\le 2/3\le \psi_l-1/72$. This is because $\psi_l\ge1$ on $U_l$. 

5. Up to this point the compatibility conditions \eqref{S2LL20} have been verified on $(M,g_0)$. Now pull back $\psi_k$'s to $(M,g)$ via $\Phi_0$ given in Section \ref{SS12}. Note that $\Phi_0(x)=x$ for each $x\in M$ and therefore one has $\Phi_0(\Omega_{\upsilon})=\Omega_{\upsilon}$ for $\upsilon=\beta,2\beta$, and for $0\le k\le n$, we have $\Phi_0^*\psi_k=\psi_k$ and some constants $C_0>0$ and 
\begin{equation}\label{S3L5}
\abs{\nabla_{g}\Phi_0^*\psi_k}\ge C_0\abs{\nabla_{g_0}\psi_k},
\end{equation}
as $d\Phi_0^{-1}$ is bounded, as in \eqref{S3L8}. Here we directly verify \eqref{S2LL20}, for weights $\Phi_0^*\psi_0,\dots,\Phi_0^*\psi_n$ with constants $2C_0\rho$ and $\tau=1/72$. At each $x\in M\setminus \Omega_{2\beta}$, for each $0\le k\le n$, if we have $\abs{\nabla_g\Phi_0^*\psi_k(x)}< 2C_0\rho$, then by \eqref{S3L5} we know that $\abs{\nabla_{g_0}\psi_k(x)}< 2\rho$. As in Step 4 we have the compatibility conditions on $(M,g_0)$ for $\psi_0,\dots,\psi_n$, there has to be some $l$ such that $\abs{\nabla_{g_0}\psi_l(x)}\ge 2\rho$ and $\psi_l(x)\ge\psi_k(x)+1/72$, and therefore by \eqref{S3L5}, we have
\begin{equation}
\abs{\nabla_{g}\Phi_0^*\psi_l}\ge C_0\abs{\nabla_{g_0}\psi_l}\ge 2C_0\rho, \quad \Phi_0^*\psi_l-\Phi_0^*\psi_k=\psi_l-\psi_k\ge 1/72.
\end{equation}
This concludes the proof.
\end{proof}

\begin{remark}
Similar to the construction in Lemma \ref{S3T3}, we used the fact that we are on an unbounded manifold. Indeed, we need the end to be infinitely long to find a suitable cutoff $\chi_1$, on a compact but huge chunk of the end $[R, R+7I]$, in Step 3. This is to make sure that the entering of the weights on the ends is tempered so it does not dramatically impact the gradient of those weights, as in \eqref{S3L6}. 
\end{remark}

Since we have the compatible weights on $(M,g)$, Theorem \ref{S2T2} holds immediately. 
\begin{proposition}[Global Carleman estimates]\label{S3T5}
On the prespecified $(M, g)$ in Section \ref{SS12}, assume the Network Control Condition $(L,\omega, 2\beta,\{x_m\})$. Then, there exists a constant $C>0$, independent of semiclassical parameter $h>0$ small,
\begin{equation}
\left\|u\right\|_{L^2(M)}\le e^{C/h}\left(\left\|\left(h^2\Delta_g-V(x; h)\right) u\right\|_{L^2(M)}+\left\|u\right\|_{L^2(\Omega_\beta)}\right),
\end{equation}
where $V\in C^\infty_b(M\times[0, h_0])$ is a semiclassical uniformly bounded real potential.  
\end{proposition}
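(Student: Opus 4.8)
The plan is to derive Proposition \ref{S3T5} as an immediate consequence of the abstract estimate in Theorem \ref{S2T2} combined with the weight construction in Proposition \ref{S3T4}. First I would fix the control pair $(\Omega,\Omega_0)=(\Omega_\beta,\Omega_{2\beta})$. Since $a\ge 0$ and $2\beta>\beta>0$, we have $\Omega_{2\beta}\subset\Omega_\beta$, and both sets are open because $a$ is continuous; moreover $(M,g)$ is a manifold of bounded geometry without boundary by the standing assumption of Section \ref{SS12}. The remaining structural hypothesis behind Theorem \ref{S2T2} is the existence of a cutoff $\chi\in C^\infty_b(M)$ with $\chi\equiv 1$ on $M\setminus\Omega_\beta$ and $\chi\equiv 0$ on $\Omega_{2\beta}$. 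I would construct such a $\chi$ by setting $\chi=f\circ a$, where $f\colon\setr\to[0,1]$ is a fixed smooth function with $f\equiv 1$ on $(-\infty,\beta]$ and $f\equiv 0$ on $[2\beta,\infty)$; since $a\in C^\infty_b(M)$ and $f$ has uniformly bounded derivatives, the composition $\chi$ lies in $C^\infty_b(M)$, and the support conditions follow directly from the definitions of $\Omega_\beta$ and $\Omega_{2\beta}$.

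Next I would invoke Proposition \ref{S3T4}: under the Network Control Condition $(L,\omega,2\beta,\{x_m\})$ there exist Carleman weights $\psi_0,\dots,\psi_n\in C^\infty_b(M)$ compatible with the control from $(\Omega_\beta,\Omega_{2\beta})$ in the sense of \eqref{S2LL20}. These weights are non-negative, indeed valued in $[0,3]$, by the construction in Lemmas \ref{S3T1} and \ref{S3T3} and in Step 2 of the proof of Proposition \ref{S3T4}. Relabelling $\{\psi_0,\dots,\psi_n\}$ as a family of $n+1$ non-negative, compatible weights, all hypotheses of Theorem \ref{S2T2} are verified for the present choice of $(\Omega,\Omega_0)$.

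Finally I would apply Theorem \ref{S2T2} verbatim, with $V\in C^\infty_b(M\times[0,h_0])$ the given semiclassical uniformly bounded real potential, to obtain a constant $C>0$, independent of $h\in(0,h_0)$, with
\begin{equation*}
\nor{u}_{L^2(M)}\le e^{C/h}\brac{\nor{\brac{h^2\Delta_g-V(x;h)}u}_{L^2(M)}+\nor{u}_{L^2(\Omega_\beta)}},
\end{equation*}
which is precisely the asserted estimate. There is no substantive obstacle in this argument, as the proposition is essentially a specialisation of Theorem \ref{S2T2}; the only points demanding a short verification are the existence of the cutoff $\chi$ built from the damping $a$, and the admissibility of the weights produced by Proposition \ref{S3T4} (non-negativity, membership in $C^\infty_b(M)$, and compatibility with $(\Omega_\beta,\Omega_{2\beta})$), both of which are routine given the results already established.
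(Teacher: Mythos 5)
Your proposal is correct and follows the same route as the paper: the paper treats Proposition \ref{S3T5} as an immediate consequence of Theorem \ref{S2T2} once Proposition \ref{S3T4} supplies compatible weights for the control pair $(\Omega_\beta,\Omega_{2\beta})$. Your additional verifications (the cutoff $\chi=f\circ a\in C^\infty_b(M)$ and the non-negativity and boundedness of the weights) are exactly the routine points the paper leaves implicit, so there is nothing to correct.
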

We have two corollaries to use later in the proof of exponential and logarithmic decays for damped Klein-Gordon equations. 
\begin{corollary}[High frequency estimates]\label{S3T6}
On the prespecified $(M, g)$ in Section \ref{SS12}, assume the Network Control Condition $(L,\omega, 2\beta,\{x_m\})$. Let
\begin{equation}
P_h=\left(h^2\Delta_g-1\right)+iha+h^2.
\end{equation}
Then, there exists a constant $C>0$, independent of semiclassical parameter $h>0$ small,
\begin{equation}
\left\|u\right\|_{L^2(M)}\le e^{C/h}\left(\left\|P_h u\right\|_{L^2(M)}+\left\|u\right\|_{L^2(\Omega_\beta)}\right).
\end{equation}
\end{corollary}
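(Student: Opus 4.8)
The plan is to deduce Corollary \ref{S3T6} from the global Carleman estimate of Proposition \ref{S3T5} by viewing the damping term $iha$ as a small perturbation of a real semiclassical potential. First I would write
\begin{equation*}
P_h=\brac{h^2\Delta_g-\brac{1-h^2}}+iha,
\end{equation*}
and observe that $V(x;h)=1-h^2$ is an admissible semiclassical uniformly bounded real potential in the sense of Proposition \ref{S3T5}. Applying that proposition with this choice of $V$ yields, for $h\in(0,h_0)$ and some $C>0$,
\begin{equation*}
\nor{u}_{L^2(M)}\le e^{C/h}\brac{\nor{\brac{h^2\Delta_g-\brac{1-h^2}}u}_{L^2(M)}+\nor{u}_{L^2(\Omega_\beta)}},
\end{equation*}
and since $\brac{h^2\Delta_g-\brac{1-h^2}}u=P_hu-ihau$ this gives
\begin{equation*}
\nor{u}_{L^2(M)}\le e^{C/h}\brac{\nor{P_hu}_{L^2(M)}+h\nor{au}_{L^2(M)}+\nor{u}_{L^2(\Omega_\beta)}}.
\end{equation*}

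The only real obstacle is the term $h\nor{au}_{L^2(M)}$: it cannot be absorbed naively, because bounding it by $Ch\nor{u}_{L^2(M)}$ is useless once multiplied by $e^{C/h}$. To recover the missing power of $h$ I would use the dissipative structure. For $u$ in the domain of $\Delta_g$, taking imaginary parts in the energy identity gives $h\nor{\sqrt a\,u}_{L^2(M)}^2=\cim\ang{P_hu,u}\le\nor{P_hu}_{L^2(M)}\nor{u}_{L^2(M)}$, and since $a\ge0$, $\nor{au}_{L^2(M)}^2\le\linnor{a}\nor{\sqrt a\,u}_{L^2(M)}^2$. Combining,
\begin{equation*}
h\nor{au}_{L^2(M)}\le\linnor{a}^{1/2}h^{1/2}\nor{P_hu}_{L^2(M)}^{1/2}\nor{u}_{L^2(M)}^{1/2}.
\end{equation*}

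Finally I would close the estimate with Young's inequality using an $h$-dependent weight: for any $\mu>0$, $h^{1/2}\nor{P_hu}^{1/2}\nor{u}^{1/2}\le\tfrac{1}{2\mu}\nor{P_hu}+\tfrac{\mu h}{2}\nor{u}$. Choosing $\mu=e^{-C/h}$, the coefficient of $\nor{u}_{L^2(M)}$ on the right becomes $\linnor{a}^{1/2}\tfrac{h}{2}e^{C/h}e^{-C/h}=\linnor{a}^{1/2}\tfrac{h}{2}\le\tfrac12$ for $h$ small, so that term is absorbed into the left-hand side; the coefficient of $\nor{P_hu}_{L^2(M)}$ becomes $O(e^{2C/h})$, which is harmless since it only enlarges the exponential constant. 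This produces $\nor{u}_{L^2(M)}\le e^{C'/h}\brac{\nor{P_hu}_{L^2(M)}+\nor{u}_{L^2(\Omega_\beta)}}$ with a larger $C'$, as claimed. The remaining bookkeeping — checking that $V=1-h^2$ is indeed an admissible potential and extending the estimate from $H^2(M)$ (where the energy identity is valid) to all $u$ with finite right-hand side by a routine density argument — presents no difficulty, and I expect the $h$-dependent Young inequality above to be the only delicate point.
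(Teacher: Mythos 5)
Your proposal is correct, and its first step coincides with the paper's: both apply Proposition \ref{S3T5} to the real part of $P_h$ (you take $V=1-h^2$; the paper writes $V=1+h^2$, an immaterial sign slip, the point in either case being that the principal part of the potential is $1$). Where you diverge is in how the damping term is removed. The paper disposes of it in one line by asserting $\left\|\cre\left(P_h\right)u\right\|_{L^2}\le\left\|P_hu\right\|_{L^2}$; since $\cre(P_h)=h^2\Delta_g-(1-h^2)$ and multiplication by $a$ do not commute, expanding $\left\|P_hu\right\|_{L^2}^2$ produces a commutator cross term $ih^3\left\langle\left[\Delta_g,a\right]u,u\right\rangle$ of no definite sign, so that inequality is not literally immediate as stated. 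You instead keep the error term $h\left\|au\right\|_{L^2}$ explicitly and control it through the dissipative identity $\cim\left\langle P_hu,u\right\rangle=h\left\|\sqrt{a}\,u\right\|_{L^2}^2$ (valid for $u\in H^2$ by integration by parts, exactly as the paper itself uses around \eqref{S5L4}, and relying on the standing assumption $a\ge0$), followed by the $h$-dependent Young inequality with weight $\mu=e^{-C/h}$; the price is only an enlarged exponential constant $C'\approx 2C$, which the statement tolerates since $C$ is not specified. So your route is slightly longer but self-contained, and arguably more careful than the paper's at this point; the only inputs beyond the paper's are $a\ge0$ and $a\in L^\infty$, both standing assumptions, and the restriction to $u\in H^2$ is harmless because that is how the corollary is applied in Sections 4 and 5 (and can be removed by the density argument you indicate).
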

\begin{proof}
Let $V(x; h)=1+h^2$. From Proposition \ref{S3T5} we know
\begin{multline}
\left\|u\right\|_{L^2(M)}\le e^{C/h}\left(\left\|\cre{\left(P_h\right)} u\right\|_{L^2(M)}+\left\|u\right\|_{L^2(\Omega_\beta)}\right)\\
\le e^{C/h}\left(\left\|P_h u\right\|_{L^2(M)}+\left\|u\right\|_{L^2(\Omega_\beta)}\right),
\end{multline}
as $\left\|\cre{\left(P_h\right)} u\right\|_{L^2(M)}\le\left\|P_h u\right\|_{L^2(M)}$.
\end{proof}
\begin{corollary}[Low frequency estimates]\label{S3T7}
On the prespecified $(M, g)$ in Section \ref{SS12}, assume the Network Control Condition $(L,\omega, 2\beta,\{x_m\})$. For a fixed $\mu$, let
\begin{equation}
P_\mu=\Delta_g+i\mu a-(\mu^2-1).
\end{equation}
Then, there exists a constant $C>0$ such that
\begin{equation}\label{S3LL10}
\left\|u\right\|_{L^2(M)}\le C\left(\left\|P_\mu u\right\|_{L^2(M)}+\left\|u\right\|_{L^2(\Omega_\beta)}\right).
\end{equation}
\end{corollary}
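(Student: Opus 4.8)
The plan is to dispose of the sub-threshold range directly and to treat $\mu^{2}\ge 1$ by a compactness-and-unique-continuation argument; unlike the high-frequency estimate of Corollary~\ref{S3T6}, the driving tool here is not the Carleman estimate but local elliptic theory together with unique continuation and a Rellich-type control at infinity. If $\mu^{2}<1$ there is nothing to prove: $\operatorname{Re}P_{\mu}=\Delta_{g}+(1-\mu^{2})$ is self-adjoint and $\ge 1-\mu^{2}>0$, so $\|P_{\mu}u\|_{L^{2}(M)}\|u\|_{L^{2}(M)}\ge\operatorname{Re}\langle P_{\mu}u,u\rangle=\langle(\operatorname{Re}P_{\mu})u,u\rangle\ge(1-\mu^{2})\|u\|_{L^{2}(M)}^{2}$, which gives \eqref{S3LL10} with $C=(1-\mu^{2})^{-1}$ and without the $\Omega_{\beta}$-term (the case $\mu=0$ being identical, with $C=1$). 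So from now on suppose $\mu\neq 0$ and $\mu^{2}\ge 1$.

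Assume \eqref{S3LL10} fails for this $\mu$. Then there are $u_{n}\in H^{2}(M)$ with $\|u_{n}\|_{L^{2}(M)}=1$, $\|P_{\mu}u_{n}\|_{L^{2}(M)}\to 0$, and $\|u_{n}\|_{L^{2}(\Omega_{\beta})}\to 0$. Because $P_{\mu}$ is $\Delta_{g}$ plus a uniformly bounded zeroth-order operator and $M$ is of bounded geometry, the global elliptic estimate $\|u_{n}\|_{H^{2}(M)}\le C(\|P_{\mu}u_{n}\|_{L^{2}(M)}+\|u_{n}\|_{L^{2}(M)})$ bounds $\{u_{n}\}$ in $H^{2}(M)$; along a subsequence $u_{n}\rightharpoonup u$ weakly in $H^{2}(M)$ and, by Rellich on an exhaustion of $M$, $u_{n}\to u$ in $H^{1}_{\mathrm{loc}}(M)$ for some $u\in H^{2}(M)$. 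Then $P_{\mu}u=0$, and $u\equiv 0$ on $\Omega_{\beta}$ since $\|u_{n}\|_{L^{2}(\Omega_{\beta})}\to 0$. Pairing $P_{\mu}u=0$ with $u$ over $M$ and taking imaginary parts yields $\mu\int_{M}a|u|^{2}=0$, so $u$ vanishes on the nonempty open set $\{a>0\}\supseteq\Omega_{\beta}$ (nonempty by the Network Control Condition); hence $au\equiv 0$ and $(\Delta_{g}-(\mu^{2}-1))u=0$ on $M$. By weak unique continuation for second-order elliptic operators with smooth coefficients and the connectedness of $M$, $u\equiv 0$.

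It remains to contradict $\|u_{n}\|_{L^{2}(M)}=1$. Since $u_{n}\to 0$ in $L^{2}_{\mathrm{loc}}(M)$, it suffices to prove the tightness $\sup_{n}\|u_{n}\|_{L^{2}(M\setminus K)}\to 0$ as $K$ exhausts $M$, i.e. that no $L^{2}$-mass escapes out the ends, and this is the heart of the matter. Testing $P_{\mu}u_{n}$ against $\chi^{2}u_{n}$ for a radial cutoff $\chi=\chi(r)$ that vanishes for small $r$ and equals $1$ for large $r$ on an end, integrating by parts and taking real parts, gives $\|\chi\nabla_{g_{0}}u_{n}\|^{2}+(1-\mu^{2})\|\chi u_{n}\|^{2}=\operatorname{Re}\langle P_{\mu}u_{n},\chi^{2}u_{n}\rangle+E_{n}$ with $E_{n}$ supported in $\operatorname{supp}\nabla\chi$ and of size $O(\|u_{n}\|_{H^{1}(\operatorname{supp}\nabla\chi)})$, so for a fixed $\chi$ the whole right side is $O(\|P_{\mu}u_{n}\|_{L^{2}}+\|u_{n}\|_{H^{1}(\operatorname{supp}\nabla\chi)})\to 0$. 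For $\mu^{2}<1$ this already bounds the tail; but for $\mu^{2}\ge 1$ — precisely when $\mu^{2}-1$ lies in the essential spectrum of $\Delta_{g}$ — the $(1-\mu^{2})$-term has the wrong sign and this crude test function fails, which is the main obstacle. In that range the plan is to use the imaginary-part identity $|\mu|\int a\chi^{2}|u_{n}|^{2}=O(\|P_{\mu}u_{n}\|_{L^{2}}+\|u_{n}\|_{H^{1}(\operatorname{supp}\nabla\chi)})$ to pin $u_{n}$ down near the damping balls far out on each end, and then exploit the asymptotically cylindrical or conic structure of $g_{0}$ at infinity: decomposing $u_{n}$ into cross-sectional eigenmodes of $\Delta_{h_{k}}$ and analysing the resulting radial equations, one sees that an $L^{2}$ quasimode cannot carry non-decaying radial behaviour, which upgrades the local smallness near the damping balls to a uniform, decaying tail bound — a quantitative, near-solution form of the Rellich uniqueness theorem available for ends of the type fixed in Section~\ref{SS12}. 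With this no-escape statement in hand, $u_{n}\to 0$ in $L^{2}(M)$, contradicting $\|u_{n}\|_{L^{2}(M)}=1$ and proving \eqref{S3LL10}.
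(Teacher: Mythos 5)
Your proposal abandons the paper's route entirely: the paper proves this corollary in two lines by applying the global multi-weight Carleman estimate (Proposition \ref{S3T5}) with the constant potential $V(x;h)=h^{2}(\mu^{2}-1)$, noting that $\operatorname{Re}(h^{2}P_{\mu})=h^{2}\Delta_{g}-V$, and then fixing a single small value of $h$, so that $e^{C/h}$ becomes the constant $C$ in \eqref{S3LL10}. Your replacement by a compactness argument is legitimate in spirit for a fixed $\mu$ (that is all the application in Section 4 needs), and your treatment of $\mu^{2}<1$, the elliptic $H^{2}$ bound, the extraction of a weak limit, the imaginary-part identity forcing $u=0$ on $\{a>0\}$, and weak unique continuation on the connected $M$ are all fine.

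The genuine gap is exactly where you say "the plan is": the no-escape-of-mass statement for $\mu^{2}\ge 1$ is asserted, not proved, and the sketch you give does not go through as described. First, the metric $g$ is only a $C^{\infty}_{b}$ bounded perturbation of the model $g_{0}$, so there is no exact warped-product structure and $\Delta_{g}$ is not diagonalized by cross-sectional eigenmodes of $\Delta_{h_{k}}$; the perturbation is not small, so it cannot be absorbed as an error in the resulting radial equations. Second, even on the exact model, the Network Control Condition only gives damping on balls $B(x_{m},\omega)$ forming an $L$-net, not on full cross-sections, so after projecting onto a cross-sectional mode the information $\int a\chi^{2}\lvert u_{n}\rvert^{2}\to 0$ does not control the individual radial ODEs. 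What you actually need is a quantitative propagation of smallness from the damping balls across gaps of size $\sim L$, uniformly as $r\to\infty$ (and on subconic ends where the cross-sections expand), applied to quasimodes $u_{n}$ rather than to an exact limiting solution — the weak limit only sees compact sets, so a qualitative Rellich uniqueness theorem for the limit cannot rule out mass escaping along the sequence. That uniform quantitative estimate at infinity is precisely what the paper's multi-weight Carleman machinery (Theorem \ref{S2T2}, Lemmas \ref{S3T1} and \ref{S3T3}, Proposition \ref{S3T4}) was built to supply, so as written your argument either leaves its central step unproved or implicitly assumes an estimate of the same strength as the one being established.
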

\begin{proof}
Let $V(x)=h^2\left(1-\mu^2\right)$. From Proposition \ref{S3T5} we know
\begin{multline}
\left\|u\right\|_{L^2(M)}\le e^{C/h}\left(\left\|\cre{\left(h^2P_\mu\right)} u\right\|_{L^2(M)}+\left\|u\right\|_{L^2(\Omega_\beta)}\right)\\
\le e^{C/h}\left(\left\|h^2P_\mu u\right\|_{L^2(M)}+\left\|u\right\|_{L^2(\Omega_\beta)}\right),
\end{multline}
as $\left\|\cre{\left(h^2P_\mu\right)} u\right\|_{L^2(M)}\le\left\|h^2P_\mu u\right\|_{L^2(M)}$. Fix some $h$ small to see \eqref{S3LL10}. 
\end{proof}

\section{Exponential decay of energy}
In this section we aim to show the energy decays exponentially under the Geometric Control Condition in Definition \ref{S1T2}. To do this we need to use Corollary \ref{S3T7} which requires the Network Control Condition. 

\begin{remark}\label{S4T1}
By assuming the Geometric Control Condition $(T,\alpha)$ on $(M,g)$ we can show that the Network Control Condition also holds. Given the Geometric Control Condition, for each $x\in M$, there is some $y\in B(x, T)$ such that $a(y)\ge\alpha$. Cover $(M,g)$ by $\{B(x_m, T)\}$ and let $L=2T$ and $2\beta=\alpha/{2}$, then for each $y\in M$, we have $d(y, \bigcup \{x_m\})\le L$. Meanwhile, as $a\in C^\infty_b(M)$ is uniformly continuous, there is some $\omega>0$ such that $a\ge2\beta=\alpha/2$ on each $B(x_m, \omega)$. Therefore we do have the Network Control Condition $(L, \omega, 2\beta, \{x_m\})$. 
\end{remark}
To characterise the exponential decay, we cite a theorem of \cite{hua85}. See also \cite{gea78,pru84}. 
\begin{theorem}[Gearhart-Pr\"{u}ss-Huang]\label{S4T2}Let $e^{tA}$ be a $C^0$-semigroup in a Hilbert space $X$, and assume there is $C>0$ such that $\|e^{tA}\|_{X\rightarrow X}\le C$ for all $t\ge0$. Then there is $c>0$ for $\|e^{tA}\|_{X\rightarrow X}\le e^{-ct}$ for all $t\ge0$ if and only if $i\setr\cap \sigma(A)=\emptyset$, that is, the spectrum of $A$ has no purely imaginary elements, and
\begin{equation}\label{S4L1}
\sup_{\mu\in\setr}\left\|\left(A-i\mu\right)^{-1}\right\|_{X\rightarrow X}<\infty.
\end{equation}
\end{theorem}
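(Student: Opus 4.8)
The plan is to prove the non-trivial implication; the converse is immediate, since if $\|e^{tA}\|_{X\to X}\le e^{-ct}$ then $\mu\mapsto(A-i\mu)^{-1}$ is given by the absolutely convergent integral $-\int_0^\infty e^{-i\mu t}e^{tA}\,dt$ with $\|(A-i\mu)^{-1}\|\le c^{-1}$ uniformly, and $\sigma(A)\subset\{\cre\lambda\le -c\}$. Assume then $\|e^{tA}\|\le M$, $i\setr\cap\sigma(A)=\emptyset$, and $K:=\sup_{\mu\in\setr}\|(A-i\mu)^{-1}\|<\infty$; write $R(\lambda)=(\lambda-A)^{-1}$. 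The first step is to push the resolvent bound into a closed left half-strip by a Neumann series: for $\lambda=\sigma+i\mu$ with $|\sigma|\le\delta:=1/(2K)$ one has $\lambda-A=(i\mu-A)\big(I+\sigma R(i\mu)\big)$ with $\|\sigma R(i\mu)\|\le\delta K=1/2$, so $\lambda\in\rho(A)$ and $\|R(\lambda)\|\le 2K$; combined with $\|R(\lambda)\|\le M/\cre\lambda\le M/\delta$ for $\cre\lambda\ge\delta$ (from the Laplace representation and $\|e^{tA}\|\le M$) this gives $\{\cre\lambda\ge-\delta\}\subset\rho(A)$ and $C_0:=\sup_{\cre\lambda\ge-\delta}\|R(\lambda)\|<\infty$. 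Exactly the same bounds hold for $A^*$ on the conjugate region, so every estimate below applies verbatim to the adjoint semigroup $e^{tA^*}$, which on the Hilbert space $X$ is again strongly continuous with generator $A^*$.

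The heart of the matter --- and the only place the Hilbert structure enters --- is the weighted square-integrability estimate
\[
\int_0^\infty e^{\delta t}\,\|e^{tA}x\|^2\,dt\le C\,\|x\|^2\qquad\text{for all }x\in X,
\]
together with its analogue for $e^{tA^*}$. I would obtain it by a Plancherel/Paley--Wiener argument. For $\cre\lambda=\sigma>0$ set $F(\lambda)=R(\lambda)x=\int_0^\infty e^{-\lambda t}e^{tA}x\,dt$; Plancherel in $\mu$ and $\|e^{tA}x\|\le M\|x\|$ give $\int_\setr\|F(\sigma+i\mu)\|^2\,d\mu=2\pi\int_0^\infty e^{-2\sigma t}\|e^{tA}x\|^2\,dt\le\pi M^2\sigma^{-1}\|x\|^2$, while for $-\delta<\sigma<\sigma'$ the resolvent identity yields $F(\sigma+i\mu)=\big(I+(\sigma'-\sigma)R(\sigma+i\mu)\big)F(\sigma'+i\mu)$, hence $\|F(\sigma+i\mu)\|\le(1+(\sigma'-\sigma)C_0)\|F(\sigma'+i\mu)\|$ pointwise in $\mu$. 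Taking $\sigma'=1$, these two facts combine to $\sup_{\sigma>-\delta/2}\int_\setr\|F(\sigma+i\mu)\|^2\,d\mu\le C'\|x\|^2$. Since $F$ is holomorphic and $X$-valued on $\{\cre\lambda>-\delta\}$, shifting by $\delta/2$ and invoking the classical Paley--Wiener characterization of the half-plane Hardy space $H^2$ with values in a Hilbert space identifies $t\mapsto e^{\delta t/2}e^{tA}x$ as an $L^2(0,\infty;X)$-function (by uniqueness of the Laplace transform on $\{\cre\lambda>0\}$), with square norm $\tfrac{1}{2\pi}\sup_{\sigma>-\delta/2}\int_\setr\|F(\sigma+i\mu)\|^2\,d\mu\le C\|x\|^2$; this is precisely the displayed inequality. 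I expect this to be the main obstacle: one cannot apply Plancherel directly on the line $\cre\lambda=-\delta/2$, since that is tantamount to the conclusion, so the contour shift must be routed through the uniform $L^2$-bound over the open strip and the $H^2$-theorem.

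Finally, the exponential bound follows from the weighted $L^2$-estimates for $e^{tA}x$ and $e^{tA^*}y$ by a semigroup-splitting trick. For $x,y\in X$ and $t>0$, using $\langle e^{tA}x,y\rangle=\langle e^{sA}x,e^{(t-s)A^*}y\rangle$ for $0\le s\le t$, integrating in $s$, and splitting $e^{\delta t/2}=e^{\delta s/2}e^{\delta(t-s)/2}$,
\[
t\,e^{\delta t/2}\langle e^{tA}x,y\rangle=\int_0^t\big\langle e^{\delta s/2}e^{sA}x,\ e^{\delta(t-s)/2}e^{(t-s)A^*}y\big\rangle\,ds,
\]
and Cauchy--Schwarz bounds the right-hand side by $\big(\int_0^\infty e^{\delta s}\|e^{sA}x\|^2ds\big)^{1/2}\big(\int_0^\infty e^{\delta u}\|e^{uA^*}y\|^2du\big)^{1/2}\le C\|x\|\,\|y\|$. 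Hence $\|e^{tA}\|_{X\to X}\le C t^{-1}e^{-\delta t/2}$, so $\|e^{tA}\|\le Ce^{-\delta t/2}$ for $t\ge1$; combined with $\|e^{tA}\|\le M$ on $[0,1]$ this gives $\|e^{tA}\|_{X\to X}\le M'e^{-ct}$ for all $t\ge0$ with $c=1/(4K)$, which is the assertion. Everything past the weighted $L^2$-estimate is routine bookkeeping.
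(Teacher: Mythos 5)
Your proposal is correct, but note that the paper does not prove this statement at all: Theorem \ref{S4T2} is quoted as a known result, with the proof deferred to Huang \cite{hua85} (see also Gearhart, Pr\"uss). What you have written is a complete, self-contained proof along the standard Pr\"uss--Huang lines, and each step checks out: the Neumann-series extension of the resolvent bound to the strip $\{\lvert\cre\lambda\rvert\le 1/(2K)\}$, the vector-valued Plancherel identity on vertical lines $\cre\lambda=\sigma>0$ combined with the resolvent identity to get a uniform $L^2$ bound over $\sigma>-\delta/2$, the Paley--Wiener/$H^2$ identification of $e^{\delta t/2}e^{tA}x$ as an $L^2(0,\infty;X)$ function (you are right that this, rather than a naive Plancherel on the shifted line, is the crux; your uniqueness-of-Laplace-transform argument should be run on the common half-plane $\cre\lambda>\delta/2$ where both transforms converge absolutely, a cosmetic point), and the duality/splitting trick $t\,e^{\delta t/2}\langle e^{tA}x,y\rangle=\int_0^t\langle e^{\delta s/2}e^{sA}x,e^{\delta(t-s)/2}e^{(t-s)A^*}y\rangle\,ds$, which legitimately uses that $e^{tA^*}$ satisfies the same hypotheses on a Hilbert space. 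One caveat: your argument yields $\|e^{tA}\|\le M'e^{-ct}$ rather than the literal $\|e^{tA}\|\le e^{-ct}$ stated in the theorem; that is not a defect of your proof but an imprecision in the statement (with only $\|e^{tA}\|\le M$ one cannot get prefactor $1$ in general, as the nilpotent shift semigroup shows), and it is harmless for the paper, since Theorem \ref{LT1} only claims a decay of the form $Me^{-\lambda t}$ and the semigroup there is a contraction.
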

We now give a full proof of Theorem \ref{LT1} concerning the exponential decay of energy, closely following the idea of \cite{bj16}.  
\begin{proof}[Proof of Theorem \ref{LT1}]
1. We start by setting up a proof by contradiction. Let $X=H^1(M)\times L^2(M)$. We will drop $M$ and write $H^1$ and $L^2$ whenever there is no confusion. To show the energy decays exponentially, it suffices to show that the semigroup $e^{tA}$ generated by 
\begin{equation}
A=\begin{pmatrix}
0 & \id\\
-\left(\Delta_g+\id\right) & -a(x)
\end{pmatrix}
\end{equation}
is exponentially stable, in the sense that there exists $C>0$ such that
\begin{equation}
\left\|e^{tA}\right\|_{X\rightarrow X}\le e^{-Ct}
\end{equation}
for each $t\ge 0$. Indeed,
\begin{equation}
E(u)\le \left\|\left(u(t), \pd_t u(t)\right)\right\|_{X}=\left\|e^{tA}\left(u_0, u_1\right)\right\|_{X}\le Me^{-\lambda t}\left\|\left(u_0, u_1\right)\right\|_{X} 
\end{equation}
as claimed. To obtain the exponential stability of $e^{tA}$, it is assumed against \eqref{S4L1}, that $A-i\mu$ is not uniformly bounded from below, that is, there exists a sequence 
\begin{equation}\label{S4LL17}
U_n=(u_n, v_n)\in H^2\times H^1, \quad \left\|U_n\right\|_{X}^2=\left\|u_n\right\|_{H^1}^2+\left\|v_n\right\|_{L^2}^2=1,
\end{equation}
and $\left\{\mu_n\right\}\subset \setr$ such that $\left(A-i\mu_n\right)U_n=o_{X}(1)$. That is, 
\begin{equation}
\begin{cases}
v_n=i\mu_n u_n+o_{H^1}(1)\\
P_{\mu_n}u_n=\left(\Delta_g+\id\right)u_n+a v_n+i\mu_n v_n=o_{L^2}(1),
\end{cases}
\end{equation}
which reduces to 
\begin{equation}\label{S4L3}
\begin{cases}
v_n=i\mu_n u_n+o_{H^1}(1)\\
P_{\mu_n}u_n=\left(\Delta_g+\id\right)u_n+i\mu_n a u_n-\mu_n^2 u_n=o_{L^2}(1).
\end{cases}
\end{equation}
There are two cases: (a) the low frequency case when $\{\mu_n\}$ is bounded; (b) the high frequency case when $\{\mu_n\}$ is unbounded. 

2. Consider the low frequency case (a) and we show there is a contradiction via the low frequency Carleman estimate. As $\{\mu_n\}$ is bounded, by passing through a convergent subsequence one has $\mu_n\rightarrow \mu\in \setr$. By the continuity of the one-parameter family $P_{*}$ we have
\begin{equation}\label{S4L2}
P_{\mu}u_n=\left(\Delta_g+\id\right)u_n+i\mu a u_n-\mu^2 u_n=o_{L^2}(1).
\end{equation}
Here $\{u_n\}$ forms an $o_{L^2}(1)$-quasimode associated with $P_\mu$. Pair \eqref{S4L2} with $u_n$ to see as $n\rightarrow\infty$, 
\begin{equation}
\left\|u_n\right\|_{H^1}=\mu\left\|u_n\right\|_{L^2}+o(1),\quad \left\|\sqrt{a}u_n\right\|_{L^2}=o(1).
\end{equation}
From \eqref{S4L3} we know that $\|v_n\|_{L^2}=\mu \|u_n\|_{L^2}+o(1)$ and $1\equiv\|U_n\|_{X}=\sqrt{2}\mu\|u_n\|_{L^2}+o(1)$. Note that this rules out the possibility that $\mu=0$. Therefore $\|u_n\|_{L^2}=1/\sqrt{2}+o(1)$, as $n\rightarrow\infty$. On $\Omega_\beta$ we have $a\ge \beta$ and then $\|u_n\|_{L^2(\Omega_\beta)}$ is bounded by $\beta^{-1/2}\|\sqrt{a}u_n\|_{L^2(\Omega_\beta)}=o(1)$. The Geometric Control Condition $(T,\alpha)$ implies the Network Control Condition $(L, \omega, 2\beta, \{x_m\})$, as in Remark \ref{S4T1}. Now invoke Corollary \ref{S3T7}. There is some $C>0$ such that for all $n$, we have
\begin{equation}
\left\|u_n\right\|_{L^2(M)}\le C\left(\left\|P_\mu u_n\right\|_{L^2(M)}+\left\|u_n\right\|_{L^2(\Omega_{\beta})}\right).
\end{equation}
Send $n$ to $\infty$, it becomes 
\begin{equation}\label{S4L5}
\frac{1}{\sqrt{2}}\le C\left(o_n\left(1\right)+o_n(1)\right)+o_n(1)=o_n(1),
\end{equation}
which leads to the desired contradiction in low frequencies.

3. Consider the high frequency case (b). As $\{\mu_n\}$ is not bounded, by passing through a subsequence we can assume $\mu_n\rightarrow\pm \infty$. As $A$ is a linear real operator, by symmetry we could assume without loss of generality that $\mu_n=h^{-1}\rightarrow\infty$. The system \eqref{S4L3} is reduced to
\begin{equation}\label{S4L4}
P_h u_h=\left(h^2 \Delta_g-1\right)u_h+ihau_h+h^2u_h=o_{L^2}(h^2)+o_{H^1}(h)
\end{equation}
via reparametrisation by $h$ instead of $n$. We claim that the operator $P_h$ is invertible on $L^2$ with 
\begin{gather}\label{S4L7}
\left\|P_h^{-1}\right\|_{L^2\rightarrow L^2}\le C/h,\\
\label{S4LL13}\left\|P_h^{-1}\right\|_{H^1\rightarrow H^1}\le C'/h,
\end{gather}
within Step 3. 

3a. We set up another proof by contradiction against \eqref{S4L7}, and then establish a commutator argument. Assume against \eqref{S4L7} that there is a family of $\cur{w_h}\subset H^2$ with 
\begin{equation}\label{S4L8}
\left\|w_h\right\|\equiv 1,\quad P_h  w_h=o_{L^2}(h).
\end{equation}
Consider that
\begin{equation}
P_h=\oph(\abs{\xi}^2-1)+ih\oph(a)+\bigo_{H^1_h\rightarrow L^2}(h).
\end{equation}
Pick a symbol $b(x,\xi)\in S^0_u(T^*M)$, to be determined later. Compute the commutator of $\oph(b)$ and $P_h$, 
\begin{equation}\label{S4L9}
\left[\oph\left(b\right), P_h\right]=ih\oph\left(\left\{\abs{\xi}^2, b\right\}\right)+\bigo_{L^2\rightarrow L^2}\left(h^2\right),
\end{equation}
while
\begin{multline}\label{S4LL10}
\left\langle \left[\oph\left(b\right), P_h\right]w_h, w_h\right\rangle=\left\langle \oph\left(b\right) P_hw_h, w_h\right\rangle-\left\langle \oph\left(b\right)w_h, P_h^* w_h\right\rangle\\
=o\left(h\right)+\left\langle \oph\left(b\right)w_h, P_h w_h\right\rangle+\left\langle \oph\left(b\right)w_h, 2iha(x) w_h\right\rangle=-2ih\left\langle \oph\left(ab\right)w_h,  w_h\right\rangle+o\left(h\right).
\end{multline}
Observe from \eqref{S4L9} and \eqref{S4LL10} that
\begin{equation}\label{S4L6}
\left\langle \oph\left(2ab+\left\{\abs{\xi}^2, b\right\}\right)w_h, w_h\right\rangle=o(1).
\end{equation}

3b. Now we show there is a semiclassical concentration phenomenon near the unit speed Hamiltonian flow, and use the Geometric Control Condition to construct an explicit counterexample against \eqref{S4L6}, to conclude \eqref{S4L7}. Construct
\begin{equation}
b(x,\xi)=e^{c(x,\xi)},\quad c(x,\xi)=\frac2T \int_{0}^T\int_0^t \varphi_s^*a\left(x,\xi\right)~ds dt\ge 0.
\end{equation}
Note that $\{\abs{\xi}^2, \varphi^*_s a\}=\pd_\tau \varphi_\tau^* a|_{\tau=s}$ and we can verify that on $(x,\xi)\in\Sigma$ we have
\begin{equation}
2ab+\left\{\abs{\xi}^2, b\right\}=2e^{c(x,\xi)}\ang{a}_T(x,\xi)\ge \alpha>0.
\end{equation}
Now take a smooth microlocal cutoff $\chi\in \fscinf_b(T^*M)$ which is $1$ on $\Sigma=\{|\xi|^2=1\}$, supported inside $\{1/2\le \abs{\xi}\le 2\}$ and is $0$ whenever $2ab+\{\abs{\xi}^2, b\}=2e^{c(x,\xi)}\ang{a}_T(x,\xi)\le \alpha/2$. We claim that $w_h$ is microlocally concentrating near the unit speed set $\{|\xi|^2=1\}$, in the sense that $\left\langle \oph(1-\chi)w_h, w_h\right\rangle=o(1)$, as $h\rightarrow 0$. Note that the semiclassical principal symbol of $P_h$ is $p(x,\xi)=\abs{\xi}^2-1$, which is not 0 on the support of $1-\chi$. Hence
\begin{multline}\label{S4LL12}
\left\langle \oph\left(1-\chi\right)w_h,w_h \right\rangle=\left\langle \oph\left((1-\chi)p^{-1}\right)P_h w_h,w_h \right\rangle\\
+h\left\langle R_{-1}w_h, w_h\right\rangle=\bigo\left(h\right)
\end{multline}
for some $R_{-1}\in \Psi^{-1}_{u, h}$ which is then bounded on $L^2$. Check Appendix \ref{SA} for the class of semiclassical uniform pseudodifferential operators $\Psi^{*}_{u, h}$. Similarly we have
\begin{multline}\label{S4LL11}
\left\langle \oph\left(\left(2ab+\left\{\abs{\xi}^2, b\right\}\right)\left(1-\chi\right)\right)w_h, w_h\right\rangle\\
=\left\langle \oph\left(\left(2ab+\left\{\abs{\xi}^2, b\right\}\right)\left(1-\chi\right)p^{-1}\right)P_h w_h, w_h\right\rangle+h\left\langle R_0w_h, w_h\right\rangle=\bigo(h),
\end{multline}
for some $R_{0}\in \Psi_{u, h}^{0}$ which is bounded on $L^2$. Here we also used the fact that $\oph((2ab+\{\abs{\xi}^2, b\})(1-\chi))\in \Psi_{u, h}^{1}$. From \eqref{S4LL12} and \eqref{S4LL11} we know
\begin{multline}
\left\langle \oph\left(2ab+\left\{\abs{\xi}^2, b\right\}\right)w_h, w_h\right\rangle\\
=\left\langle \oph\left(\left(2ab+\left\{\abs{\xi}^2, b\right\}\right)\chi+\frac{\alpha}2 \left(1-\chi\right)\right)w_h, w_h\right\rangle+\bigo\left(h\right).
\end{multline}
As a symbol of order 0, $(2ab+\{\abs{\xi}^2, b\})\chi+\alpha (1-\chi)/2\ge \alpha/2$ everywhere on $T^*M$. We apply the Garding inequality \ref{A1T1} and see
\begin{equation}
\left\langle \oph\left(\left(2ab+\left\{\abs{\xi}^2, b\right\}\right)\chi+\frac{\alpha}2 \left(1-\chi\right)\right)w_h, w_h\right\rangle\ge C \left\|w_h\right\|^2=C>0, 
\end{equation}
uniformly for small $h$. Therefore
\begin{equation}
\left\langle \oph\left(2ab+\left\{\abs{\xi}^2, b\right\}\right)w_h, w_h\right\rangle\ge C+\bigo \left(h\right).
\end{equation}
This contradicts \eqref{S4L6} immediately. Therefore we have established \eqref{S4L7}.

3c. We want to improve \eqref{S4L7} and get the estimate \eqref{S4LL13} on $H^1$. 
Consider for each $w\in H^1$, we have
\begin{multline}
\left\|P_h^{-1}w\right\|_{H^1}\le\left\|P_h^{-1}w\right\|_{L^2}+\left\|\nabla_{g}P_h^{-1}w\right\|_{L^2}\le Ch^{-1} \left\|w\right\|_{L^2}+\left\|P_h^{-1}\nabla_{g}w\right\|_{L^2}\\
+h^{-1}\left\|\left[h\nabla_g, P_h^{-1}\right]w\right\|_{L^2}\le Ch^{-1}\left(\left\|w\right\|_{L^2}+\left\|\nabla_g w\right\|_{L^2}\right)+C''\|w\|_{L^2}\le C'h^{-1}\left\|w\right\|_{H^1}.
\end{multline}
This is what we need. 

4. We now use our estimates \eqref{S4L7} and \eqref{S4LL13} to finish the proof by contradiction. Rewrite \eqref{S4L4} as 
\begin{equation}\label{S4LL14}
P_h u_h=f_h+o_{L^2}(h^2),
\end{equation}
where $f_h=o_{H^1}(h)$. Note that $P_h^{-1}f_h=o_{H^1}(1)$ because of \eqref{S4LL13}. Now observe 
\begin{multline}\label{S4LL16}
\cre{\left\langle f_h, P_h^{-1}f_h\right\rangle}=\cre{\left\langle P_h \left(P_h^{-1}f_h\right), P_h^{-1}f_h\right\rangle}\\
=\langle \left(h^2\Delta_g -1+h^2\right)P_h^{-1}f_h, P_h^{-1}f_h\rangle=h^2\left\|\nabla_g P_h^{-1}f_h\right\|^2_{L^2}+h^2\left\|P_h^{-1}f_h\right\|^2_{L^2}\\
-\left\|P_h^{-1}f_h\right\|_{L^2}^2=h^2\left\|P_h^{-1}f_h\right\|_{H^1}^2-\left\|P_h^{-1}f_h\right\|_{L^2}^2.
\end{multline}
The second last equality comes from integration by parts. Meanwhile as $f_h=o_{H^1}(h)$ we have $\cre{\langle f_h, P_h^{-1}f_h\rangle}=o(h)\left\|P_h^{-1}f_h\right\|_{L^2}$. Hence
\begin{multline}\label{S4LL15}
\left\|P_h^{-1}f_h\right\|_{L^2}=\frac12\left(o(h)+\sqrt{o(h^2)+4h^2\left\|P_h^{-1}f_h\right\|_{H^1}^2}\right)\\
=h\left\|P_h^{-1}f_h\right\|_{H^1}+o(h)=o(h). 
\end{multline}
By \eqref{S4LL14} we have
\begin{equation}
P_h\left(u_h-P_h^{-1}f_h\right)=P_hu_h-f_h=o_{L^2}(h^2),
\end{equation}
Then $u_h-P_h^{-1}f_h=P_h^{-1}(o_{L^2}(h^2))=o_{L^2}(h)$, that is $u_h=P_h^{-1}f_h+o_{L^2}(h)=o_{L^2}(h)$, as a result of \eqref{S4LL15}. From \eqref{S4L3} we observe that $\left\|v_h\right\|_{L^2}=h^{-1} \left\|u_h\right\|_{L^2}+o(1)=o(1)$. Similarly to \eqref{S4LL16}, we take the real part of the $L^2$-inner product between $P_h u_h$ and $u_h$ to see
\begin{equation}
\left\|u_h\right\|_{H^1}^2=h^{-2}\left(\left\|u_h\right\|_{L^2}^2+o(h)\left\|u_h\right\|_{L^2}\right)=o(1).
\end{equation}
Now we have
\begin{equation}
\left\|U_h\right\|_X^2=\left\|u_h\right\|_{H^1}^2+\left\|v_h\right\|_{L^2}^2=o(1)
\end{equation}
This contradicts our assumption that this term should be constantly $1$, as in \eqref{S4LL17}. 

5. Now we bring together the contradictions in high frequencies and low frequencies to see
\begin{equation}
\sup_{\mu\in\setr}\left\|\left(A-i\mu\right)^{-1}\right\|_{X\rightarrow X}<\infty.
\end{equation}
Hence by Theorem \ref{S4T2}, we conclude that $e^{tA}$ is exponentially stable and the energy decays exponentially. 
\end{proof}

\section{Logarithmic decay of energy}
In this section we aim to show the energy decays logarithmically under the Network Control Condition. In order to characterise the logarithmic decay, we cite \cite[Theorem 3]{bur98}.
\begin{theorem}[Burq]\label{S5T1}Let $A$ be a maximal dissipative operator that generates a contraction $C^0$-semigroup in a Hilbert space $X$ and assume that there exist $C, c>0$ such that $i\mathbb{R}\cap \sigma(A)=\emptyset$, that is, the spectrum of $A$ has no purely imaginary elements, and assume for any $\mu\in \setr$,
\begin{equation}\label{S5L1}
\left\|\left(A-i\mu\right)^{-1}\right\|_{X\rightarrow X}<C e^{c\abs{\mu}}.
\end{equation}
Then for any $k>0$ there is $C_k$ such that for any $t>0$, 
\begin{equation}
\left\|\frac{e^{tA}}{\left(1-A\right)^k}\right\|_{X\rightarrow X}\le \frac{C_k}{\log\left(2+t\right)^k}.
\end{equation}
\end{theorem}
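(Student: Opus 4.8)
This is the quantitative Tauberian theorem of \cite[Theorem 3]{bur98}, and I would prove it by the standard resolvent-to-decay argument. \emph{First}, promote the hypotheses to an analytic continuation of the resolvent: since $A$ is maximal dissipative, $\sigma(A)\subset\{\operatorname{Re}z\le0\}$ and $\nor{(z-A)^{-1}}_{X\to X}\le(\operatorname{Re}z)^{-1}$ for $\operatorname{Re}z>0$, and feeding \eqref{S5L1} into the Neumann series $(z-A)^{-1}=(i\mu-A)^{-1}\sum_{j\ge0}\brac{(i\mu-z)(i\mu-A)^{-1}}^{j}$ shows that $(z-A)^{-1}$ is holomorphic, with norm $\le 2Ce^{c\abs{\operatorname{Im}z}}$, on the exponentially thin region $\mathcal R=\{z:\operatorname{Re}z>-\tfrac12C^{-1}e^{-c\abs{\operatorname{Im}z}}\}$ — so $\sigma(A)$ stays a distance $\sim e^{-c\abs{\mu}}$ from each $i\mu$.

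\emph{Second}, use the inverse-Laplace/contour representation of the smoothed semigroup $f(t):=e^{tA}(1-A)^{-k}$, whose Laplace transform is $\tilde f(z)=(z-A)^{-1}(1-A)^{-k}$, holomorphic on $\mathcal R$ with $\nor{\tilde f(z)}\le C\langle\operatorname{Im}z\rangle^{-k}e^{c\abs{\operatorname{Im}z}}$ by the resolvent identity. Fix a height $R>0$, put $\delta_R=\tfrac14C^{-1}e^{-cR}$ so that the arc $\Gamma_R=\{\operatorname{Re}z=-\delta_R,\ \abs{\operatorname{Im}z}\le R\}$ sits inside $\mathcal R$, and apply the quantitative Ingham--Karamata theorem:
\[
\nor{f(t)}_{X\to X}\ \le\ \frac1{2\pi}\Big\|\int_{\Gamma_R}e^{zt}\tilde f(z)\,dz\Big\|+\frac{C_k}{\langle R\rangle^{k}}\ \le\ C e^{cR-\delta_R t}+\frac{C_k}{\langle R\rangle^{k}},
\]
where the last step uses $\abs{e^{zt}}=e^{-\delta_R t}$ on $\Gamma_R$ together with $\int_{-R}^{R}\langle\mu\rangle^{-k}e^{c\abs{\mu}}\,d\mu\le Ce^{cR}$. \emph{Third}, optimise: taking $R=R(t)=\tfrac1{2c}\log t$ gives $cR-\delta_R t=\tfrac12\log t-\tfrac14C^{-1}t^{1/2}\to-\infty$, so the first term is $o(1)$, while $\langle R\rangle^{-k}\le C_k(\log t)^{-k}$; this is the claimed rate for $t$ large, and the uniform bound $\nor{f(t)}_{X\to X}\le C$ for $t$ in a compact set upgrades it to $C_k\log(2+t)^{-k}$.

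The delicate point — and the reason one invokes Ingham--Karamata rather than a crude contour shift — is the treatment of the frequencies $\abs{\operatorname{Im}z}\ge R$: there the only resolvent bound is the exponential one of \eqref{S5L1}, which defeats any polynomial gain from $(1-z)^{-k}$, so this part of the contour cannot be estimated in $L^1$. One instead localises $f$ in $t$ against a Fej\'er-type kernel, which lets that slab be absorbed at a cost of only $\nor{f}_\infty/R$ (the $k$ derivatives carried by $(1-A)^{-k}$ improving $1/R$ to $\langle R\rangle^{-k}$). Everything else — the case $k=1$, to which the argument is insensitive, and the balance $R\sim\log t$ that downgrades the exponential rate of Theorem \ref{S4T2} to a logarithmic one — is routine bookkeeping, carried out in \cite[Theorem 3]{bur98}.
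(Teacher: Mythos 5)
The paper never proves this statement: Theorem \ref{S5T1} is imported verbatim from \cite[Theorem 3]{bur98} and used as a black box, so the only thing to compare your sketch with is Burq's own argument — and your outline is essentially that argument (in the streamlined quantitative Ingham--Karamata form later systematized by Batty--Duyckaerts). The structure is right and the constants work: the Neumann series does extend $(z-A)^{-1}$ holomorphically to $\{\operatorname{Re}z>-\tfrac12C^{-1}e^{-c\abs{\operatorname{Im}z}}\}$ with bound $2Ce^{c\abs{\operatorname{Im}z}}$; the contour shift to $\operatorname{Re}z=-\delta_R$ for $\abs{\operatorname{Im}z}\le R$ gives the $e^{cR-\delta_R t}$ term; the high-frequency slab is exactly the delicate point, and your resolution (time-mollification against a kernel with Fourier support in $[-R,R]$, the error being controlled by $\|\partial_t^k\,e^{tA}(1-A)^{-k}\|_\infty/R^k$, which is finite because $A^k(1-A)^{-k}$ is bounded and $e^{tA}$ is a contraction) is the correct mechanism; and $R=\tfrac1{2c}\log t$ balances the two terms to give $(\log t)^{-k}$, with small times handled by the trivial uniform bound. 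Two minor inaccuracies, neither fatal: (i) the bound $\|\tilde f(z)\|\le C\langle\operatorname{Im}z\rangle^{-k}e^{c\abs{\operatorname{Im}z}}$ is not what the iterated resolvent identity gives — it leaves correction terms $(1-A)^{-j}(1-z)^{-(k+1-j)}$ decaying only like $\langle z\rangle^{-1}$ — but on the finite contour $\Gamma_R$ these still integrate to $O(e^{cR})$, so the estimate you use survives; (ii) the displayed ``Ingham--Karamata'' inequality is not an off-the-shelf theorem in that form but the outcome of the Fej\'er-type localisation you describe in the last paragraph, so a complete write-up must actually perform that step (including the endpoint/support bookkeeping for the mollified transform and the passage to non-integer $k$), or simply cite \cite{bur98} as the paper does. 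As a blind reconstruction of the cited proof, the proposal is sound.
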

Now we give a proof of Theorem \ref{LT2}. 
\begin{proof}
1. We set up a proof by contradiction against \eqref{S5L1}. Let $X=H^1(M)\times L^2(M)$, and drop $M$ whenever there is no confusion. Assume that for all $c>0$, there exists a sequence of $U_n=(u_n, v_n)\in H^2\times H^1$, $\|U_n\|_{X}=1$, and $\left\{\mu_n\right\}\subset \setr$ such that $\left(A-i\mu_n\right)U_n=o_{H^1\times L^2}(e^{-c/h})$. This is reduced to
\begin{equation}\label{S5L2}
\begin{cases}
v_n=i\mu_n u_n+o_{H^1}(e^{-c/h})\\
P_{\mu_n}u_n=\left(\Delta_g+\id\right)u_n+i\mu_n a u_n-\mu_n^2 u_n=o_{L^2}(e^{-c/h}).
\end{cases}
\end{equation}
Again, as in the proof of Theorem \ref{LT1}, there are two cases: (a) the low frequency case when $\{\mu_n\}$ is bounded; (b) the high frequency case when $\{\mu_n\}$ is unbounded. 

2. Recall that the low frequency case under the Network Control Condition has been dealt with, in Step 2 of the proof of Theorem \ref{LT1}. See \eqref{S4L2} to \eqref{S4L5}. Therefore we have the desired contradiction. It suffices to look at the high frequency case, in which $\{\mu_n\}$ is not bounded, assuming merely the Network Control Condition. 

3. In the high frequency case, we use the high frequency Carleman estimate derived in Corollary \ref{S3T6} to show there is a contradiction. As $A$ is a linear real operator, by passing through a subsequence and by the symmetry, we may assume without loss of generality that $\mu_n=h^{-1}\rightarrow\infty$. Let $P_h=h^2P_\mu$. We reduce \eqref{S5L2} to
\begin{equation}\label{S5L3}
P_h u_h=\left(h^2 \Delta_g-1\right)u_h+ihau_h+h^2u_h=o_{L^2}(h^2 e^{-c/h}).
\end{equation}
From \eqref{S5L2} we know that 
\begin{equation}\label{S5L5}
\|v_h\|_{L^2}=h^{-1}\|u_h\|_{L^2}+o(e^{-c/h}).
\end{equation}
Observe
\begin{multline}\label{S5L4}
\left\langle P_h u_h, u_h\right\rangle=h^2\left\|\nabla_g u_h\right\|_{L^2}^2+h^2\left\|u_h\right\|_{L^2}^2-\left\|u_h\right\|_{L^2}^2+ih\left\langle au_h, u_h\right\rangle \\
=h^2\left\|u_h\right\|_{H^1}^2-\left\|u_h\right\|_{L^2}^2+ih\left\langle au_h, u_h\right\rangle
\end{multline}
as a result of integration by parts. Recall that $\|u_h\|_{L^2}\le \|u_h\|_{H^1}\le 1$, as $U_h$ is normalised. Therefore $\langle P_h u_h, u_h\rangle=o(h^2e^{-c/h})\|u_h\|_{L^2}$. Compare this with \eqref{S5L4} to see 
\begin{gather}\label{S5L6}
\|u_h\|_{H^1}=h^{-1}\|u_h\|_{L^2}+o(h e^{-c/h})\\
\|\sqrt{a}u_h\|_{L^2}=o(h^{\frac12}e^{-c/2h})\left\|u_h\right\|_{L^2}^\frac{1}{2}.
\end{gather}
Bring \eqref{S5L5} and \eqref{S5L6} together to see 
\begin{multline}
1=\left\|U_h\right\|_X^2=\left\|u_h\right\|_{H^1}^2+\left\|v_h\right\|_{L^2}^2=2h^{-2}\left\|u_h\right\|^2_{L^2}+2h^{-1}\left\|u_h\right\|_{L^2}o(e^{-c/h})\\
+o(e^{-2c/h}).
\end{multline}
Therefore
\begin{equation}
\left\|u_h\right\|_{L^2}=\sqrt{\frac{h^2}{2}+o\left(h^2e^{-2c/h}\right)}+o(h e^{-c/h})=\frac{h}{\sqrt{2}}+o(h e^{-c/h}).
\end{equation}
On $\Omega_\beta$ we have $a\ge \beta$ and then
\begin{equation}
\|u_h\|_{L^2(\Omega_\beta)}\le \beta^{-1/2}\|\sqrt{a}u_h\|_{L^2(\Omega_\beta)}=o(h^{\frac12}e^{-c/2h})\left\|u_h\right\|_{L^2}^{1/2}=o(he^{-c/h}).
\end{equation}
Now invoke Corollary \ref{S3T6}. We have for all $h$ small, there is a positive constant $C$ such that
\begin{equation}
\left\|u\right\|_{L^2(M)}\le e^{C/h}\left(\left\|P_h u\right\|_{L^2(M)}+\left\|u\right\|_{L^2(\Omega_\beta)}\right),
\end{equation}
which in our context reads
\begin{equation}
\frac{h}{\sqrt{2}}\le e^{C/h}\left(o\left(h^2 e^{-c/h}\right)+o\left(he^{-c/h}\right)\right)=o\left(he^{(C-c)/h}\right)
\end{equation}
which does not hold for any $c\ge C$. Hence we obtain the contradiction. 

4. We now claim \eqref{S1LL16} by sacrificing regularity for better decay. We have shown in Step 1, 2 and 3 that there are $C, c>0$ such that
\begin{equation}
\left\|\left(A-i\mu\right)^{-1}\right\|_{H^{1}\times L^2\rightarrow H^{1}\times L^2}<C e^{c\abs{\mu}},
\end{equation}
for all $\mu\in \setr$. Invoke Theorem \ref{S5T1} to see
\begin{equation}
\left\|\frac{e^{tA}}{\left(1-A\right)^k}\right\|_{H^{1}\times L^2\rightarrow H^{1}\times L^2}\le \frac{C_k'}{\log\left(2+t\right)^k}.
\end{equation}
As $e^{tA}$ is strongly continuous, it commutes with $(1-A)^k$. Therefore
\begin{multline}
\left\|e^{tA}\left(u_0, u_1\right)\right\|_{H^1\times L^2}=\left\|\left(1-A\right)^k \left(1-A\right)^{-k}e^{tA}\left(u_0, u_1\right)\right\|_{H^1\times L^2}\\
=\left\|\left(1-A\right)^{-k}e^{tA}\left(1-A\right)^k \left(u_0, u_1\right)\right\|_{H^1\times L^2}\\
\le \left\|\left(1-A\right)^{-k}e^{tA}\right\|_{H^1\times L^2\rightarrow H^1\times L^2}\left\|\left(1-A\right)^k \left(u_0, u_1\right)\right\|_{H^1\times L^2}\\
\le \frac{C_k''C_k'}{\log\left(2+t\right)^k}\left\|\left(u_0, u_1\right)\right\|_{H^{k+1}\times H^k}=\frac{C_k}{\log\left(2+t\right)^k}\left\|\left(u_0, u_1\right)\right\|_{H^{k+1}\times H^k}
\end{multline}
because $\left(1-A\right)^k\in \Psi_{u, h}^k(M)$.
\end{proof}

\appendix
\section{Analysis on manifolds of bounded geometry}\label{SA}
We will recall Riemannian geometric terminologies our arguments require in this appendix. Assume our manifold $(M^d, g)$ is smooth, connected, complete, and open, which means non-compact and without boundary. 

There are some invariantly defined classes of functions and operators on $M$. Denote the smooth complex-valued functions on $M$, by $C^\infty(M, \setc)$. Let $L^2_g(M)$ be the class of square-integrable complex-valued functions on $M$ with respect to the density $dg$ induced by the metric. This is a Hilbert space endowed with the inner product 
\begin{equation}
\langle u, v\rangle_{L^2_g(M)}=\int_M u\bar v~dg.
\end{equation}
Moreover $TM$ inherits a bundle metric 
\begin{equation}
\langle X, Y\rangle_{L^2_g(TM)}=\int_M g(X, Y)~dg.
\end{equation}
There is an exterior derivative on 0-forms $d: C^\infty(M)\rightarrow C^\infty(M, T^*M)$. The gradient operator $\nabla_g: C^\infty(M)\rightarrow C^\infty(M, TM)$ is defined as the dual of $d$, where $\nabla_g f$ is defined uniquely by $g(\nabla_g f, X)=df(X)$ for each $X\in TM$. Locally we have
\begin{equation}\label{A1L8}
\nabla_gf=\nabla^i f\nabla_i=\sum_{j}g^{ij}(\pd_j f)\pd_i.
\end{equation}
Define the divergence operator $\nabla_g^*: C^\infty(M, TM)\rightarrow C^\infty(M)$ as the formal $L^2_g$-adjoint of $\nabla_g$, in the sense that 
\begin{equation}
\left\langle \nabla_g^* X, f\right\rangle_{L^2_g}=\int_M g(X, \nabla_g f)~dg. 
\end{equation}
Locally we have
\begin{equation}
\nabla^*_g X=-(\sqrt{g})^{-1}\pd_i\sqrt{g}X^i,
\end{equation}
where $\sqrt{g}=\abs{\det g_{ij}}^{1/2}$. We define the Laplace-Beltrami operator $\Delta_g:C^\infty(M)\rightarrow C^\infty(M)$ as $\Delta_g=\nabla^*_g \nabla_g$, locally given by 
\begin{equation}
\Delta_g=-(\sqrt{g})^{-1}\pd_i\sqrt{g}g^{ij}\pd_j.
\end{equation}
It is a positive symmetric operator on $C^\infty(M)$. 

We follow \cite[Chapter 7]{tri10} to define the manifolds of bounded geometry. For each $p\in M$ let the exponential map at $p$, $\exp_p: T_p M\rightarrow M$ be
\begin{equation}
\exp_p(v)=\gamma(1)
\end{equation}
where $\gamma$ is the unique geodesic such that $\gamma(0)=p$ and $\gamma'(0)=v$. Note that $\exp_p(0)=p$. With a choice of the local orthonormal frame, we identify $T_pM$ by $\setr^d$ isometrically. Given $r>0$ small enough, $\exp_p$ is then a diffeomorphism from $B(0, r)\subset \setr^d$ onto $\Omega_p(r)=\exp_p(B(0, r))=B_M(p, r)\subset M$. Note that $(\Omega_p(r), \exp^{-1}_p)$ is a local cover of $M$ about $p$. We call the corresponding local coordinates geodesic normal coordinates. Note in geodesic normal coordinates about $p=\exp_p(0)$, locally we have
\begin{equation}
g_{ij}(p)=\delta_{ij},\quad \pd_kg_{ij}(p)=0, \quad \Gamma^k_{ij}(p)=0. 
\end{equation}
Let $r_p$ be the supremum of all $r$'s such that $\exp_p$ yields a diffeomorphism. Define the injectivity radius of $M$ by $r_0=\inf_{p\in M} r_p$. We say a manifold $(M,g)$ is a manifold of bounded geometry if (a) the injectivity radius is positive, that is, $r_0>0$; and (b) fixing some $0<r<r_0$, there are constants $C, C_\alpha>0$ such that for each multi-index $\alpha>0$ we have 
\begin{equation}
\det g_{ij}(p)\ge C,\quad \abs{\pd_p^\alpha g_{ij}(p)}\le C_\alpha,
\end{equation}
at each $p\in \Omega_p(r)$ where $\pd_p$ is the differentiation in the geodesic normal coordinates about itself. The uniform boundedness of all derivatives of the metric tensor is equivalent to that of the curvature tensor. See \cite{eic07}. We remark that, on manifolds of bounded geometry, uniform boundedness of derivatives of functions in one choice of geodesic normal coordinates is equivalent to that in another choice. As a result, from now on, we fix $0<r<r_0$ and the bounds on $\pd_p^\alpha u(p)$ could be discussed in arbitrary geodesic normal coordinates of which $p$ is inside. 

We can define the uniformly bounded functions on manifolds of bounded geometry, following \cite{shu92,kor91}. We call a complex-valued $C^k$ function $f: M\rightarrow \setc$ is $C^k$-bounded, denoted $f\in C^k_b(M)$, if for each multi-index $\alpha$ with $\abs{\alpha}\le k$ we have a constant $C_\alpha$ such that $\abs{\pd_p^\alpha f(p)}\le C_\alpha$ for any $p\in M$. This is equivalent to 
\begin{equation}
\abs{\nabla^j f}(p)=\left(\sum_{\alpha_1,\dots, \alpha_j} \left(\nabla^{\alpha_1}\dots \nabla^{\alpha_j}f(p)\right).\left(\nabla_{\alpha_1}\dots \nabla_{\alpha_j}\bar{f}(p)\right)\right)^{\frac12}<C_j
\end{equation}
for each $0\le j\le k$, where $\nabla^{\alpha_l}$'s and $\nabla_{\alpha_l}$'s are respectively the contravariant and covariant derivatives with respect to a local orthonormal frame of $TM$ and $\alpha_1,\dots,\alpha_j$ run through the orthonormal frame. Note that $\abs{\nabla^j f}$ does not depend on the choice of the local orthonormal frame. 
Also write $C^\infty_b(M)=\bigcap_k C^k_b(M)$. We can also define the $L^2$-based uniform Sobolev spaces. Let $H^k(M)$ be the completion of $C_c^\infty(M)$ under the norm
\begin{equation}\label{A1L4}
\left\|f\right\|_{H^k(M)}=\left(\sum_{j=0}^k \int_M \abs{\nabla^j f}^2~dg\right)^{\frac12}.
\end{equation}
Specifically, we have
\begin{equation}
\left\|f\right\|_{H^1(M)}=\left(\left\|f\right\|_{L^2(M)}^2+\left\|\nabla_g f\right\|_{L^2(M)}^2\right)^{\frac12}.
\end{equation}
We look at the partition of unity on manifolds of bounded geometry. We note that there is $\epsilon_0>0$ such that if $0<\epsilon<\epsilon_0$, then there exists a countable cover of $M$ by balls of radius $\epsilon$, say, $B(p_k, \epsilon)$, and moreover the enlarged cover $\{B(p_k, 2\epsilon)\}_k$ has a finite multiplicity. Fix some $\epsilon<r/2$, and in each ball $B(p_k, 2\epsilon)$ we can now use the geodesic normal coordinates about $x_k$. For such cover, there is a partition of unity by functions $\chi_k$, that
\begin{equation}\label{A1L2}
\sum_{k=1}^\infty \chi_k=1
\end{equation}
such that 
\begin{enumerate}[label=(\roman*)]
\item $\chi_k\ge 0$, $\chi_k\in C^\infty_c(M)$, $\supp \chi_k\subset B(p_k, 2\epsilon)$;
\item $\abs{\pd_p^\alpha \chi_k(p)}\le C_\alpha$, for each $p\in M$, in arbitrary geodesic normal coordinates, where $C_\alpha$ does not depend on $k$. 
\end{enumerate}
We say a map $f$ from $M$ to $N$, between two manifolds of bounded geometry, is $C^k$-bounded for $k\ge 1$, denoted $f\in C^k_b(M, N)$, if for each $0\le j\le k-1$ we have $C_j>0$ such that
\begin{equation}
\abs{\nabla^j df}(p)\le C_j
\end{equation}
at all $p\in M$, where $\nabla^j$ is the Levi-Civita connection on $M$ applied $j$ times. The class $C^\infty_b(M, N)$ is defined to be the intersection of all $C^k_b(M, N)$ for integers $k\ge 1$. A $C^\infty_b$-diffeomorphism on $M$ is a bijective map in $C^\infty_b(M, M)$ whose inverse is also in $C^\infty_b(M, M)$. See further details in \cite{eic07}. 

We now define the semiclassical uniform pseudodifferential operators on manifolds of bounded geometry. Those are locally semiclassical pseudodifferential operators, but with some uniform control. We start by defining the residual class, in the manner of \cite{dz19}.
\begin{definition}[Residual class]
Let an $h$-dependent operator $A:C^\infty_c(M)\rightarrow C^\infty(M)$ is said to be in the residual class of the semiclassical uniform pseudodifferential operators, denoted $A\in h^\infty \Psi^{-\infty}_u(M)$ if, its kernel $K_A\in C^\infty(M\times M)$ satisfying
\begin{equation}
\abs{\pd_{p}^\alpha\pd_{q}^\beta K_{A}(p, q)}\le C_{\alpha\beta k}h^k
\end{equation}
for each $k$ and each multi-indices $\alpha,\beta$, any $h\in (0, h_0)$, each $p,q\in \Omega_p(r)$, in arbitrary geodesic normal coordinates, where $C_{\alpha\beta k}$ does not depend on $p$ or $q$. 
\end{definition}
With the residual class we could define the semiclassical uniform pseudodifferential operators. 
\begin{definition}[Semiclassical uniform pseudodifferential operators]
We say an operator $A:C_c^{\infty}(M)\rightarrow C^{\infty}(M)$ is a semiclassical uniform pseudodifferential operator of order $m$, denoted $A\in \Psi^m_{u, h}(M)$, if 
\begin{equation}\label{A1L1}
A=\sum_{k=1}^\infty \chi_k \left(\exp^{-1}_{p_k}\right)^*\oph(a_k)\exp_{p_k}^*\chi_k +\bigo\left(h^{\infty}\Psi^{-\infty}_u\right),
\end{equation}
for some partition of unity $\{B(p_k, 2\epsilon), \chi_k\}_k$ described in \eqref{A1L2}, and $a_k\in S_u^m(\setr^d)$ is a symbol on $\setr^d$ with bounds uniformly in $k$, that
\begin{equation}
\sup_{h\in (0, h_0)}\abs{\pd_\xi^\alpha\pd_x^\beta a_k(x,\xi; h)}\le C_{\alpha\beta} \langle \xi\rangle^{m-\abs{\alpha}},
\end{equation}
for each multi-indices $\alpha,\beta$, and $C_{\alpha\beta}$ independent of $x,\xi,k$. 
\end{definition}

For each $h$-dependent $a(x,\xi; h)\in C^\infty(T^*M)$, we say it is a uniformly bounded symbol of order $m$, denoted $a\in S^m_u(T^*M)$, if for each multi-indices $\alpha,\beta$, there exists constants $C_{\alpha\beta}>0$ such that
\begin{equation}\label{A1L5}
\sup_{h\in (0, h_0)}\abs{\pd_\xi^\alpha\pd_x^\beta a(x,\xi;h)}\le C_{\alpha\beta} \langle \xi\rangle^{m-\abs{\alpha}},
\end{equation}
for all $x\in M$ and all $\xi\in T^*_x M$, in the geodesic normal coordinates near $x$, where the constants do not depend on $x, \xi$. Let $\exp_{p_k}'$ be the lifted diffeomorphism from $T^*(B(0, 2\epsilon))$ to $T^*(B(p_k, 2\epsilon))$ defined via
\begin{equation}
\exp_{p_k}'\left(x,\xi\right)=\left(\exp_{p_k}(x), \left(\exp_{p_k}^{-1}\right)^*\xi\right).
\end{equation}
and $\left(\exp_{p_k}^{-1}\right)'$ being its inverse. We have a principal symbol map such that, given $A\in \Psi^m_{u,h}(M)$, there exists a unique $\sigma_h(A)\in S_u^m(T^*M)$, where in each representation of \eqref{A1L1}, we have 
\begin{equation}
\sigma_h(A)=\sum_{k=1}^\infty \chi_k(p)^2\left(\left(\exp_{p_k}^{-1}\right)'\right)^*a_k^0,
\end{equation}
where $a_k^0\in S^m_u(\setr^d)$ is the principal part of $a_k$. The principal symbol $\sigma_h(A)$ is defined independently of representations \eqref{A1L1}. We have a quantisation map on $M$, that is, $\op:S^m_u(T^*M)\rightarrow \Psi^m_{u, h}(M)$, given by
\begin{equation}
\oph(a)=\sum_k^\infty \chi_k'\left(\exp_{p_k}^{-1}\right)^*\oph\left(\left(\exp_{p_k}'\right)^*\left(\chi_k a\right)\right)\exp^*_{p_k}\chi_k',
\end{equation}
where $\chi_k'\in C_c^\infty(B(p_k, 2\epsilon))$ is 1 on the support of $\chi_k$. Note $\oph(A)$ is a properly supported semiclassical uniform pseudodifferential operator of order $m$, and $\sigma_h(\oph(a))=a^0$, where $a^0\in S^m(T^*M)$ is the principal part of $a$. 

We define the semiclassical Sobolev spaces. Let $\langle hD\rangle^s=\oph(\ang{h\xi}^s)$. For each $s\ge0$, define $H^s_h(M)=\{f\in L^2(M): \langle hD\rangle^sf \in L^2(M)\}$ with norm
\begin{equation}
\left\|f\right\|_{H^s_h}=\left\|\langle hD\rangle^s f\right\|_{L^2}.
\end{equation}
When $h=1$, define $H^s(M)=H^s_h(M)$. We see at integer $s=k$, at fixed $h>0$, the spaces $H^s(M)$, $H^k(M)$ defined in \eqref{A1L4}, and $H^s_h(M)$ coincide, with equivalent norms. Uniformly in small $h$ we have positive constants $C^-, C^+$ for that
\begin{equation}
C^-\left(\left\|f\right\|_{L^2}+h^s\left\|f\right\|_{H^s}\right)\le \left\|f\right\|_{H^s_h}\le C^+\left(\left\|f\right\|_{L^2}+h^s\left\|f\right\|_{H^s}\right),
\end{equation}
relating the semiclassical and non-semiclassical spaces.
 
We list some essential properties of this calculus. We have $h^l\Psi_{u, h}^m(M)\subset \Psi_{u,h}^{m+l}(M)$ for each $l>0$. Note that $h^\infty \Psi_{u}^{-\infty}=\bigcap_k h^k\Psi_{u, h}^{-k}$. Each $A\in \Psi^0_{u,h}(M)$ defines a bounded operator on $L^2(M)$ and each $A\in \Psi^m_{u,h}(M)$ is bounded on $C_b^\infty(M)$ for any $m$, for fixed $h>0$. Also note each $A\in \Psi^m_{u,h}$ is bounded from $H^{k+m}$ to $H^k$, for each $k, k+m\ge 0$. The principal symbol map $\Psi^m_{u,h}(M)\rightarrow S^m_u(T^*M)$ has kernel inside $h\Psi^{m-1}_{u,h}(M)$. For each $A\in \Psi^m_{u,h}(M)$, there exists $a\in S^m_u(T^*M)$ such that
\begin{equation}
A=\oph(a)+\bigo(h^\infty\Psi_u^{-\infty}).
\end{equation}
If $A\in \Psi_{u, h}^{m}$ and $B\in \Psi_{u, h}^{l}$, we have $AB\in \Psi_{u, h}^{m+l}$ and
\begin{gather}
\sigma_h\left(AB\right)=\sigma_h(A)\sigma_h(B)\\
\sigma_h(h^{-1}[A, B])=\frac{1}{i}\left\{\sigma_h(A),\sigma_h(B)\right\}.
\end{gather}

We now discuss uniform ellipticity of those pseudodifferential operators. An operator $A\in \Psi_{u, h}^m(M)$ is called uniformly elliptic if there exists a constant $C$ such that, 
\begin{equation}
\abs{a_{k}(x,\xi; h)}\ge C \ang{\xi}^m,
\end{equation}
for each $x\in B(p_k, 2\epsilon)$ and $h\in (0,h_0)$, and $a_{k}(x,\xi; h)$ as in \eqref{A1L1}. For each $A\in \Psi^m_{u,h}(M)$ that is uniformly elliptic, there exists a parametrix $P\in \Psi_{u,h}^{-m}(M)$ such that $PA-\id, AP-\id\in h^\infty\Psi_u^{-\infty}$. We now prove a weak version of the Garding inequality in our setting. 

\begin{proposition}[Weak G\r{a}rding inequality with truncation]\label{A1T1}
Let $(M,g)$ be a manifold of bounded geometry, without boundary, and assume $W\subset M$ is a possibly empty region. If $W\neq\emptyset$, then let $W_\epsilon$ be $\{p\in M: d(p, M\setminus W)<\epsilon\}$. Given $b\in S^m_u(T^*M)$, with $m\ge 0$, such that there is uniform $\alpha>0$, for any $(x,\xi)\in T^*(M\setminus W)$, $\cre{b(x,\xi)}\ge \alpha \langle \xi\rangle^m$. Then there exists $C>0$ and for arbitrary $h$ small, for any $u\in H^m(M)$ with $u\equiv 0$ on $W$, we have
\begin{equation}
\cre{\left\langle \oph \left(b\right) u, u\right\rangle}\ge C\left(\left\|u\right\|_{L^2(M)}^2+h^{m/2}\left\|u\right\|_{H^{m/2}(M)}^2\right).
\end{equation}
\end{proposition}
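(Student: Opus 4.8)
The plan is to reduce to the standard weak Garding inequality of the uniform calculus by replacing $b$ with a symbol that is uniformly elliptic on \emph{all} of $T^{*}M$, at no cost to the quadratic form $\cre\langle\oph(b)u,u\rangle$ since $u$ vanishes on $W$. When $W=\emptyset$ there is nothing to localise: $\cre b\ge\alpha\ang{\xi}^{m}$ on all of $T^{*}M$, and one writes $\cre b-\tfrac{\alpha}{2}\ang{\xi}^{m}=r^{2}$ with $r\in S^{m/2}_{u}(T^{*}M)$ (the square root of a uniformly elliptic positive symbol remains in the class), uses $\oph(r)^{*}\oph(r)-\oph(r^{2})\in h\Psi^{m-1}_{u,h}(M)$ and $\oph(b)+\oph(b)^{*}-2\oph(\cre b)\in h\Psi^{m-1}_{u,h}(M)$, pairs with $u$, and absorbs the $\bigo(h)$-error into the leading term $\tfrac{\alpha}{2}\langle\oph(\ang{\xi}^{m})u,u\rangle$ for $h$ small; the $L^{2}$-part of the bound is then immediate and the $H^{m/2}$-part follows from the comparison between the semiclassical and classical Sobolev norms recorded in the Appendix. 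So the content is the case $W\neq\emptyset$.

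First I would push the ellipticity of $b$ a fixed distance into $W$. Since $b\in S^{m}_{u}$ with $m\ge0$, there are uniform constants $C_{0},C_{1}>0$ with $\abs{b}\le C_{0}\ang{\xi}^{m}$ and $\abs{\pd_{x}b}\le C_{1}\ang{\xi}^{m}$ in every geodesic normal chart; integrating $\pd_{x}b$ along a minimising geodesic issuing from the closed set $M\setminus W$, on which $\cre b\ge\alpha\ang{\xi}^{m}$, gives $\cre b\ge\tfrac{\alpha}{2}\ang{\xi}^{m}$ throughout the neighbourhood $W_{\delta}$, where $\delta:=\alpha/(2C_{1})$. Then I would invoke the regularity of $W$ with $\ve=\delta$ to get a cutoff $\chi_{\delta}\in C^{\infty}_{b}(M)$ with $\chi_{\delta}\equiv1$ on $W_{\delta}^{c}$ and $\supp\chi_{\delta}\subset W$, and set $\tilde b:=b+(C_{0}+\alpha)\chi_{\delta}(x)\ang{\xi}^{m}\in S^{m}_{u}(T^{*}M)$. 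A case check gives $\cre\tilde b\ge\tfrac{\alpha}{2}\ang{\xi}^{m}$ on all of $T^{*}M$: on $M\setminus W$ one has $\chi_{\delta}=0$ and $\cre b\ge\alpha\ang{\xi}^{m}$; on $W_{\delta}^{c}$ one has $\chi_{\delta}=1$, so $\cre\tilde b\ge(-C_{0}+C_{0}+\alpha)\ang{\xi}^{m}$; and on the intermediate collar $W\setminus W_{\delta}^{c}\subset W_{\delta}$ one has $\chi_{\delta}\ge0$ together with $\cre b\ge\tfrac{\alpha}{2}\ang{\xi}^{m}$ from the previous step.

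The key point is that this correction does not see $u$. Since $\chi_{\delta}(x)\ang{\xi}^{m}$ is a product of a function of $x$ alone with a symbol, the quantisation on $M$ pulls the $x$-factor out on the left, $\oph(\chi_{\delta}\ang{\xi}^{m})=\chi_{\delta}B$ with $B\in\Psi^{m}_{u,h}(M)$; and as $u\equiv0$ on $W\supset\supp\chi_{\delta}$ we have $\chi_{\delta}u\equiv0$, so $\langle\oph(\tilde b)u,u\rangle-\langle\oph(b)u,u\rangle=(C_{0}+\alpha)\langle Bu,\chi_{\delta}u\rangle=0$. Applying the $W=\emptyset$ argument to $\tilde b$ and using $\cre\langle\oph(b)u,u\rangle=\cre\langle\oph(\tilde b)u,u\rangle$ yields the claimed estimate for $h$ small.

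I expect the crux to be the second paragraph: converting the \emph{one-sided} ellipticity of $b$ on $T^{*}(M\setminus W)$ --- the region in which $u$ is concentrated --- into genuine global uniform ellipticity of a symbol that still equals $b$ on $M\setminus W$. This is precisely what the two standing hypotheses are for: the regularity of $W$ supplies the cutoff $\chi_{\delta}$, and the uniform $S^{m}_{u}$-bounds on $b$ guarantee that its ellipticity survives a fixed distance into $W$, leaving room for $\chi_{\delta}$ to switch on only where $b$ has been allowed to degenerate. The remaining ingredients --- the square-root construction in $S^{m/2}_{u}$, the composition and adjoint formulae, and the semiclassical/classical norm comparison --- are routine features of the calculus recorded in the Appendix.
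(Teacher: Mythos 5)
Your proposal is correct and follows essentially the same route as the paper: handle the global case by a G\aa rding-type square-root argument, and for $W\neq\emptyset$ use the uniform bound on $\pd_x b$ to push the ellipticity a fixed distance into $W$, add a large multiple of $\chi_\delta(x)\ang{\xi}^m$ supported in $W$ to obtain a globally uniformly elliptic symbol, and note that this correction does not change the quadratic form because $u$ vanishes on $W$. The only differences are implementation details: you take the square root of $\cre b-\tfrac{\alpha}{2}\ang{\xi}^m$ in $S^{m/2}_u$ where the paper uses the order-zero elliptic symbol $\left(\ang{\xi}^{-m}\cre b\right)^{1/2}$ together with a parametrix, and you dispose of the correction term via the exact factorisation $\oph\left(\chi_\delta\ang{\xi}^m\right)=\chi_\delta\,\oph\left(\ang{\xi}^m\right)$ of the left quantisation combined with $\chi_\delta u\equiv 0$, whereas the paper invokes $\oph\left(\ang{\xi}^m\chi\right)u=\bigo(h^\infty)$ --- your variant is in fact marginally cleaner, since it does not require $\supp\chi_\delta$ to stay at positive distance from $\supp u$.
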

\begin{proof}
1. We first show the case when $W=\emptyset$. As $\cre b\ge \alpha \langle \xi\rangle^m\ge 0$ everywhere on the cotangent bundle, let 
\begin{equation}
e(x,\xi)=(\cre\langle \xi\rangle^{-m} b(x,\xi))^{1/2}\ge \alpha^{1/2},
\end{equation}
and we see $E=\oph(e)\in \Psi^{0}_{u, h}$ is uniformly elliptic, hence there is a parametrix $P\in \Psi^0_{u, h}$ with $PE-\id\in h^\infty\Psi^{-\infty}_u$ and
\begin{equation}
\left\|w\right\|_{L^2(M)}\le C\left\|Ew\right\|_{L^2(M)}+\bigo(h^{\infty})\left\|w\right\|_{L^2(M)}
\end{equation}
where we used the fact that $P$ and $PE-\id$ are bounded on $L^2$. Therefore we have
\begin{equation}
\left\|Ew\right\|_{L^2(M)}\ge K\left\|w\right\|_{L^2(M)}
\end{equation}
uniformly for small $h$. Now for each $w\in L^2$, as $\langle \xi\rangle^{-m} \cre b$ is a symbol of order 0, 
\begin{equation}\label{A1L3}
\cre \left\langle\oph(\langle \xi\rangle^{-m} b) w, w \right\rangle =\left\langle Ew, Ew\right\rangle +\bigo\left(h\right)\left\|w\right\|^2_{L^2(M)}\ge \frac{K^2}{2}\left\|w\right\|^2_{L^2(M)}.
\end{equation}
Now let $w=\langle hD\rangle^{m/2} u$ for each $u\in L^2(M)$. Consider
\begin{equation}
\cre\left\langle \oph(b) u, u\right\rangle=\cre\left\langle \oph(\langle \xi\rangle^{-m} b) w, w\right\rangle+\bigo(h) \left\langle G u, u\right\rangle,
\end{equation}
where $G\in \Psi^{m-1}_{u, h}$. Note that
\begin{equation}\label{A1L7}
\left\langle G u, u\right\rangle=\left\langle \left\langle hD\right\rangle^{-\frac{m-1}2} G u, \left\langle hD\right\rangle^{\frac{m-1}2}u\right\rangle=\bigo\left(\left\|u\right\|_{H^{(m-1)/2}_h}^2\right)=\bigo\left(\left\|u\right\|_{H^{m/2}_h}^2\right).
\end{equation}
Together with \eqref{A1L3} we conclude
\begin{multline}\label{A1L6}
\cre\left\langle \oph(b) u, u\right\rangle\ge \left(\frac{K^2}2-\bigo\left(h\right)\right)\left\|u\right\|_{H^{m/2}_h}^2\ge \frac{K^2}4 \left\|u\right\|_{H^{m/2}_h}^2\\
\ge C\left(\left\|u\right\|_{L^2}^2+h^{m/2}\left\|u\right\|_{H^{m/2}}^2\right)
\end{multline}
as claimed.

2. Now take $W$ as described in the statement of this proposition. As $b\in S^m_u(T^*M)$, defined as in \eqref{A1L5}, we have a global constant $C_1>0$ such that
\begin{equation}
\abs{\nabla_x b(x,\xi)}\le C_1\left\langle \xi\right\rangle^m
\end{equation}
at each $x\in M$ in geodesic normal coordinates. Therefore there exist small $\epsilon>0$ such that $\cre b(x,\xi)\ge (\alpha/2) \left\langle \xi\right\rangle^m$ on $W_\epsilon=\{p\in M: d(p, M\setminus W)<\epsilon\}$. There exists a cutoff $\chi_\epsilon\in C^\infty_b(M)$ such that $\chi_\epsilon\equiv 1$ on $W_\epsilon^c$ and supported in $W$. As $b\in S^m_u(T^*M)$ we have a global constant $C_2>0$ such that
\begin{equation}
b(x,\xi)\ge- C_2\left\langle \xi\right\rangle^m
\end{equation}
at each $x\in M$ in geodesic normal coordinates. Now set 
\begin{equation}
b'(x,\xi)=b(x,\xi)+2C_2\left\langle \xi\right\rangle^m\chi(x).
\end{equation}
Note that now $\cre b'\ge \alpha'\langle \xi\rangle^m$ for some $\alpha'>0$, everywhere on $T^*M$. Apply what we have obtained in Step 1. We know from \eqref{A1L6} that
\begin{equation}
\cre\left\langle \oph(b') u, u\right\rangle\ge \left(\frac{K^2}2-\bigo\left(h\right)\right)\left\|u\right\|_{H^{m/2}_h}^2\ge \frac{K^2}4 \left\|u\right\|_{H^{m/2}_h}^2. 
\end{equation}
Note that
\begin{equation}
\oph{b'}=\oph{b}+2C_2\oph{\left(\left\langle \xi\right\rangle^m\chi(x)\right)}+\bigo(h)G
\end{equation}
for $G\in \Psi^m_{u, h}(M)$. As $\left\langle \xi\right\rangle^m\chi(x)\equiv 0$ on $W^c$, hence vanishes on $\supp{u}$, and
\begin{equation}
\oph{\left(\left\langle \xi\right\rangle^m\chi(x)\right)}u=\bigo (h^\infty).
\end{equation}
Therefore for small $h$ we have
\begin{multline}
\cre\left\langle \oph(b) u, u\right\rangle\ge \cre\left\langle \oph(b') u, u\right\rangle-\bigo(h)\left\langle Gu, u\right\rangle\ge \frac{K^2}{8}\left\|u\right\|_{H^{m/2}_h}^2\\
\ge C\left(\left\|u\right\|_{L^2}^2+h^{m/2}\left\|u\right\|_{H^{m/2}}^2\right),
\end{multline}
as a result of \eqref{A1L7}.
\end{proof}

% \nocite{*}
\bibliography{robib}
\bibliographystyle{amsalpha}

\end{document}